\definecolor{blue}{rgb}{0,0,0.8}
\definecolor{darkblue}{rgb}{0,0,0.4}
\definecolor{darkred}{rgb}{0.4,0,0}
\definecolor{darkgreen}{rgb}{0,0.4,0}
\newcommand{\con}{{\rm con}\,}
\newcommand{\PP}{{\mathcal P}}
\newcommand{\ze}{\zeta}
\newcommand{\zs}{\zeta(s)}
\newcommand{\DD}{{\mathcal D}}
\newcommand{\GG}{{\mathcal G}}
\newcommand{\G}{{\mathcal G}}
\newcommand{\A}{{\mathcal A}}
\newcommand{\AAA}{{\mathcal A}}
\newcommand{\C}{{\mathcal C}}
\newcommand{\Cn}{{\mathcal C}_0}
\newcommand{\Co}{{\mathcal C}_1}
\newcommand{\HH}{{\mathbb H}}
\newcommand{\rks}{{\rho}_k^*}
\newcommand{\zz}{\mbox{\boldmath$z$}}
\newcommand{\ww}{\mbox{\boldmath$w$}}
\newcommand{\Z}{{\mathcal Z}}
\newcommand{\RR}{{\mathbb R}}
\newcommand{\CC}{{\mathbb C}}
\newcommand{\NN}{{\mathbb N}}
\newcommand{\N}{{\mathcal N}}
\newcommand{\R}{{\mathcal R}}
\newcommand{\MM}{{\mathcal M}}
\newcommand{\Ds}{{D(\eta)^*}}
\newcommand{\D}{{\Delta}}
\newcommand{\al}{{\alpha}}
\newcommand{\ve}{{\varepsilon}}
\newcommand{\de}{\delta}
\newcommand{\si}{\sigma}
\newcommand{\ff}{\varphi}
\newcommand{\vp}{{\varepsilon}'}
\newcommand{\ot}{\widetilde{\omega}}
\newcommand{\be}{\beta}
\newcommand{\ga}{\gamma}
\newcommand{\om}{\omega}
\newcommand{\oet}{\omega_\eta}
\newcommand{\zp}{\zeta_{\mathcal P}}
\newcommand{\Dp}{\Delta_{\mathcal P}}
\renewcommand{\refname}{}
\newtheorem{theorem}{Theorem}
\newtheorem{corollary}{Corollary}
\newtheorem{lemma}{Lemma}
\newtheorem{proposition}{Proposition}
\newtheorem{definition}{Definition}
\theoremstyle{definition}
\newtheorem{remark}{Remark}
\reversemarginpar \setlength{\marginparwidth}{2.5in}
\begin{document}

\title[The Ingham question on the order of the PNT in the Beurling context]{The method of Pintz for the Ingham question about the connection of distribution of $\zeta$-zeroes and order of the error in the PNT in the Beurling context}

\author{Szil\' ard Gy. R\' ev\' esz}

\maketitle

\begin{abstract} We prove two results, generalizing long existing knowledge regarding the classical case of the Riemann zeta function and some of its generalizations. These are concerned with the question of Ingham who asked for optimal and explicit order estimates for the error term $\Delta(x):=\psi(x)-x$, given any zero-free region $\DD(\eta):=\{s=\si+it\in\CC~:~ \si:=\Re s \ge 1-\eta(t)\}$. In the classical case essentially sharp results are due to some 40 years old work of Pintz.

Here we consider a given system of Beurling primes $\PP$, the generated arithmetical semigroup $\GG$, the corresponding integer counting function $\N(x)$, and the respective error term $\Dp(x):=\psi_{\PP}(x)-x$ in the PNT of Beurling, where $\psi_{\PP}(x)$ is the Beurling analog of $\psi(x)$. First we prove that if the Beurling zeta function $\zp$ does not vanish in $\DD(\eta)$, then the extension of Pintz' result holds:
$|\Dp(x)| \le x\exp((1+\ve)\omega_\eta(\log x))~(x>x_0(\ve))$,
where $\omega_\eta(y):={\mathcal{ L}}(\eta)(y)$ is the naturally occurring conjugate function--essentially the Legendre transform--of $\eta(t)$, introduced into the field by Ingham.
In the second part we prove a converse: if $\zp$ has an infinitude of zeroes in the given domain, then analogously to the classical case, $|\Dp(x)| \ge x\exp((1-\ve)\omega_\eta(\log x))$
holds "infinitely often". This also shows that both main results are sharp apart from the arbitrarily small $\ve>0$.


The  classical results of Pintz used many facts about the Riemann zeta function. Recently we worked out a number of analogous results--including some construction of quasi-optimal integration paths, a Riemann-von Mangoldt type formula, a Carlson-type density theorem and a Turán type local density theorem--for the Beurling context, too. These, together with Turán's power sum theory, all play some indispensable role in deriving the main results of the paper.
\end{abstract}

{\bf MSC 2020 Subject Classification.} Primary 11N80, 11M41; Secondary 11F66, 11M45, 30B40, 30B50, 30C15.

{\bf Keywords and phrases.} {\it Beurling zeta function, analytic continuation, arithmetical semigroups, Knopfmacher's Axiom A, Beurling prime number theorem, zero of the Beurling zeta function, oscillation of remainder term, Mellin transform.}

\medskip
{\bf Author information.} Alfréd Rényi Institute of Mathematics\\
Reáltanoda utca 13-15, 1053 Budapest, Hungary \\
{\tt revesz.szilard@renyi.hu}


\section{Introduction}
In Beurling's theory of generalized integers and primes, $\G$ is a unitary, commutative semigroup, with a countable set $\PP$ of indecomposable generators, called the \emph{primes} of $\G$, which freely generate\footnote{In exact terms this means that any element $g\in \G$ can be uniquely written (up to order of
terms) in the form $g=p_1^{k_1}\cdot \dots \cdot p_m^{k_m}$: two (essentially) different such expressions are necessarily different as elements of $\G$, while each element has
its (essentially) own unique prime decomposition.} the whole of $\G$.

Moreover, there is a \emph{norm} $|\cdot|~: \G\to \RR_{+}$ such that
the following hold. First, the image of $\G$, $|\G|\subset \RR_{+}$
is \emph{locally finite} 
in the sense that any finite interval of $\RR_{+}$ can contain the norm of only a finite number of elements of $\G$; thus the
function
\begin{equation}\label{Ndef}
{\N}(x):=\# \{g\in \G~:~ |g| \leq x\}
\end{equation}
exists as a finite, nondecreasing, right continuous, nonnegative integer valued
function on $\RR_{+}$. Second, the norm is multiplicative, i.e. $|g\cdot h| = |g| \cdot
|h|$; it follows that for the unit element $e$ of $\G$ $|e|=1$, and
that all other elements $g \in \G$ have norms strictly larger than 1.


In this paper we will assume throughout\footnote{We indeed assume Axiom A even wherever it is not stated explicitly. Without a meromorphic continuation of $\zeta(s)$ all our analysis--in particular referring to zeroes of $\zeta(s)$--would be void, thus without Axiom A we should have assumed other conditions to ensure meromorphic continuity. Moreover, it is well-known that in case Axiom A fails to hold, a meromorphic continuation, even if it may exist, but can behave rather wildly. E.g. if the primes are well-behaved, then $\zeta(s)$ must have infinite order of growth \cite{H-7}. A construction of very well-behaved primes with widely oscillating integer distribution (and thus with $\zeta(s)$ having infinite order left to $\Re s=1$) is worked out in \cite{BrouckeDebruyneVindas}. Moreover, according to an effective Ikehara theorem \cite{Aramaki}, the following is true: if $\zs(s-1)$ has an analytic continuation to $\Re s >\theta$ and admits a polynomial order bound $|\zs|\ll (1+|s|)^M$ ($|\Im s|\ge 1$), then $\N(x)=\kappa x + O(x^q)$, with some $q<1$ (irrespective of any condition on the behavior of the prime counting function). In other words, \emph{not having Axiom A} with some $q<1$ necessarily entails infinite order of growth. This of course makes all the technical lemmas and estimates--which are very crucially used here--impossible to get. This should well explain why we restrict ourselves to this condition holding everywhere in this work.} the so-called \emph{"Axiom A"} of Knopfmacher\footnote{In the terminology of Hilberdink \cite{H-5}, such conditions are termed as being "well-behaved", e.g. the integers here.}, see pages 73-79 of \cite{Knopf}.

\begin{definition} ${\N}$ satisfies \emph{Axiom A} -- more precisely, Axiom
$A(\kappa,\theta)$ with the constants $\kappa>0$ and
$0<\theta<1$ -- if we have\footnote{The usual formulation uses the more natural version $\R(x):= \N(x)-\kappa x$. However, our version is more convenient with respect to the initial values at 1, as we here have $\R(1-0)=0$. All respective integrals of the form $\int_1$ will be understood as integrals from $1-0$, and thus we can avoid considering endpoint values in the respective partial integration formulae. Alternatively, we could have taken also $\N(x)$ left continuous: also with this convention we would have $\R(1)=0$.} for the remainder term $\R(x):= \N(x)-\kappa (x-1)$ the estimate
\begin{align}\label{Athetacondi}
\left| \R(x) \right|  \leq A x^{\theta} \quad (\kappa, A > 0, ~ 0<\theta<1 ~ \textrm{constants}, ~ x \geq 1 ~ \textrm{arbitrary}).
\end{align}
\end{definition}
The Beurling zeta function is defined as the Mellin transform of $\N(x)$, i.e.
\begin{equation}\label{zetadef}
\ze(s):=\ze_{\G}(s):=\MM(\N)(s):=\int_1^{\infty} x^{-s} d\N(x) = \sum_{g\in\G} \frac{1}{|g|^s}.
\end{equation}
If only $\N(x)=O(x^C)$, the series converges absolutely and locally uniformly in the halfplane $\Re s> C$, moreover, its terms can be rearranged to provide the \emph{Euler product formula}
\begin{equation}\label{Euler}
\ze_{\G}(s)=\prod_{p\in\PP} \left(\frac{1}{1-|p|^{-s}}\right).
\end{equation}
In particular, if $\N(x)=O(x^{1+\ve})$ for all $\ve>0$, then $\zeta_\GG$ is absolutely convergent in $\Re s>1$, it cannot vanish there--as is clear from \eqref{Euler}--moreover, $|\zs| \ge 1/\zeta(\si)$ ($\si:=\Re s$). Furthermore, under Axiom A it admits a meromorphic, essentially analytic continuation
$\kappa\frac{1}{s-1}+\int_1^{\infty} x^{-s} d\R(x)$ up to $\Re s >\theta$
with only one simple pole at 1. For an analysis of the finer behavior of the number of primes $\pi_{\PP}(x):=\sum_{p\in \PP;~|p|\le x} 1$--as in the classical case of $\GG=\NN$--the location of the zeroes of $\zs$ in the "critical strip" $\theta<\Re s\le 1$ is decisive, as we will see.

For a Beurling system 
the generalized von Mangoldt function is
\begin{equation}\label{vonMangoldtLambda}
\Lambda (g):=\Lambda_{\G}(g):=\begin{cases} \log|p| \quad \textrm{if}\quad g=p^k,
~ k\in\NN ~~\textrm{with some prime}~~ p\in\G\\
0 \quad \textrm{if}\quad g\in\G ~~\textrm{is not a prime power in} ~~\G
\end{cases}.
\end{equation}
These appear also as the coefficients of the logarithmic derivative of the Beurling zeta function:
\begin{equation}\label{zetalogder}
-\frac{\zeta'}{\zeta}(s) = \sum_{g\in \G} \frac{\Lambda(g)}{|g|^s}.
\end{equation}

The Beurling theory of generalized primes investigates the summatory function
\begin{equation}\label{psidef}
\psi(x):=\psi_{\G}(x):=\sum_{g\in \G,~|g|\leq x} \Lambda (g).
\end{equation}
The asymptotic relation $\psi(x)\thicksim x$ is equivalent to say that $\pi(x):=\sum_{p\in\PP,\, |p|\le x}1$ satisfies $\pi(x) \thicksim {\rm li}(x):=\int_2^x du/\log u$ or $\pi(x) \thicksim x/\log x$, and is thus termed as \emph{the Prime Number Theorem} (PNT). Equivalently, we can also formulate this by use of the "error term in the prime number formula", for which the standard notation is
\begin{equation}\label{Deltadef}
\Delta(x):=\Delta_{\G}(x):=\psi(x)-x.
\end{equation}
Then PNT is thus the statement that $\Delta(x)=o(x)$. The so-called "Chebyshev bounds" $x \ll \psi(x) \ll x$ mean that there are constants $0<c<C<\infty$ with $cx \le \psi(x) \le Cx$. Much study was devoted to describe, what conditions are necessary resp. sufficient for the PNT or the Chebyshev bounds to hold in the generality of Beurling systems \cite{Beur, DebruyneVindas-PNT, DSV, DZ-17, K-98, DZ-13-3, DZ-13-2, Vindas12, Vindas13, Zhang15-IJM, Zhang15-MM, Zhang93}. Another much studied question, which simply does not arise in the classical case, is density and asymptotics of $\N(x)$ \cite{Diamond-77, K-17, DebruyneVindas}. Still another direction, going back to Beurling himself, is the study of analogous questions in case the assumption of Axiom A is weakened to e.g. an asymptotic condition on $\N(x)$ with a product of $x$ and a sum of powers of $\log x$, or a sum of powers of $\log x$ perturbed by almost periodic polynomials in $\log x$, or $N(x)-cx$ being periodic, see \cite{Beur, H-12, RevB, Zhang93}.

There are other studies related to the Beurling PNT in the literature. In particular, some rough (as compared to our knowledge in the prime number case) estimates and equivalences were worked out in the analysis of the connection between $\zeta$-zero distribution and the behavior of the error term $\Delta(x)$. One of the deep results in this direction is the extension of the classical oscillation result $\pi(x)-{\rm li}(x) =\Omega_{\pm}(\sqrt{x}\log\log\log x/\log x)$ of Littlewood \cite{Littlewood-CR} to the Beurling context \cite{K-99}. Further, so-called $(\alpha,\beta)$ systems and $(\alpha,\beta,\gamma)$ systems were defined \cite{H-20}, with these parameters denoting the "best possible" exponents in estimating the error terms $\R(x), \Delta(x)$ and the summatory function $M_{\GG}(x)$ of the Beurling version of the M\"obius function $\mu_{\GG}(g)$; in particular, it is known that the two largest of these three parameters have to be at least 1/2 and must match \cite{H-5}, \cite{H-20}. Oscillation order of the generalized M\"obius summatory function and even more general arithmetical functions are also treated up to recent times \cite{H-10, DZ-17, DebruyneDiamondVindas, BrouckeVindas, PintzMP}.

The interest in the Beurling theory was greatly boosted by a construction of Diamond, Montgomery and Vorhauer \cite{DMV}. They basically showed that under Axiom A
the Riemann hypothesis may still fail; moreover, nothing better
than the most classical \cite{V} zero-free region and error term  of
\begin{equation}\label{classicalzerofree}
\zeta(s) \ne 0 \qquad \text{whenever}~~~ s=\sigma+it, ~~ \sigma >
1-\frac{c}{\log t},
\end{equation}
and
\begin{equation}\label{classicalerrorterm}
\psi(x)=x +O(x\exp(-c\sqrt{\log x}))
\end{equation}
follows from \eqref{Athetacondi} at least if $\theta>1/2$.

Therefore, e.g. Vinogradov estimates and many other technology cannot prevail in this generality,
and for Beurling zeta functions only a restricted variety of arguments can be implemented.

After the Diamond-Montgomery-Vorhauer paper \cite{DMV}, better and better examples were constructed for arithmetical semigroups with very "regular" and very "irregular" behavior on the zero- or prime distribution, see e.g. \cite{H-15}, \cite{BrouckeDebruyneVindas}, \cite{DMV}, \cite{H-5}, \cite{Zhang7}. For an analysis of these directions as well as for much more information the reader may consult the monograph \cite{DZ-16}.

In sum, in contrast with the classical natural number system, when it is generally believed that the Riemann Hypothesis holds true, in the generality of arithmetical semigroups many different scenarios occur. It is all the more natural to extend the questions, originally posed in the classical case by Littlewood \cite{Littlewood-JLMS} and of Ingham \cite{Ingham-AA}, what \emph{explicit, effective} conclusions can be drawn for the oscillation of the error term $\Delta(x)$ from the existence of a given $\zeta$-zero, or from the existence of an infinitude of $\zeta$-zeroes within a given domain, and, conversely, what can be the best order estimates assuming a given zero-free region. However, to date, only a few attempts were made to \emph{generally} describe the intimate correspondence between the location of the zeroes of the Beurling zeta function $\zeta$ on the one hand and order estimates or oscillation results for the remainder term $\Delta(x)$ on the other hand.

In the classical case of the Riemann zeta function the problem of Littlewood was first answered by Turán \cite{Turan1}, based on his celebrated power sum method  \cite{Turan}, \cite{SosTuran}. The Turán result then was sharpened in several steps \cite{Stas0}, \cite{Knapowski}, \cite{Pintz1}, \cite{RevAA}, and were extended to various more general contexts, in particular to the case of prime ideals of algebraic number fields, see \cite{Stas-Wiertelak-1, Stas1, Rev1, RevAA}. These effective results also furnished some localizations, where the large oscillations should occur, while related works \cite{PintzBasel, PintzNoordwijkerhout, Schlage-Puchta, PintzProcStekl} produced various versions where the sharpness of the estimate was a little bit sacrificed in exchange for sharper localizations, a trade-off rather characteristic in these results.

Let us describe in more detail the development around Ingham's problem, as it will be our main topic here for the Beurling case. We start with upper bounds following from localization of the set of zeroes of the Riemann $\zeta$ function. Denote by $\eta(t):[0,\infty)\to [0,1/2]$ a nonincreasing function and consider the domain
\begin{equation}\label{eq:etazerofree}
\DD(\eta):=\{ s=\sigma+it \in\CC~:~ \sigma>1-\eta(|t|),~ t \in \RR\}.
\end{equation}
Ingham--see the last line of page 62 in \cite{Ingham}--defined the function $\om_\eta$ formally\footnote{This can as well be expressed by means of the classical Legendre transform of a function:
$$
\min_{v:=\log t\ge 0} \eta(e^v) u + v =\min_{w:=\phi(v):=-\eta(e^v)} -w u+ \phi^{-1}(w)=-\sup_{w} wu-\phi^{-1}(w)=:-{\mathcal L}(\phi^{-1})(u).
$$
}
as
\begin{equation}\label{eq:omegaetadef}
\om_\eta(x):=\min_{t\ge 1} \eta(t)\log x + \log t=\min_{v:=\log t\ge 0} \eta(e^v) u + v,
\end{equation}
where here we also wrote in the last form the logarithmic variables $v:=\log t$ and $u:=\log x$. It will be called Ingham's function corresponding to $\eta$. With this, Ingham showed the following \cite{Ingham-AA}.
\begin{theorem}[Ingham]\label{th:Ingham} Assume that for the Riemann zeta function $\zs\ne 0$ in the domain $\DD(\eta)$ in \eqref{eq:etazerofree}, i.e., for $\Re s \ge 1-\eta(|t|)$, where $s=\si+it$ and $\eta:[0,\infty) \to [0,1/2]$ is a strictly decreasing continuously differentiable function with $1/\eta(t)=O(\log t)$ and $\eta'(t)$ tending to 0 with $t\to \infty$.

Then for arbitrary fixed $\ve>0$ we have $|\D(x)| =O(x\exp(-(1/2-\ve)\om_\eta(x)))$.
\end{theorem}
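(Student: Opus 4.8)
The plan is to follow the classical de~la~Vall\'ee~Poussin-type route: a truncated Perron formula, a deformation of the line of integration into the zero-free region $\DD(\eta)$, and a final optimization of the truncation height $T$ --- from which both Ingham's function $\om_\eta$ and, in particular, the constant $1/2$ will come out. First I would fix a generic $x\ge 2$ (say a half-integer; the restriction is immaterial), put $c:=1+1/\log x$, and keep a truncation height $T\in[2,x]$ free for the moment. The truncated Perron formula gives
\[
\psi(x)=\frac{1}{2\pi i}\int_{c-iT}^{c+iT}\Big(-\frac{\ze'}{\ze}(s)\Big)\frac{x^s}{s}\,ds+O\Big(\frac{x\log^2 x}{T}\Big).
\]
Next I would fix a small $\de>0$ and set $\eta_1:=(1-\de)\eta$; then the curve $\si=1-\eta_1(|t|)$ lies strictly inside $\DD(\eta)$, at distance $\asymp\eta(|t|)$ from its boundary. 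Deforming the segment $\Re s=c$ (truncated at $\pm iT$) to the contour $\Gamma_T$ formed by the two horizontal pieces $t=\pm T$, $1-\eta_1(T)\le\si\le c$, together with the curved piece $\si=1-\eta_1(|t|)$, $|t|\le T$, the only singularity of $-(\ze'/\ze)(s)\,x^s/s$ in between is the simple pole at $s=1$ of residue $x$; hence
\[
\D(x)=\psi(x)-x=\frac{1}{2\pi i}\int_{\Gamma_T}\Big(-\frac{\ze'}{\ze}(s)\Big)\frac{x^s}{s}\,ds+O\Big(\frac{x\log^2 x}{T}\Big).
\]
Here I use that $1-\eta_1(|t|)>\tfrac12$ for every $t$ (as $\eta\le\tfrac12$), so $\Gamma_T$ never meets $\Re s=0$, and that $\eta'(t)\to0$ keeps the curved arc almost vertical, hence rectifiable with $|ds|\asymp dt$ on it.

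The heart of the matter is the bound for $\ze'/\ze$ just inside $\DD(\eta)$. On the curved arc, for $|t|\ge t_0$, I would apply the standard Borel--Carath\'eodory / partial-fraction argument to $\log\ze$ on a disc about $2+it$ reaching just past the line $\si=1-\eta(|t|)$ --- admissible because there $\ze$ is zero-free and pole-free, while $\log|\ze(s)|\ll\log|t|$ by the convexity bound --- obtaining
\[
\Big|\frac{\ze'}{\ze}\big(1-\eta_1(|t|)+it\big)\Big|\ll_{\de}\frac{\log|t|}{\eta(|t|)}\ll_{\de}(\log|t|)^{2},
\]
the last step being $1/\eta(t)=O(\log t)$. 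It is exactly at this point that $\eta'(t)\to0$ is indispensable: it guarantees that a disc of radius $\asymp\eta(|t|)$ about $2+it$ stays zero-free, i.e.\ that $\DD(\eta)$ is locally almost vertical. On the horizontals I may assume, after moving $T$ by at most $1$, that $T$ stays at distance $\gg 1/\log T$ from every zero ordinate, so the usual partial-fraction bound gives $|(\ze'/\ze)(\si\pm iT)|\ll(\log T)^{2}$ uniformly for $1-\eta_1(T)\le\si\le2$. The remaining part $|t|\le t_0$ of the arc I bound trivially; it contributes $\ll x^{1-\eta_1(t_0)}\ll x^{1-c_0}$ for a fixed $c_0>0$, which is negligible.

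Assembling these, I obtain
\[
|\D(x)|\ll x^{1-c_0}+\frac{x\log^2 x}{T}+x\int_1^{T}x^{-\eta_1(t)}\,\frac{(\log t)^2}{t}\,dt .
\]
Since $\eta_1$ is nonincreasing, $x^{-\eta_1(t)}$ is increasing in $t$, so the integral is $\ll x^{-\eta_1(T)}(\log T)^3$ and (discarding the harmless $x^{1-c_0}$)
\[
|\D(x)|\ll x\exp\big(-\log T+O(\log\log x)\big)+x\exp\big(-\eta_1(T)\log x+O(\log\log T)\big).
\]
The first term decreases and the second increases in $T$, so I choose the (unique) $T$ with $\log T=\eta_1(T)\log x=:M$; since $\eta_1(t)\ge c'/\log t$ forces $M\ge\sqrt{c'\log x}$, the $O(\log\log x)$ losses are $o(M)$ and
\[
|\D(x)|\ll x\exp\big(-(1+o(1))M\big).
\]
Finally, the definition \eqref{eq:omegaetadef} of Ingham's function applied to $\eta_1$ at $t=T$ gives $\om_{\eta_1}(x)\le\eta_1(T)\log x+\log T=2M$, whence $M\ge\tfrac12\om_{\eta_1}(x)\ge\tfrac12(1-\de)\om_\eta(x)$ (using $\eta_1=(1-\de)\eta$ and $\log t\ge0$); taking $\de<\ve$ and $x$ large therefore yields $|\D(x)|\ll x\exp\big(-(\tfrac12-\ve)\om_\eta(x)\big)$, as asserted.

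The one genuinely nonroutine ingredient is the uniform bound for $\ze'/\ze$ along the $t$-dependent curve $\si=1-\eta_1(|t|)$: controlling the local shape of $\DD(\eta)$ --- where $\eta'(t)\to0$ enters --- is the real work, everything else being standard analytic-number-theory bookkeeping. The structural reason for the loss of a factor $2$ is the two-term inequality $\om_{\eta_1}(x)\le\eta_1(T)\log x+\log T=2M$: at the balance point where the truncation error and the shifted integral are comparable one only controls the \emph{smaller} of the two summands making up $\om_\eta$, so this method cannot reach beyond $x\exp(-\tfrac12\om_\eta(x))$ --- it is exactly this gap that Pintz's method, in the sequel, is designed to close.
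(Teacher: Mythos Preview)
The paper does not prove Theorem~\ref{th:Ingham}; it is stated as Ingham's classical result with a reference to \cite{Ingham-AA}, serving purely as historical background before Pintz's sharpening (Theorem~\ref{th:domainesti}) and the paper's own Beurling generalizations. So there is no ``paper's own proof'' to compare against.

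Your reconstruction of Ingham's argument is essentially correct and follows the classical route: truncated Perron, contour deformation to the curve $\si=1-(1-\de)\eta(|t|)$, a Borel--Carath\'eodory bound for $\ze'/\ze$ along that curve, and the balancing $\log T=\eta_1(T)\log x$ which produces the factor $1/2$ via $\om_{\eta_1}(x)\le 2M$. Your closing diagnosis of why the method loses this factor is exactly the point Pintz identified and the paper exploits. One small imprecision: you speak of a ``disc of radius $\asymp\eta(|t|)$ about $2+it$'' staying zero-free, but to reach $\Re s=1-\eta_1(|t|)$ from $\Re s=2$ you need radius $>1$; what you actually want is either a disc of radius $\asymp\eta(|t|)$ centred near $1+it$, or the usual larger disc about $2+it$ whose intersection with the strip $\Re s<1$ has width $\asymp\eta(|t|)$. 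The role of $\eta'(t)\to0$ is then to ensure $\eta(t')\sim\eta(t)$ for $|t'-t|\ll 1$, so the disc stays inside $\DD(\eta)$. This does not affect the validity of the argument.
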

While in the opposite direction there were several results \cite{Stas2}, \cite{Turan2} it was Pintz who observed that Ingham's Theorem \ref{th:Ingham} involves a loss\footnote{In view of the necessary technicalities, we shall explain only later in the sequel why one should optimally expect $\approx x\exp(-\om_\eta(x))$ in Theorem \ref{th:Ingham}. In fact, for the integrated function $\Psi_1(x):=\int_1^x \psi(y)dy$ also Ingham obtained the "right order error", proving $\Psi_1(x)=\frac12 x^2+O(x^2\exp(-(1-\ve)\om_\eta(x)))$ in Theorem 21 on the bottom of page 62 of \cite{Ingham}. However, to deduce error estimates for $\psi(x)$ itself from formulae for $\Psi_1$ incurred some losses.} of a factor $1/2$ in the exponent, so that to have both ways sharp results one needs to improve upon that, too. Subsequently he indeed could prove the improved result in \cite{Pintz2}, while allowing more general functions than Ingham could.
\begin{theorem}[Pintz]\label{th:domainesti} Let $\eta: [0,\infty)\to [0,1/2]$ be a continuous nonincreasing function.

If $\zeta(s)\ne 0$ in $\DD(\eta)$, then for arbitrary $\ve>0$ we have $\Delta(x)=O(x\exp(-(1-\ve)\omega_\eta(x))).$
\end{theorem}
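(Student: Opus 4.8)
I would follow Pintz's strategy, built on three pillars: a sharp truncated explicit formula for $\psi$, a Carlson-type zero-density estimate, and the monotonicity of $\psi$.

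\emph{Step 1 (truncated explicit formula).} Start from the Riemann--von Mangoldt formula
\begin{equation*}
\psi(x)=x-\sum_{|\Im\rho|\le T}\frac{x^{\rho}}{\rho}+O\!\Big(\frac{x\log^{2}(xT)}{T}\Big)\qquad(x\ \text{away from prime powers}),
\end{equation*}
and set $T:=\exp\!\big((1-\ve/2)\,\om_\eta(x)\big)(\log x)^{2}$. Since $\eta\le 1/2$, taking $t=1$ in \eqref{eq:omegaetadef} gives $\om_\eta(x)\le\tfrac12\log x$, so $T\le x^{1-\ve/4}$ is an honest truncation and the error term is already $\ll x\exp\!\big(-(1-\ve/2)\om_\eta(x)\big)$, which is admissible.

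\emph{Step 2 (the sum over zeros).} The heart of the matter is
\begin{equation*}
\sum_{|\Im\rho|\le T}\frac{x^{\Re\rho}}{|\rho|}\ \ll\ x\exp\!\big(-(1-\ve)\,\om_\eta(x)\big).
\end{equation*}
Since $\zs\ne0$ on $\DD(\eta)$, every zero has $\Re\rho\le 1-\eta(|\Im\rho|)$, so \eqref{eq:omegaetadef} makes each summand at most $x\exp(-\om_\eta(x))$; but there are $\asymp T\log T$ zeros up to height $T$, so the triangle inequality alone only yields $x\exp(-\ve\om_\eta(x))$ up to logs -- exactly the deficiency that forces the factor $\tfrac12$ in Ingham's Theorem~\ref{th:Ingham}. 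The remedy is the Carlson density theorem $N(\sigma,T)\ll T^{A(1-\sigma)}(\log T)^{B}$: decompose dyadically in $|\Im\rho|$, integrate by parts in $\Re\rho$ against $-\,dN(\cdot,2^{j})$, and use the density bound for $\Re\rho\ge\tfrac12$ and the trivial count below. The block at height $|\Im\rho|\asymp 2^{j}\le T$ then contributes at most $x^{\,1-\eta(2^{j})}\cdot 2^{\,j(2\eta(2^{j})-1)/(1+\eta(2^{j}))}(\log x)^{O(1)}$: the first factor is small when $\eta(2^{j})$ is large, the second when $\eta(2^{j})$ is small, and together they are maximised on the block nearest the ``balance height'' $t^{*}$ realising the minimum in \eqref{eq:omegaetadef}, where they combine to the sharp $x\exp(-\om_\eta(x))$. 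Summing the $O(\log x)$ blocks and absorbing polynomial-in-$\log x$ losses into $\ve$ finishes the step. \textbf{I expect this balancing of the density exponent $A$, the arbitrary profile $\eta$, and the truncation level $\exp((1-\ve)\om_\eta(x))$ to be the main obstacle;} the bound $\om_\eta(x)\le\tfrac12\log x$ is what keeps it feasible for every nonincreasing $\eta$.

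\emph{Step 3 (from averaged to pointwise).} Steps~1--2 give $|\Delta(x)|\ll x\exp(-(1-\ve)\om_\eta(x))$ for $x$ away from prime powers. For the remaining $x$ I invoke the monotonicity of $\psi$: sandwiching $\psi(x)$ between $\tfrac1{\delta x}\int_{x}^{x(1+\delta)}\psi$ and $\tfrac1{\delta x}\int_{x(1-\delta)}^{x}\psi$ with $\delta\asymp\exp(-(1-\ve)\om_\eta(x))$ costs only $O(\delta x)$, of the target order; equivalently one runs Steps~1--2 for a mollified $\psi$ and de-mollifies, noting that the jump of $\psi$ at a prime power is $O(\log x)=o\big(x\exp(-(1-\ve)\om_\eta(x))\big)$ because $\om_\eta(x)\le\tfrac12\log x$. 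In the Beurling context of the paper the classical explicit formula, density theorem and pointwise bounds for $\zeta'/\zeta$ on the integration path would be replaced by the quasi-optimal integration paths, the Riemann--von Mangoldt type formula, and the Carlson-type and Tur\'an-type (local) density theorems developed there, but the skeleton is the same.
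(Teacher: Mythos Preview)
Your strategy---truncated explicit formula plus a Carlson-type density bound---is correct and is exactly what the paper does for the Beurling generalisation (Theorem~\ref{th:Beurlingdomainesti}; the classical statement itself is only cited to \cite{Pintz2}). But your Step~2 is more convoluted than necessary, and the displayed contribution $x^{1-\eta(2^{j})}\cdot 2^{\,j(2\eta(2^{j})-1)/(1+\eta(2^{j}))}$ is neither derived nor obviously correct.

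The paper's execution (Proposition~\ref{p:Zomega}) avoids any integration by parts in $\Re\rho$. It fixes a \emph{single} threshold $\alpha=1-4\vp$ with $\vp=\ve/K$, where $K$ is a constant depending only on the density exponent. Zeros with $\beta<\alpha$ contribute $O(x^{\alpha+\vp})$ by the crude Riemann--von Mangoldt count (Lemma~\ref{l:ZerosetZ}); for $\beta\ge\alpha$ one decomposes dyadically in $|\gamma|$ \emph{only}. The replacement for your ``balance-height'' analysis is the elementary splitting \eqref{eq:Knvp}: for a zero with $e^{n}<\gamma\le e^{n+1}$ and $\beta\ge\alpha$,
\[
(1-\beta)\log x+\log\gamma\ \ge\ K\vp\log\gamma+(1-K\vp)\bigl[(1-\beta)\log x+\log\gamma\bigr]\ \ge\ K\vp\,n+(1-K\vp)\,\ot(x).
\]
The first piece turns the density factor $N(\alpha,e^{n+1})\ll e^{(\frac{12}{1-\theta}\cdot4\vp+\vp)(n+1)}$ into a convergent geometric series in $n$ once $K$ is large enough, and the second piece already gives $x\exp(-(1-\ve)\ot(x))$. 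No optimisation over the shape of $\eta$ is needed, and the bound $\om_\eta(x)\le\tfrac12\log x$ plays no role; this is why the paper can allow arbitrary $\eta:[0,\infty)\to[0,1-\theta]$ in Theorem~\ref{th:Beurlingdomainesti}.

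Your Step~3 is also unnecessary in the paper's setup: the explicit formula used (Proposition~\ref{prop:vonMangoldt}) is already pointwise for all $x$ with an error $O\big(\frac{1-\theta}{(b-\theta)^3}\log^3\frac{x+t_k}{b-\theta}(x/t_k+x^b)\big)$, and the paper simply truncates at $t_k\asymp x$ rather than at $\exp((1-\ve/2)\om_\eta(x))$, so no mollification or monotonicity argument is required.
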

Next we come to results in the opposite direction, i.e., to lower estimation of the oscillation order of $\D(x)$ corresponding to assumption on penetration of zeroes into \eqref{eq:etazerofree}. The first effective results in this direction were also obtained by Turán \cite{Turan2} in the log power case and then Stas \cite{Stas2} for general curves. Later, these results were sharpened and extended to various cases, in particular to prime ideal distribution \cite{Stas-Wiertelak-2}, \cite{Knapowski}. For the natural number system, the optimal result was then achieved by Pintz in \cite{Pintz2}.
\begin{theorem}[Pintz]\label{th:domainosci}
Conversely, assume that there is an infinite sequence of zeroes of the Riemann $\zeta$ function in the domain \eqref{eq:etazerofree}, where $\eta(t)$ is a continuously differentiable strictly convex function.

Then we have for any $\ve >0$ the oscillation $\Delta(x) = \Omega(x\exp(-(1+\ve)\omega_\eta(x)))$.
\end{theorem}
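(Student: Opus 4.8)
I would argue by contradiction along the lines of Turán's power--sum method in the refined shape due to Pintz; in the Beurling context the classical facts it rests on are replaced by the Riemann--von Mangoldt type formula, the Carlson--type density theorem and the Turán--type local density theorem quoted above, and the contour manipulations are carried out along the quasi--optimal integration paths (no Vinogradov--type region being at hand). Suppose the conclusion fails, so that $|\Delta(x)| \le x\exp(-(1+\ve)\omega_\eta(\log x))$ for all $x\ge x_0(\ve)$. The hypothesis supplies an unbounded set of heights carrying a zero $\rho_0 = \beta_0 + i\gamma_0$ with $\beta_0 \ge 1-\eta(\gamma_0)$; fix such a large $\gamma_0$. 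By the standing hypotheses on $\eta$ the minimiser $t=t^*(u)$ in \eqref{eq:omegaetadef} is unique, varies continuously and is a bijection of the scale $u$ onto $[1,\infty)$; hence there is a unique $u_0 = \log X_0$ with $t^*(u_0) = \gamma_0$, namely $u_0 = -1/(\gamma_0\eta'(\gamma_0))$, and then $\omega_\eta(u_0) = \eta(\gamma_0)u_0 + \log\gamma_0 \ge \log\gamma_0$. Since $\omega_\eta$ is a minimum of affine functions of $u$ with slopes in $[0,1/2]$, it is concave, increasing, and one--sided Lipschitz near $u_0$ with constant $\le 1/2$. The arithmetic meaning of this choice is that the contribution of $\rho_0$ to $\Delta$ at the scale $x\asymp X_0$ is of exact order $X_0^{\beta_0}/|\rho_0| \ge X_0\exp(-\omega_\eta(u_0))$ --- the sought order, before the loss of $\ve$.

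The next step is to isolate that contribution by testing $\Delta$ against a frequency--localised kernel. Using the truncated (Beurling) explicit formula along a quasi--optimal path, for a smooth unit--mass bump $g$ supported on a $v$--interval of length $\asymp 1/\delta$ about $u_0$ (here $\delta>0$ is a small auxiliary parameter, possibly tending to $0$ with $\gamma_0$) one obtains
\[
\int \Delta(X_0 e^{v})\,(X_0 e^{v})^{-1}\, e^{-i\gamma_0 v}\, g(v)\, dv \;=\; -\sum_{\rho}\frac{\Phi(\rho)}{\rho} \;+\; (\text{negligible error}),
\]
where $\Phi(\rho)$ is exponentially small unless $\gamma$ lies within $O(\delta)$ of $\gamma_0$ (bandwidth of $g$) and, on account of the weight $e^{(\beta-1)u_0}$, unless $\beta$ lies within $O(\log\gamma_0/u_0)$ of the boundary curve $\si = 1-\eta(t)$; in particular all zeros with $\beta$ bounded away from $1$ drop out. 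Using the Carlson--type density theorem one checks that the non--surviving zeros contribute a negligible total, and using the Turán--type local density theorem one bounds the number $N$ of surviving zeros $\rho_1,\dots,\rho_N$ (one of which is $\rho_0$).

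Then comes the power--sum step. Sampling the localised average at $x_k = X_0 e^{k\tau}$, $k = 1,\dots,M$, with a step $\tau\asymp 1/\gamma_0$, turns the quantity into a generalised power sum $\sum_{j=1}^N b_j z_j^{k}$ with $z_j = e^{(\beta_j - 1 + i\gamma_j)\tau}$, normalised so that $\max_j |z_j| = 1$, and $|b_j| \asymp \exp(-\omega_\eta(u_0))$ up to bounded factors. Turán's second main theorem furnishes an index $k$ in a block of $\asymp N$ consecutive integers with
\[
\Big|\sum_{j=1}^N b_j z_j^{k}\Big| \;\ge\; c_1^{\,N}\max_j|b_j|\;\ge\;c_1^{\,N}\exp(-\omega_\eta(u_0)), \qquad 0<c_1<1 \text{ absolute},
\]
whereas the assumed bound on $\Delta$, together with the one--sided Lipschitz control of $\omega_\eta$ on $\mathrm{supp}\,g$, gives the reverse inequality $|\sum_j b_j z_j^{k}| \le C\exp(-(1+\ve)\omega_\eta(u_0))$. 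Since $\omega_\eta(u_0)\ge\log\gamma_0\to\infty$, these are incompatible as soon as $N\log(1/c_1) = o(\ve\,\omega_\eta(u_0))$, which the density bounds let us arrange by taking $\delta$ small (and, if necessary, $\to 0$) while keeping $\tau$, the truncation height and the Turán block mutually compatible. Translating the contradiction back through the sampling yields $|\Delta(x_k)| \gg x_k\exp(-(1+\ve)\omega_\eta(\log x_k))$ for infinitely many $x_k$, i.e. the asserted $\Omega$--oscillation.

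The main obstacle is exactly this simultaneous balancing. The window $\delta$ must be small enough that the unavoidable Turán loss $c_1^{N}$ fits inside the budget $\exp(-\ve\,\omega_\eta(u_0))$ --- which is why the loss is $(1+\ve)$ rather than $1$ --- yet large enough that (i) the bump $g$ is sharp enough in $v$ for $e^{(\beta-1)v}\approx e^{(\beta-1)u_0}$ to hold over its support for the surviving zeros, and (ii) after localisation the sub--critical zeros really are negligible. In the Beurling setting the quality of the Carlson--type density estimate (the power of $\log$ it carries) and the strength of the Turán--type local density theorem are precisely what decide how wide a class of functions $\eta$ this scheme covers, and the quasi--optimal integration paths are what make the explicit--formula truncation legitimate at all; convexity of $\eta$ is used throughout to keep $\omega_\eta$ and its one--sided slopes under control under these perturbations and to pin down the scale $u_0$ uniquely in terms of $\gamma_0$. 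Getting all the parameters to fit together with only an arbitrarily small final loss is the crux of the proof.
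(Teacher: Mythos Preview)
Your plan is essentially correct and uses the same core ingredients as the paper---Tur\'an's power sum theorem, the Carlson-type density estimate, the Tur\'an-type local density lemma, and a weighted explicit-formula computation---but it is structurally organised differently, and one technical prerequisite is left implicit.

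The paper (which actually proves the Beurling generalisation, Theorem~\ref{th:Beurlingdomainosci}, and only cites the classical Theorem~\ref{th:domainosci} from Pintz) proceeds in two distinct stages. First (Sections~\ref{sec:upperesti}--\ref{sec:parametersandproof}, culminating in Theorems~\ref{th:goodlocalization} and~\ref{th:Deltaomega}) it establishes the oscillation $|\Delta(\widetilde x)|\ge \widetilde x\exp(-(1+\ve)\omega(\widetilde x))$ in terms of the function $\omega(x)$ defined directly from \emph{all} zeros, not from $\eta$. For this it works not with an arbitrary zero from $D(\eta)$ but with the zero extremal for $\omega(x)$, after first replacing it by a nearby ``locally borderline'' zero $\rho_0$ having $(\beta_0,1]\times i[\gamma_0-3,\gamma_0+3]$ zero-free---this is precisely the hypothesis that makes the Tur\'an local density lemma applicable. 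The kernel is the Gaussian $e^{Ls^2+Ms}$ rather than a compactly supported bump, and the power-sum variable is the continuous parameter $M$ rather than a discrete sample. Second, the passage from $\omega$ to $\omega_\eta$ is a separate convex-geometry argument: one maps zeros to the $(\beta,\log\gamma)$-plane, forms the convex hull $K$ of the boundary curve together with all $\rho_k^*$, shows infinitely many $\rho_k^*$ must be extreme points of $K$, and for each such extreme point the slope of its supporting line determines an $x_k$ with $\omega(x_k)\le\omega_\eta(x_k)$. The paper emphasises that this pointwise comparison is \emph{not} available globally and had not been clarified even in the classical case.

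Your direct route---pick $\rho_0\in D(\eta)$, set $u_0$ so that $t^*(u_0)=\gamma_0$, and immediately compare against $\omega_\eta(u_0)$---bypasses the convex-hull step, which is cleaner for this particular statement; what you forgo is the stronger intermediate result in terms of $\omega$, which the paper regards as the more fundamental formulation. One point you must make explicit, however: the Tur\'an-type local density lemma requires a zero-free box to the right of the reference zero, so before invoking it you must replace your chosen $\rho_0$ by a locally borderline one (as the paper does at the start of Section~\ref{sec:upperesti}). You should then verify that this replacement, which only increases $\beta_0$ and changes $\gamma_0$ by a factor $1+o(1)$, preserves the size comparison $X_0^{\beta_0}/|\rho_0|\gtrsim X_0\exp(-\omega_\eta(u_0))$; it does, but the step is not automatic and is where the Carlson-type density estimate first enters.
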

This result then was extended to prime ideal distribution, too \cite{Rev2}.

The present work is part of a series. In \cite{Rev-MP} we proved a number of technical auxiliary results including the Riemann-von Mangoldt formula \eqref{eq:Riemann-vonMangoldt}. In \cite{Rev-D} we worked out three theorems on the distribution of zeroes of the Beurling zeta function in the critical strip, two of them proving to be crucial in our present work. First, we extended to the Beurling case a classic local density type estimate of Turán, see Lemma \ref{th:locdens}. Second, substantially strengthening a result of Kahane \cite{K-99}, we proved a Carlson-type density estimate for zeroes in the critical strip, for which we needed two additional assumptions that time. However, very recently \cite{Rev-NewD} we succeeded in obtaining a more general version of the Carlson-type density result, which required only Axiom A to hold. Below we will present the exact formulation in Theorem \ref{th:NewDensity}. Let us note here that--at about the same time and by entirely different methods--Broucke and Debruyne obtained a similarly general version \cite{BrouckeDebruyne}. Although their exponent is way better than ours, this has no significance for us here, while our formulation has the slight advantage of being quite explicit about the arising constants.

Finally, in \cite{Rev-One} we dealt with the problem of Littlewood in the Beurling case. Though its exact form is not used here, we formulate the result for comparison\footnote{About the need for an analogous but different form of the result consider the explanations after Remark \ref{rem:strongerthanPintz}, too.} with the forthcoming Theorem \ref{th:goodlocalization}, which on the other hand does indeed play a crucial role in our argument. Here and everywhere in the sequel we denote by $A_0,A_1,\ldots$ constants \emph{depending explicitly on the main parameters}\footnote{That is, depending--in an explicit form--only on $\theta, \kappa$ and $A$ from Axiom A.} of the Beurling system $\GG$ only.
\begin{theorem}\label{th:Bonezeroosci-plus} Let $\ze(\rho_0)=0$ with
$\rho_0=\beta_0+i\gamma_0$ and $\beta_0>\theta, ~ \gamma_0>0$.
Then for arbitrary $0<\ve<0.1$ and $\log Y > Y_0(\ve,\rho_0):=\max \left\{ \frac{5\log\frac{1}{\beta_0-\theta}}{\beta_0-\theta}, \frac{\log(8/\ve)}{\beta_0-\theta}, \frac{40}{\ve^2 \gamma_0^4}, \log|\rho_0|, A_1 \right\},$ there exists an $x$ in the interval
\begin{equation}\label{eq:xinterval}
I:=\left[Y,Y^{A_2\frac{\log(\gamma_0+5)}{(\beta_0-\theta)^2}}\right],
\end{equation}
such that
\begin{equation}\label{eq:oscillationpihalf}
\left| \D (x) \right| > \left(\frac{\pi}{2}-\varepsilon\right)
\frac{x^{\beta_0}}{|\rho_0|}.
\end{equation}
\end{theorem}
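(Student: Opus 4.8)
The plan is to detect the prescribed zero $\rho_0$ by a \emph{matched‑filter mean value} over the interval $I$, into which the truncated Riemann--von Mangoldt formula is fed. Write $u=\log x$ and $\phi_0=\arg\rho_0\in(0,\pi/2)$ (legitimate since $\beta_0,\gamma_0>0$), put $Z:=Y^{A_2\log(\gamma_0+5)/(\beta_0-\theta)^2}$ so that $I=[Y,Z]$, and consider
\[
\mathcal M:=\frac1{\log(Z/Y)}\int_{\log Y}^{\log Z}\frac{\Dp(e^u)}{e^{\beta_0 u}}\,\bigl(-\cos(\gamma_0 u-\phi_0)\bigr)\,du .
\]
Inserting \eqref{eq:Riemann-vonMangoldt} in the shape $\Dp(x)=-\sum_{|\Im\rho|\le T}x^{\rho}/\rho+R(x,T)$ and using that the non‑real zeroes occur in conjugate pairs, the pair $\{\rho_0,\overline{\rho_0}\}$ contributes to the integrand the quantity $\bigl(-2\Re(e^{(\rho_0-\beta_0)u}/\rho_0)\bigr)\cdot\bigl(-\cos(\gamma_0 u-\phi_0)\bigr)=\tfrac2{|\rho_0|}\cos^2(\gamma_0 u-\phi_0)\ge 0$, whose mean over a long $u$‑range equals $\tfrac1{|\rho_0|}(1+o(1))$. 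On the other hand one has trivially
\[
|\mathcal M|\le\Bigl(\max_{x\in I}\frac{|\Dp(x)|}{x^{\beta_0}}\Bigr)\cdot\frac1{\log(Z/Y)}\int_{\log Y}^{\log Z}|\cos(\gamma_0 u-\phi_0)|\,du,
\]
and the last mean tends to $2/\pi$. Hence, \emph{provided the error analysis below shows that $\mathcal M$ as a whole still equals $\tfrac1{|\rho_0|}(1+o(1))$}, dividing the two displays produces a point $x\in I$ with $|\Dp(x)|/x^{\beta_0}>(\pi/2-\ve)/|\rho_0|$, i.e.\ \eqref{eq:oscillationpihalf}; the constant $\pi/2$ is exactly $\bigl(\text{mean of }2\cos^2\bigr)\big/\bigl(\text{mean of }|\cos|\bigr)$, and is optimal for this scheme.

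The analytic content is therefore the claim $\mathcal M=\tfrac1{|\rho_0|}(1+o(1))$, which means controlling in the mean every contribution other than the $\{\rho_0,\overline{\rho_0}\}$ one. The truncation remainder $R(x,T)$ is made negligible by choosing $T$ a suitable power of $\log Z$ and invoking the explicit bound in \eqref{eq:Riemann-vonMangoldt}. Among the other zeroes, those with $\Re\rho<\beta_0$ whose ordinate stays away from $\gamma_0$ give, after the $u$‑average, a contribution of size $O\!\bigl(\tfrac1{|\rho|\,|\Im\rho-\gamma_0|\,\log(Z/Y)}\bigr)$, since $e^{(\Re\rho-\beta_0)u}\le 1$ and the phase $e^{i(\Im\rho)u}$ beats against $\cos(\gamma_0 u-\phi_0)$; summing over all of them with the Carlson‑type density estimate of Theorem~\ref{th:NewDensity} renders the total $o(1/|\rho_0|)$ for $\log Y$ large. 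The genuinely delicate part is a thin rectangle of half‑width $\asymp 1/\log(Z/Y)$ about $\rho_0$ (and its mirror about $\overline{\rho_0}$): pointwise a cluster of zeroes there can swamp the $\cos^2$‑term, so one first \emph{counts} such zeroes by the Tur\'an‑type local density Lemma~\ref{th:locdens}, and then applies a Tur\'an power‑sum estimate (the second main theorem, over the long $u$‑range $[\log Y,\log Z]$, and which of itself favours a zero of larger real part should one exist — so a zero to the right of $\rho_0$ only strengthens the conclusion) to extract a subinterval of $u$‑values on which the cluster contributes less than any prescribed fraction of $1/|\rho_0|$. It is precisely the requirement that this range be long enough relative to the local zero count that forces $\log Z/\log Y\asymp\log(\gamma_0+5)/(\beta_0-\theta)^2$ and the lower bounds gathered in $Y_0(\ve,\rho_0)$: the term $5\log(1/(\beta_0-\theta))/(\beta_0-\theta)$ from the local density count feeding the power‑sum step, $\log(8/\ve)/(\beta_0-\theta)$ from the decay $e^{-(\beta_0-\theta)\log Y}$ of the far‑left zeroes, $40/(\ve^2\gamma_0^4)$ from the rate of equidistribution of $\gamma_0 u \bmod \pi$ that governs the two trigonometric means above, and $\log|\rho_0|,A_1$ from the validity ranges of \eqref{eq:Riemann-vonMangoldt} and of the density lemmas. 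Combining everything gives $\mathcal M\ge\tfrac1{|\rho_0|}(1-\tfrac\ve C)$ on a suitable subinterval $[Y,Z']\subseteq I$, while the $|\cos|$‑mean there is $\le\tfrac2\pi(1+\tfrac\ve C)$; dividing and absorbing the $\ve$'s yields the desired $x\in I$.

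The main obstacle is exactly the thin‑rectangle step: ruling out a conspiracy of zeroes accumulating just to the left of $\rho_0$ with ordinates within $o(1)$ of $\gamma_0$, which can cancel or overwhelm the $\cos^2$‑contribution at every single point of $I$. The only remedy is Tur\'an's quantitative power‑sum machinery, which trades the pointwise statement for one valid over a long range of scales; this is what compels the length of $I$ to blow up like $(\beta_0-\theta)^{-2}$ as $\rho_0$ approaches the abscissa $\theta$ and like $\log\gamma_0$ in height. It is also why the argument requires the \emph{fully explicit} Beurling forms — only under Axiom~A — of the explicit formula, of the Carlson density theorem and of the local Tur\'an density theorem: keeping the loss confined to the single factor $2/\pi$, so that the surviving constant is $\pi/2-\ve$ and not something smaller, demands that all the auxiliary constants be tracked, which is exactly the point of the dependence on $\theta,\kappa,A$ (the $A_0,A_1,\dots$) throughout. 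The rest is routine bookkeeping of the admissible sizes of $T$, of the rectangle half‑width, and of the power‑sum subrange, together with the verification that every smallness demand is satisfied once $\log Y>Y_0(\ve,\rho_0)$.
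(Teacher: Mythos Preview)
First, note that the paper does not prove this theorem: it is quoted from the companion work \cite{Rev-One} and stated here only for comparison with Theorem~\ref{th:goodlocalization}. So there is no in-paper argument to set yours against directly; the closest visible analogue is the Gaussian-weight machinery of Sections~\ref{sec:upperesti}--\ref{sec:parametersandproof}.

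Your matched-filter scheme with weight $-\cos(\gamma_0 u-\phi_0)$ is the correct source of the constant $\pi/2$, via the ratio of the mean of $2\cos^2$ to the mean of $|\cos|$. But the error analysis has a genuine gap. You partition the remaining zeroes into (i) those with $\Re\rho<\beta_0$ and ordinate away from $\gamma_0$, and (ii) a thin rectangle about $\rho_0$. A zero $\rho=\beta+i\gamma$ with $\beta>\beta_0$ and, say, $|\gamma-\gamma_0|\ge 1$ lies in neither class, and nothing in the hypotheses excludes such zeroes. Its contribution to $\mathcal M$ carries the growing factor $e^{(\beta-\beta_0)u}$; integrating against the cosine and normalising leaves a term of order
\[
\frac{Z^{\beta-\beta_0}}{|\rho|\,|\gamma-\gamma_0|\,\log(Z/Y)},
\]
which for $\beta-\beta_0$ bounded below and $\log Z$ large is enormous and of uncontrolled sign, destroying the claim $\mathcal M=|\rho_0|^{-1}(1+o(1))$. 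Your parenthetical that a zero to the right of $\rho_0$ ``only strengthens the conclusion'' is true of the \emph{final inequality} but false for the \emph{filter} $\mathcal M$: a rightward zero with $\gamma\ne\gamma_0$ beats against your cosine with the wrong phase and can cancel the $\rho_0$ term outright. Separately, your description of the Tur\'an step is inverted: Lemma~\ref{l:SecondMain} delivers a \emph{lower} bound on the full normalised power sum (the $\rho_0$ term being exactly $1$), not a mechanism for making the cluster small on a subrange.

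The repair is visible in the paper's treatment of Theorem~\ref{th:goodlocalization}. Replace the hard cutoff $[\log Y,\log Z]$ by a Gaussian weight $\exp\bigl(-(u-M)^2/4L\bigr)$; on the residue side each zero then acquires the factor $e^{L(\rho-\rho_0)^2}$ of modulus $e^{L[(\beta-\beta_0)^2-(\gamma-\gamma_0)^2]}$, which kills zeroes with $|\gamma-\gamma_0|\ge 3$ exponentially \emph{regardless} of the sign of $\beta-\beta_0$ (cf.\ \eqref{Zfarrootsum}). For the surviving zeroes with $|\gamma-\gamma_0|<3$ one either first passes to a locally rightmost zero (as at the opening of Section~\ref{sec:upperesti}) so that $\beta\le\beta_0$ is guaranteed, or simply observes that Lemma~\ref{l:SecondMain} requires only $\Re w_1=0$ and therefore still yields a positive lower bound even if some nearby $\beta$ exceed $\beta_0$. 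The cosine filter must then be layered on top of the Gaussian (not the hard cutoff) if the sharp $\pi/2$ is to survive; with a bare Gaussian, as in Theorem~\ref{th:goodlocalization}, one obtains only $\xi^{\beta_0}/|\rho_0|^{1+\ve}$ rather than $(\pi/2-\ve)\,\xi^{\beta_0}/|\rho_0|$.
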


Interestingly, the best constant in \eqref{eq:oscillationpihalf} is indeed $\pi/2$. To construct counterexamples, however, required the full strength of the recent sharpening \cite{BrouckeVindas} by Broucke and Vindas of the Diamond, Montgomery and Vorhauer method.

Here we generalize the above results of Pintz in the direct and converse Ingham problem to the Beurling case, assuming Axiom A only.
\begin{theorem}\label{th:Beurlingdomainesti} Let the arithmetical semigroup $\GG$ satisfy Axiom A. Assume that $\eta(t):[0,\infty)\to [0,1-\theta]$ is an arbitrary real function such that $\DD(\eta)$ is free of zeroes of the Beurling zeta function $\zp$.

Then for arbitrary $\ve>0$ we have $\Dp(x)=O(x\exp(-(1-\ve)\omega_\eta(x)))$.
\end{theorem}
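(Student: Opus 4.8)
The plan is to adapt, step by step, the method Pintz used in \cite{Pintz2} to prove the classical Theorem~\ref{th:domainesti}, replacing each appeal to special knowledge of the Riemann zeta function by the Beurling-context substitutes worked out in \cite{Rev-MP,Rev-D,Rev-NewD}. First I would make a harmless regularization of the weight: only $\omega_\eta$ enters the conclusion, and it changes by at most an additive $o(\omega_\eta(\log x))$ if $\eta$ is replaced by a suitable smooth convex minorant still bounded by $1-\theta$; such a replacement only enlarges $\DD(\eta)$, so the hypothesis $\zp\ne 0$ on $\DD(\eta)$ is preserved, while the asserted bound is weakened only by the factor $\exp(o(\omega_\eta(\log x)))$. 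From now on $\eta$ may be assumed as smooth and convex as convenient.

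Next I would set up a smoothed explicit formula. Fix a kernel $g\ge 0$ concentrated near scale $\delta$ (to be chosen) whose Mellin transform $\widehat g$ is entire and decays faster than any power of $|\Im s|$, and form the corresponding smoothing $\widetilde\psi_\PP$ of $\psi_\PP$. Using the Riemann--von Mangoldt formula \eqref{eq:Riemann-vonMangoldt} one writes $\widetilde\psi_\PP(x)$ as a contour integral of $-(\zp'/\zp)(s)\,\widehat g(s)\,x^s/s$, shifts the contour onto one of the quasi-optimal integration paths of \cite{Rev-MP} running inside $\DD(\eta)$ close to $\sigma=1-\eta(|t|)$ and picking up only the residue of the simple pole at $s=1$ (the main term), and bounds $\zp'/\zp$ on that path by Borel--Carath\'eodory; the lower bound for $|\zp|$ and the bound for $\log\zp$ that this requires are supplied by the zero-free hypothesis together with the Carlson-type density Theorem~\ref{th:NewDensity} and the Tur\'an-type local density Lemma~\ref{th:locdens}. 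Because $\widehat g$ decays rapidly the resulting integral converges absolutely, and by the definition \eqref{eq:omegaetadef} of $\omega_\eta$ it is $\ll x\exp(-(1-\ve)\omega_\eta(\log x))$ up to a factor polynomial in $1/\delta$ and polylogarithmic in $x$. In the same manner, now with the absolutely convergent weight $1/(s(s+1))$ in place of $1/s$, one recovers Ingham's "right order" estimate $\int_1^x\psi_\PP(y)\,dy=\tfrac12 x^2+O\!\bigl(x^2\exp(-(1-\ve)\omega_\eta(\log x))\bigr)$, exactly as in Theorem~21 of \cite{Ingham}.

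The heart of the matter is the passage from such a smoothed or integrated estimate back to $\psi_\PP$ itself without the loss of the factor $\tfrac12$ that Ingham incurred. A single finite difference of $\int_1^x\psi_\PP$, or a single crude smoothing of $\psi_\PP$, loses that factor through an arithmetic--geometric-mean trade-off between the length of the difference and the size of the error. Pintz's device exploits instead that $\psi_\PP$ — and hence any nonnegative-kernel smoothing $\widetilde\psi_\PP$ — is nondecreasing: if $\Dp(x_0)$ is large in absolute value, then $\Dp$, and with it the relevant smoothing, is forced to retain a comparably large value throughout a whole interval, not merely at the single point $x_0$. Pairing this with the absolutely convergent zero-expansions of the previous step, with the Beurling mean-value and density estimates (which bound the associated mean squares with the good exponent), and with effective localization of the large values of $\psi_\PP$ in the spirit of Theorems~\ref{th:Bonezeroosci-plus} and~\ref{th:goodlocalization}, and then optimizing jointly over the smoothing scale $\delta$ and the order of the auxiliary antiderivative, one should extract $|\Dp(x)|\ll x\exp(-(1-\ve)\omega_\eta(\log x))$ with a fresh, arbitrarily small $\ve>0$.

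The step I expect to be the main obstacle is exactly this last one: carrying out Pintz's optimization so that the polynomial-in-$1/\delta$ and polylogarithmic losses intrinsic to the Beurling mean-value and density estimates — which are weaker and less explicit than their classical analogues — are absorbed into the $\ve$ rather than dragging the exponent down to some value strictly between $\tfrac12$ and $1$. Concretely, the kernel $g$, the quasi-optimal contour, the interval on which monotonicity of $\psi_\PP$ is exploited, and the order of the smoothing must all be chosen in a mutually compatible way; it is precisely here, rather than in the relatively routine contour estimates, that the Beurling-adapted machinery of \cite{Rev-MP,Rev-D,Rev-NewD} does the decisive work.
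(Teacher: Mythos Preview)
Your proposal diverges from the paper's approach at the most important point, and the divergence is precisely where your acknowledged ``main obstacle'' lies.

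The paper does \emph{not} shift the contour to the boundary of the zero-free region $\DD(\eta)$ and then try to undo the $\tfrac12$-loss. Instead it pushes the contour all the way to a broken line near $\Re s=\theta$ (Lemma~\ref{l:path-translates}, Proposition~\ref{prop:vonMangoldt}), picking up \emph{all} zeros as residues, and reduces the problem to bounding the absolute-value sum
\[
Z_a(x)=\sum_{\rho\in\Z(a;x)}\frac{x^\beta}{|\rho|}.
\]
The key observation is that each term satisfies $x^\beta/|\rho|=x\exp\bigl(-((1-\beta)\log x+\log|\rho|)\bigr)\le x\exp(-\omega(x))$ by the very definition of Pintz's function $\omega$, and the Carlson-type density theorem (Theorem~\ref{th:NewDensity}) shows that the number of near-extremal terms is small enough that the whole sum is $\ll x\exp(-(1-\ve)\omega(x))$; see Proposition~\ref{p:Zomega} and the splitting trick \eqref{eq:Knvp}. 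No smoothing, no unsmoothing, no monotonicity device is needed. Finally, the zero-free hypothesis gives the trivial pointwise inequality $\omega_\eta(x)\le\omega(x)$ (Corollary~\ref{cor:ometaomupper}), and Theorem~\ref{th:Beurlingdomainesti} drops out.

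Your route, by contrast, is essentially Ingham's: shift only to $\partial\DD(\eta)$, pick up no zeros, and estimate the integral. That is exactly what produces the factor $\tfrac12$. What you call ``Pintz's device''---exploiting monotonicity of $\psi_\PP$ to pass from $\Psi_1$ or a smoothing back to $\psi_\PP$---is the \emph{standard} unsmoothing step, and it is precisely this step that \emph{causes} the loss (one must balance the differencing length against the error). Pintz's actual innovation in \cite{Pintz2}, faithfully followed here, was to abandon that route entirely in favour of the zero-sum estimate via $\omega$. Your appeal to Theorems~\ref{th:Bonezeroosci-plus} and \ref{th:goodlocalization} is a non-sequitur: those are $\Omega$-results producing large values of $|\Dp|$, and they play no role in an $O$-estimate. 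In short, the paragraph you flag as the hard part is not a gap to be filled but a wrong turn; the density theorem does all the work, applied not on the contour but to the residue sum.
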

\begin{theorem}\label{th:Beurlingdomainosci}
Conversely, let us assume that the arithmetical semigroup $\GG$ satisfies Axiom A, and that there are infinitely many zeroes of $\zp$ within the domain \eqref{eq:etazerofree}, where $\eta(t)$ is convex in logarithmic variables (i.e. $\eta(e^v)$ is convex).

Then we have for any $\ve >0$ the oscillation estimate $\Dp(x)=\Omega(x\exp(-(1+\ve)\omega_\eta(x)))$.
\end{theorem}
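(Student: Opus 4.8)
The plan is to realize the classical Pintz strategy for the converse Ingham problem in the Beurling setting, using the zeta-zeros as a source of oscillation via an explicit formula and Turán's power sum method. First I would fix $\ve>0$ and, exploiting the infinitude of zeros of $\zp$ in $\DD(\eta)$, select from them a large family $\rho_j=\beta_j+i\gamma_j$ ($j=1,\dots,N$) with ordinates $\gamma_j$ clustered in a conveniently chosen range $[T,T(1+\delta)]$ (or a dyadic-type block), all satisfying $\beta_j\ge 1-\eta(\gamma_j)$; the Carlson-type density theorem, Theorem \ref{th:NewDensity}, guarantees that such clusters of many zeros genuinely exist and controls how many we may demand. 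For a suitable truncation height and a suitable target length $\log x\asymp u$, the quantity $\eta(\gamma_j)\log x+\log\gamma_j$ will be close to its minimum $\om_\eta(\log x)$ for the zeros in our block — this is exactly where the Legendre-transform structure of $\om_\eta$ is used, and it is why one must choose $T$ depending on $x$ (or vice versa) so that the "active" zeros are those achieving the minimum in \eqref{eq:omegaetadef}.

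Next I would invoke the Beurling Riemann--von Mangoldt type explicit formula from \cite{Rev-MP} (the formula \eqref{eq:Riemann-vonMangoldt} referenced in the excerpt) to write $\Dp(x)=\psip(x)-x$ as (minus) a sum $\sum_\rho x^\rho/\rho$ over zeros in a truncated region, plus error terms controlled under Axiom A. The game, following Pintz, is then to show that $|\Dp(x)|$ cannot be uniformly small on a long interval: if it were, one could integrate $\Dp(x)x^{-s-1}$ against well-chosen kernels and derive that a certain power sum $\sum_j b_j z_j^k$ with $z_j = $ (essentially) $\rho_j$ or $x^{i\gamma_j}$-type phases is small for many consecutive $k$, contradicting a lower bound from Turán's second main theorem. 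Concretely I expect to use a kernel/weight (a Fejér-type or a Pintz-style smoothing with a free parameter) to isolate the contribution of the chosen block of zeros, bound the tail using the density estimate, and then apply the Turán one-sided power sum estimate (or the $\Omega$-version, since we only need a one-sided oscillation $\Omega$, not $\Omega_\pm$) together with the local density Lemma \ref{th:locdens} to guarantee the number of terms $N$ is large enough to beat the losses. The main term coming out is of size $x^{\beta_j}/|\rho_j|\gg x \exp(-\eta(\gamma_j)\log x-\log\gamma_j)\ge x\exp(-(1+\ve)\om_\eta(\log x))$ for $x$ along a sparse sequence, which is exactly the claimed bound; the convexity of $\eta$ in logarithmic variables is what makes the minimum in \eqref{eq:omegaetadef} well-localized and lets the same block of zeros remain "active" over a whole short range of $x$, so the Turán argument has enough consecutive exponents $k$ to work with.

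The step I expect to be the main obstacle is the quantitative matching of three competing parameters: the number $N$ of usable zeros in a cluster (limited from below only by the Beurling Carlson density Theorem \ref{th:NewDensity}, which has a weaker exponent than in the classical case), the number of consecutive power-sum exponents $k$ available (limited by how long the interval of $x$ can be kept while the same zeros stay near-optimal in \eqref{eq:omegaetadef}, i.e. by the curvature of $\eta$ in log variables), and the size of the Axiom A error terms in the explicit formula (governed by $\theta$ and forcing the truncation height up). In the classical setting Pintz had the full strength of Riemann zeta bounds; here one must instead route everything through Axiom A, the Beurling Riemann--von Mangoldt formula, the local density estimate of Turán type (Lemma \ref{th:locdens}) and the new Carlson-type density estimate (Theorem \ref{th:NewDensity}), and verify that with these weaker inputs the Turán power sum inequality still yields a lower bound of the required shape $x\exp(-(1+\ve)\om_\eta(\log x))$ after absorbing all errors into the $\ve$. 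A secondary technical point is handling the case where $\eta$ is merely assumed convex in logarithmic variables but not smooth or not bounded away from trivial behavior: one reduces to the smooth strictly convex case by an approximation argument, replacing $\eta$ by a slightly larger smooth convex minorant of $1-\beta_j$ along the chosen zeros, at the cost of another arbitrarily small increment in $\ve$.
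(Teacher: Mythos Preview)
Your overall architecture (explicit formula plus Tur\'an power sum) is the right one, but two key pieces are misdirected.

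First, you have the role of several inputs reversed. The Carlson-type density theorem is an \emph{upper} bound on the number of zeros close to $\Re s=1$; it cannot be used to ``guarantee clusters of many zeros exist''. In the paper it enters only on the \emph{upper estimate} side (to show $Z_a(x)\ll x\exp(-(1-\ve)\om(x))$) and in a minor way to select a single zero $\rho_0$ that is near-optimal for $\om(x)$ and has a small zero-free box above it. Likewise, in the Tur\'an second main theorem the lower bound is $\bigl((K-H)/(8eK)\bigr)^n$, which \emph{deteriorates} as $n$ grows; so you want \emph{few} terms, not many. The paper uses the Tur\'an-type local density Lemma~\ref{th:locdens} precisely to show that in a tiny disk around $\rho_0$ the number of zeros is $\ll r_0\log\gamma_0$, small enough that the Tur\'an loss is absorbed into the $\ve$.

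Second, and more seriously, you are missing the mechanism that links $\om_\eta$ to actual zeros. For a given $x$, the minimum in $\om_\eta(x)=\min_t(\eta(t)\log x+\log t)$ is attained at some $t=t(x)$; but there is \emph{no reason} any zero $\rho_j\in\DD(\eta)$ should have $\gamma_j$ close to $t(x)$. Hence the inequality $x^{\beta_j}/|\rho_j|\ge x\exp(-\om_\eta(x))$ can fail for every zero. The paper resolves this by first proving the stronger statement with $\om(x)$ (the infimum over \emph{actual} zeros), which sidesteps the issue entirely; and then, to deduce the $\om_\eta$ version, it runs a separate convex-geometry argument: map zeros via $\rho\mapsto(\beta,\log\gamma)$, form the convex hull $K$ of the complement of $\DD(\eta)^*$ together with the mapped zeros, show (using convexity of $\eta(e^v)$) that infinitely many zeros are \emph{extreme points} of $K$, and for each such extreme point read off from its supporting line a value $x_k$ for which simultaneously $\rho_k$ is optimal for $\om(x_k)$ \emph{and} $\om(x_k)\le\om_\eta(x_k)$. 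This convex-hull step is where the log-convexity hypothesis is genuinely used, and it is not an approximation or smoothing argument; without it your proposal has no way to produce a sequence of $x$'s along which the comparison $\om_\eta\ge\om$ holds.
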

\begin{remark}\label{rem:strongerthanPintz} This is slightly stronger than Theorem \ref{th:domainosci} of Pintz, for he requires $\eta'(t)$ increasing, while here we need only $\frac{d}{dv} (\eta(e^v))=\eta'(e^v) e^v =\eta'(t)\cdot t$ increasing. In the most interesting special cases of $\eta(t)=c\log^p(\log t) \log^q t$, with $q<0$ and $p\in \RR$, these conditions are both satisfied, however.
\end{remark}
On our way towards answering the Ingham question, we will need a result similar to Theorem \ref{th:Bonezeroosci-plus} but with a much better localization for the occurring value of $x$. Basically, instead of \eqref{eq:xinterval} we need an interval of the type $[X^{1-\ve},X]$. That we can get at the expense of some restrictions on the zero considered and a slight loss in the magnitude of the obtained oscillation, see Theorem \ref{th:goodlocalization} below. This corresponds to Theorem 2 of \cite{Pintz1} in the classical case.

In fact, we will prove some even stronger statements than the above Theorems \ref{th:Beurlingdomainesti} and \ref{th:Beurlingdomainosci}, whose formulation needs further explanations and technicalities. Therefore, these further results will be discussed in due course in the sequel only, see in particular the end of Sections \ref{sec:upper} and \ref{sec:parametersandproof}. These correspond to recent\footnote{Note that these surfaced decades later than his above cited original achievements.} advances of Pintz \cite{PintzProcStekl} in the classical case. However, to see that these modern versions are indeed stronger is not equally easy for the two directions. For the direct estimate of $\D(x)$ from knowledge about the zeroes, the comparison is almost trivial, but in the direction of the inverse, oscillation results, it becomes somewhat involved. This is explained by the different formulations: location of the zeroes is directly estimated if a function $\eta(t)$ defines a zero-free region \eqref{eq:etazerofree}, but in case we assume that \emph{there are infinitely many zeroes in the domain} $D(\eta)$, it is not clear how the distribution of zeroes can be compared to the curve bounding $D(\eta)$. And indeed, there is no automatic, general comparison of Theorem \ref{th:Beurlingdomainosci} and the Pintz type newer result, and a favorable comparison is possible only along an indirectly and geometrically constructed subsequence. This comparison we clarify at the end of Section \ref{sec:parametersandproof}. This comparison was not clarified earlier, not even in the classical case of natural numbers and the Riemann zeta function.

\section{Some auxiliary lemmas}\label{sec:aux}

\begin{lemma}\label{l::integralformula} For $a>0$, $b\in\CC$ and $c\in \RR$, we have
\begin{equation}\label{eq:integralformula}
\frac{1}{2\pi i} \int_{c-i\infty}^{c+i\infty} e^{as^2+bs} ds =
\frac{1}{2\sqrt{\pi a}} \exp\left(-\frac{b^2}{4a}\right).
\end{equation}
\end{lemma}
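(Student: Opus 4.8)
The plan is to reduce the contour integral to the classical real Gaussian integral by parametrizing the vertical line and completing the square in the exponent. First I would substitute $s=c+it$ with $t$ running over $\RR$, so that $ds=i\,dt$ and
\[
\frac{1}{2\pi i}\int_{c-i\infty}^{c+i\infty} e^{as^2+bs}\,ds=\frac{1}{2\pi}\int_{-\infty}^{\infty}\exp\bigl(-at^2+i(2ac+b)t+ac^2+bc\bigr)\,dt,
\]
after expanding $(c+it)^2=c^2+2ict-t^2$ and collecting the exponent as a quadratic polynomial in $t$. Since $a>0$, the real part of this exponent tends to $-\infty$ like $-at^2$, so the integral converges absolutely and all manipulations below are legitimate.

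Next I would complete the square in $t$, writing the exponent as $-a\bigl(t-\tfrac{i(2ac+b)}{2a}\bigr)^2+\bigl(ac^2+bc+\tfrac{(i(2ac+b))^2}{4a}\bigr)$. A short computation using $(i(2ac+b))^2=-(2ac+b)^2=-4a^2c^2-4abc-b^2$ shows that the constant term collapses exactly to $-b^2/(4a)$. Hence the left-hand side equals $\frac{1}{2\pi}\exp\bigl(-\tfrac{b^2}{4a}\bigr)\int_{-\infty}^{\infty}\exp\bigl(-a(t-\beta)^2\bigr)\,dt$ with $\beta:=\tfrac{i(2ac+b)}{2a}\in\CC$.

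It then remains to evaluate $\int_{-\infty}^{\infty}\exp(-a(t-\beta)^2)\,dt$. Writing $u=t-\beta$, this is the integral of the entire function $e^{-au^2}$ along the horizontal line $\Im u=-\Im\beta$; shifting this line to the real axis by Cauchy's theorem — the two vertical sides of the bounding rectangle at $\Re u=\pm T$ contribute amounts bounded by a constant times $|e^{-au^2}|=e^{-a(T^2-(\Im u)^2)}\to 0$ as $T\to\infty$ on the fixed strip — reduces it to the standard value $\int_{-\infty}^{\infty}e^{-au^2}\,du=\sqrt{\pi/a}$. Substituting back gives $\frac{1}{2\pi}\sqrt{\pi/a}\,e^{-b^2/(4a)}=\frac{1}{2\sqrt{\pi a}}e^{-b^2/(4a)}$, as claimed. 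Alternatively, one may avoid the complex shift entirely by first noting that both sides are entire functions of $b$ (for fixed $a>0$, $c\in\RR$) and that the left-hand side is independent of $c$ — another application of Cauchy's theorem, the horizontal pieces of the relevant rectangle vanishing since $\Re(a(\sigma\pm iT)^2)=a(\sigma^2-T^2)\to-\infty$ uniformly in $\sigma$ on a compact interval — thereby reducing to the case $b\in\RR$, $c=0$, which is exactly the Fourier transform of a Gaussian.

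The only genuine subtlety — hardly a real obstacle — is the justification of the horizontal-to-real contour shift and of the independence from $c$, i.e. checking that the side contributions vanish, together with invoking the classical identity $\int_{-\infty}^{\infty}e^{-au^2}\,du=\sqrt{\pi/a}$ for $a>0$; everything else is straightforward bookkeeping with the quadratic exponent.
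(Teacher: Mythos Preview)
Your proof is correct and complete; the paper itself does not give an argument at all, merely citing the formula as ``directly computable'' and referring to \cite{Pintz1}, formula (10.2). Your direct computation via completing the square and a contour shift is exactly the standard way to establish the identity, so there is nothing to compare.
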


\begin{proof} This directly computable formula is taken--as in \cite{RevAA}--from \cite{Pintz1}, formula (10.2).
\end{proof}

\begin{lemma}\label{l:estimates} The following estimates hold true.
\begin{itemize}
\item{(i)} For any $B\geq 1/2$,
$$ \int_B^{\infty} e^{-x^2} dx < e^{-B^2}.$$
\item{(ii)} For any $0\le \lambda \le 2$ and $B\ge 1$ we have
$$ \int_B^{\infty} x^{\lambda} e^{-x^2} dx < B^{\lambda-1}e^{-B^2}.$$
\item{(iii)} For any $\lambda \geq 1$, $0<\alpha <1$ and $x\geq 1$ we have
$$ \log^{\lambda } x \leq e^{\lambda /\alpha+\lambda ^2} x^{\alpha}.$$
\end{itemize}
\end{lemma}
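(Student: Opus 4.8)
All three inequalities are elementary; the plan is to handle them in turn with short one-variable calculus arguments, taking a little care where strict inequality is claimed.

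For (i), the idea is to exploit that $x/B\ge 1$ on $[B,\infty)$, with strict inequality off the endpoint. Hence $\int_B^\infty e^{-x^2}\,dx<\int_B^\infty \frac{x}{B}e^{-x^2}\,dx$, and the right-hand side evaluates in closed form to $\frac{1}{2B}e^{-B^2}$; since $B\ge \tfrac12$ gives $\frac{1}{2B}\le 1$, the bound $\int_B^\infty e^{-x^2}\,dx<e^{-B^2}$ follows. We also record the sharper intermediate bound $\int_B^\infty e^{-x^2}\,dx<\frac{1}{2B}e^{-B^2}$ for use in (ii).

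For (ii), the plan is to write $x^\lambda e^{-x^2}=x^{\lambda-1}\cdot xe^{-x^2}$ and integrate by parts using the primitive $-\tfrac12 e^{-x^2}$ of $xe^{-x^2}$. The term at infinity vanishes and the term at $B$ contributes $\tfrac12 B^{\lambda-1}e^{-B^2}$, leaving $\tfrac{\lambda-1}{2}\int_B^\infty x^{\lambda-2}e^{-x^2}\,dx$. Because $\lambda\le 2$, we have $x^{\lambda-2}\le B^{\lambda-2}$ on $[B,\infty)$, so when $\lambda\ge 1$ this remaining integral is, by the sharper bound from (i), at most $\tfrac{\lambda-1}{4}B^{\lambda-3}e^{-B^2}$, while for $\lambda<1$ it contributes negatively. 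In either case $\int_B^\infty x^\lambda e^{-x^2}\,dx\le B^{\lambda-1}e^{-B^2}\bigl(\tfrac12+\tfrac{\lambda-1}{4B^2}\bigr)$, and for $B\ge 1$, $0\le\lambda\le 2$ the parenthesis is at most $\tfrac34<1$.

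For (iii), I would first dispose of $1\le x\le e$, where $0\le\log x\le 1$ and $\lambda\ge1$ give $\log^\lambda x\le\log x\le 1\le e^{\lambda/\alpha+\lambda^2}x^\alpha$. For $x\ge e$ one takes logarithms and applies the concavity (tangent-line) estimate $\log y\le\frac{\alpha}{\lambda}y+\log\frac{\lambda}{\alpha}-1$, obtained by maximizing $\log y-\frac{\alpha}{\lambda}y$, with $y=\log x\ge 1$. Multiplying by $\lambda$ yields $\log^\lambda x\le x^\alpha(\lambda/\alpha)^\lambda e^{-\lambda}$, and it remains only to check $\lambda\log\frac{\lambda}{\alpha}-\lambda\le\lambda/\alpha+\lambda^2$; dividing by $\lambda$ this reduces to $\log\lambda+\log\frac1\alpha-1\le\lambda+\frac1\alpha$, which holds since $\log\lambda<\lambda$ and $\log\frac1\alpha<\frac1\alpha$. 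There is no real obstacle here; the only points needing attention are preserving the strict inequalities asserted in (i) and (ii) and keeping track of the sign of $\lambda-1$ in the integration by parts.
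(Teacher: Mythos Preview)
Your argument is correct in all essentials. The paper itself gives no detailed proof here, saying only that ``the proofs are easy direct calculations'' and referring to \cite{RevAA} for (i) and (iii), so your elementary one-variable calculus approach is entirely in the intended spirit.

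One small slip in your treatment of (ii): for $\lambda<1$ the unified bound
\[
\int_B^\infty x^\lambda e^{-x^2}\,dx\le B^{\lambda-1}e^{-B^2}\Bigl(\tfrac12+\tfrac{\lambda-1}{4B^2}\Bigr)
\]
is not what you actually established. In that range the parenthesis is strictly below $\tfrac12$, whereas your observation ``it contributes negatively'' only yields $\int_B^\infty x^\lambda e^{-x^2}\,dx<\tfrac12 B^{\lambda-1}e^{-B^2}$. This simpler bound already gives the required strict inequality, so the conclusion stands; just state the $\lambda<1$ case separately rather than folding it into the ``in either case'' formula.
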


The proofs are easy direct calculations. For (i) and (iii) see also \cite{RevAA}.

\begin{lemma}[{\bf Continuous form of the Second Main Theorem of Turán's Power Sum Theory}]\label{l:SecondMain} If $0< H < K$, $n\in \NN$, and
$w_{j}\in\CC$ ($j=1,\dots,n$) are arbitrary complex numbers with $\Re w_1=0$, then we have
$$
\max_{H\leq M \leq K} \Re \left( \sum_{\ell=1}^{n} e^{i w_{\ell} M} \right) \geq \left(\frac{K-H}{8eK }\right)^n.
$$
\end{lemma}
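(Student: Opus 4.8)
The plan is to reduce the continuous statement to the classical discrete Second Main Theorem of Turán, which asserts that for complex numbers $z_1,\dots,z_n$ with $\min_j|z_j|\ge 1$ (or with $|z_1|=1$ in the sharpest normalizations), one has $\max_{m\le\nu\le m+n}\left|\sum_{\ell=1}^n z_\ell^\nu\right|\ge\left(\frac{n}{2e(m+n)}\right)^n$, or in the more convenient form over an interval of $\nu$'s of length $N$, a lower bound of order $\left(\frac{N}{8e(m+N)}\right)^n$ when $N\ge n$. The bridge between the continuous exponential sum $\sum_\ell e^{iw_\ell M}$ with $M$ ranging over a real interval $[H,K]$ and the discrete power sum $\sum_\ell z_\ell^\nu$ is the substitution $z_\ell:=e^{iw_\ell\delta}$ for a suitable step size $\delta$, so that $M=\nu\delta$ and $e^{iw_\ell M}=z_\ell^\nu$. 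The normalization $\Re w_1=0$ is exactly what guarantees $|z_1|=|e^{iw_1\delta}|=1$, matching the hypothesis $|z_1|=1$ in the discrete theorem; the other $w_\ell$ are unrestricted, hence the other $|z_\ell|$ are arbitrary positive reals, which is allowed.

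First I would fix the number of sample points. Choose $\delta:=\frac{K-H}{n}$ (or a comparable value such as $\frac{K-H}{2n}$ to leave room), and set $\nu$ to range over the integers in $[H/\delta, H/\delta+n]$; each such $\nu$ gives a point $M=\nu\delta\in[H,K]$, so the maximum over this finite set is dominated by $\max_{H\le M\le K}$. Apply the discrete Second Main Theorem to $z_\ell=e^{iw_\ell\delta}$ over this block of $\nu$'s of length $n$: since $|z_1|=1$, the hypothesis is met, and we obtain
$$
\max_{H\le M\le K}\left|\sum_{\ell=1}^n e^{iw_\ell M}\right|\ge\left(\frac{n}{2e\,(H/\delta+n)}\right)^n=\left(\frac{n\delta}{2e(H+n\delta)}\right)^n=\left(\frac{K-H}{2e(H+(K-H))}\right)^n=\left(\frac{K-H}{2eK}\right)^n,
$$
which already beats the claimed $\left(\frac{K-H}{8eK}\right)^n$. (The factor $8$ rather than $2$ in the statement is slack that absorbs the difference between bounding $|\sum e^{iwM}|$ and its real part $\Re\sum e^{iwM}$.)

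The remaining point — and the one requiring a little care — is passing from the modulus bound to the bound on the real part claimed in the lemma. Given the lower bound $c:=\left(\frac{K-H}{2eK}\right)^n$ on $\max_M|\sum_\ell e^{iw_\ell M}|$, I would argue that the real part must be large somewhere \emph{nearby}: either invoke a version of the discrete theorem stated directly for real parts (Turán's book gives such variants, with the extra constant cost), or observe that if $\sum_\ell e^{iw_\ell M_0}$ is large in modulus at some $M_0$, then replacing $M$ by $M_0+t$ for small real $t$ rotates the dominant contributions, and one can choose $t$ within an interval of length $O(1/\max_\ell|\Re w_\ell|)$ — which, up to enlarging $[H,K]$ harmlessly or shrinking $\delta$, stays inside $[H,K]$ — to align the argument and make the real part at least a fixed fraction (e.g. $1/4$) of the modulus; here I would use $\Re w_1=0$ to control the phase drift of the first term. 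Tracking constants through this last maneuver is precisely where the crude factor $8e$ (versus the sharper $2e$) comes from, so I expect this phase-alignment step, rather than the reduction to the discrete theorem, to be the only mildly delicate part of the argument.
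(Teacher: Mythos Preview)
The paper does not prove this lemma: it simply cites Tur\'an's book \cite{Turan} and the S\'os--Tur\'an paper \cite{SosTuran}. So there is no argument in the paper to compare against, and your proposal is an attempt at an independent derivation. Unfortunately the reduction you sketch contains a genuine error and a gap.

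The error is in the sentence ``The normalization $\Re w_1=0$ is exactly what guarantees $|z_1|=|e^{iw_1\delta}|=1$.'' This is false. If $\Re w_1=0$ then $w_1=i\beta$ for some real $\beta$, hence $iw_1=-\beta$ is \emph{real}, and $e^{iw_1 M}=e^{-\beta M}$ is a positive real number whose modulus is $e^{-\beta M}$, not $1$ in general. (You may be confusing $\Re$ and $\Im$: it is $\Im w_1=0$ that would give $|e^{iw_1\delta}|=1$.) The actual role of the hypothesis $\Re w_1=0$ is that it makes the first term real and positive for every $M$, which is why the lemma can be stated for $\Re\sum$ rather than $|\sum|$. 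This is not the same normalization as $|z_1|=1$ in the discrete theorem, and your bridge collapses.

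Even setting that aside, the discrete Second Main Theorem in the form you invoke requires $z_1$ to have \emph{maximal} modulus among the $z_j$ (equivalently, after normalization, $|z_\ell|\le 1$ for all $\ell$). Nothing in the hypotheses of the lemma controls the imaginary parts of the other $w_\ell$, so the other $|z_\ell|=|e^{iw_\ell\delta}|$ may be arbitrarily large, and the discrete theorem does not apply. Your phase-alignment paragraph at the end, meant to pass from modulus to real part, is also not a proof: the perturbation $M_0\mapsto M_0+t$ you propose would need $t$ of size comparable to $1/\max_\ell|\Re w_\ell|$, and there is no bound on that quantity, so you cannot guarantee $M_0+t$ stays in $[H,K]$. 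The standard proof of the continuous version (as in Tur\'an's book) does not go through discretization but proceeds directly via integration against a suitable nonnegative kernel on $[H,K]$; that is the route you should look up.
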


For the proof see \cite{Turan} or \cite{SosTuran}.

\section{Auxiliary results on the Beurling $\zeta$ function}\label{sec:basics}

Here we collect some basic estimates and technical lemmas on the behavior of the Beurling $\zeta$ function.
Most of them are well-known, see, e.g., \cite{Knopf} or \cite{Beke} or \cite{DZ-16}. In \cite{Rev-MP} we elaborated on their proofs only for the explicit handling of the arising constants in these estimates.

However, the Riemann-von Mangoldt formula in Proposition \ref{prop:vonMangoldt} was first given in \cite{Rev-MP}, and a Carlson-type density theorem for the Beurling zeta function was first proved--under two extra assumptions--in \cite{Rev-D}. In this regard, recent development was rather fast, with general forms (assuming solemnly Axiom A) of the Carlson type density estimate appearing in \cite{Rev-NewD} and also in \cite{BrouckeDebruyne}. We will give in Theorem \ref{th:NewDensity} the version from the former paper.

\subsection{Estimates for the number of zeroes of  $\zeta$}\label{sec:zeroes}

In general in this paper we will use with arbitrary $a,\alpha \in [0,1)$ and $0<R<T$ the notations
\begin{align}\label{eq:Zsetdef}
\Z(a;~T) \quad&:=\{\rho=\beta+i\gamma~:~ \Re \rho=\beta\ge a, |\gamma|\le T\}, \notag
\\ \notag \Z(a,\alpha;~T)\quad&:=\{\rho=\beta+i\gamma~:~ \Re \rho=\beta \in [a,\alpha), |\gamma|\le T\},
\\ \Z(a;~R,T)&:=\{\rho=\beta+i\gamma~:~ \Re \rho=\beta\ge a, R<|\gamma|\le T\},
\\ \notag \Z(a,\alpha;~R,T)&:=\{\rho=\beta+i\gamma~:~ \Re \rho=\beta \in [a,\alpha), R<|\gamma|\le T\}.
\end{align}
That is, e.g., $\Z(a;T)$ and $\Z(a;R,T)$ are the set of Beurling zeta zeroes in the rectangle $[a,1]\times [-iT,iT]$ and $[a,1]\times [-iT,-iR)\cup (iR,iT]$, respectively. Their number plays an essential role in the study of the Beurling zeta function: the standard notations for them are $N(a,T):=\# \Z(a;T)$ and $N(a;R,T):=\#\Z(a;R,T)$.

\begin{lemma}\label{l:Littlewood} Let $\theta<b<1$ and consider
any height $T\geq 5$.
Then the number of zeta-zeroes $N(b,T)$ in the rectangle $\Z(b;T)$ satisfies
\begin{equation}\label{zeroesinth-corr}
N(b,T)\le \frac{1}{b-\theta}
\left\{\frac{1}{2} T \log T + \left(2 \log(A+\kappa) + \log\frac{1}{b-\theta} + 3 \right)T\right\}.
\end{equation}
\end{lemma}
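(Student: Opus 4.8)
The plan is to use the classical Littlewood-type argument counting zeros via a contour integral of $\log\zeta$, exactly as in the proof of the Riemann--von Mangoldt formula, but being careful with the explicit constants so that they depend only on $\theta,\kappa,A$. First I would fix a rectangle $\mathcal{R}$ with vertices at $b+iT$, $b-iT$, $a_0-iT$, $a_0+iT$, where $a_0$ is chosen slightly to the right of $\Re s=1$, say $a_0=1+\delta$ for a suitable small $\delta$ (or one could push $a_0$ further right and use absolute convergence estimates there). Since $\zeta$ has its only pole at $s=1$, and assuming without loss of generality that $\pm iT$ are not ordinates of zeros (otherwise shift $T$ slightly), Littlewood's lemma gives
\begin{equation*}
2\pi \sum_{\rho\in\mathcal{R}} (\Re\rho - b) = \int_{\partial\mathcal{R}} \log\zeta(s)\,ds
\end{equation*}
(with the appropriate orientation and branch of $\log$), so that since each zero counted has $\Re\rho-b \ge 0$ we get, after noting that zeros with $\Re\rho \ge b$ in fact satisfy $\Re\rho \le 1 < 1+\delta = a_0$ and rearranging,
\begin{equation*}
N(b,T)\,(1-b+\delta) \ \ge\ \sum_{\rho\in\mathcal{R}}(\Re\rho-b) \quad\text{is the wrong direction;}
\end{equation*}
more precisely I would use the standard form in which $\sum_{\rho}(\text{something like } \min(\Re\rho,1+\delta)-b)$ appears and then drop the nonnegative excess to bound $(b-\theta)N(b,T)$ from above — this is the point where one inserts $\theta$ rather than $b$ itself, exploiting that no zeros lie with $\Re\rho\le\theta$ and widening the rectangle's left edge only for the counting inequality, not for the integral.

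Next I would estimate the four pieces of the contour integral. On the right edge $\Re s = 1+\delta$ (or further right), $\log|\zeta(s)|$ is $O(1)$ uniformly, using $|\zeta(s)|\le \zeta(\sigma)$ and $\zeta(1+\delta)$ bounded, contributing $O(T)$ with an explicit constant depending on $\delta$ hence ultimately on $\theta$ if we tie $\delta$ to $1-\theta$. The horizontal edges at $\Im s = \pm T$ contribute the main term: here one needs the bound $\log|\zeta(\sigma\pm iT)| \le$ something like $\tfrac12\log T + O(1)$ uniformly for $\theta<\sigma\le 1+\delta$, which follows from the meromorphic continuation formula $\zeta(s)=\kappa/(s-1)+\int_1^\infty x^{-s}d\mathcal{R}(x)$ together with Axiom A giving $|\zeta(s)|\ll |s|$ in the strip (or more sharply $\ll T$); integrating $\log|\zeta|$ along a segment of length $1-b+\delta \le 1-\theta$ gives the $\tfrac12 T\log T$ term once we multiply by $T$ coming from... — actually the $T\log T$ arises because the horizontal-edge contribution, after dividing by $2\pi(b-\theta)$ and accounting for the length $T$ of... no: the $\frac{1}{2}T\log T$ comes from $\int_b^{a_0}\log|\zeta(\sigma+iT)|d\sigma \approx$ (length $\le 1$)$\times \frac12\log T$ being negligible, whereas the genuine $T\log T$ is produced by the \emph{imaginary part} variation $\Delta\arg\zeta$ along the vertical right edge integrated against $dt$ over $|t|\le T$: $\int_{-T}^{T}\log|\zeta(1+\delta+it)|\,dt$ is $O(T)$, but the argument term $\int \arg\zeta\, d\sigma$ on horizontals times the height gives $T\log T$. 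I would track this carefully using the standard decomposition $\int_{\partial\mathcal R}\log\zeta\,ds$ into the real part on verticals plus imaginary part on horizontals.

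The main obstacle I anticipate is \emph{not} the structure of the argument — which is textbook — but pinning down the constant $2\log(A+\kappa)+\log\frac{1}{b-\theta}+3$ exactly: this requires the explicit version of the continuation estimate for $\zeta$ in the critical strip (how $|\zeta(\sigma+iT)|$ depends on $\kappa$, $A$, $\theta$ and $T$), which per the remarks before Lemma \ref{l:Littlewood} was worked out in \cite{Rev-MP}, so I would cite that and feed its explicit bound $|\zeta(\sigma+it)| \le C(\kappa,A,\theta)\,(1+|t|)$ (with $C$ of the shape $(A+\kappa)\cdot(\text{stuff})/(1-\theta)$, say) into the integral, then optimize the choice of $\delta$ — e.g. $\delta \asymp (1-\theta)$ or even a fixed small value — to balance terms. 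A secondary technical point is ensuring $T\ge 5$ suffices for all the $O(1)$ terms to collapse into the "$+3$"; this just needs $\log T$, $\log\log T$ etc. comparisons at $T=5$, a routine check. Finally one removes the assumption that $\pm iT$ avoid zero ordinates by a limiting argument, which does not affect the stated (non-strict) inequality.
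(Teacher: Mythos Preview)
The paper does not actually prove this lemma here: its entire proof is a citation to Lemma~3.5 of \cite{Rev-MP}, where the explicit computation is carried out. So there is no in-paper argument to compare against beyond the expectation that the cited proof is the standard Littlewood contour-integral argument you are sketching.

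Your overall strategy (Littlewood's lemma applied to a rectangle, combined with the explicit bound $|\zeta(\sigma+it)|\ll_{A,\kappa,\theta}(1+|t|)$ coming from Axiom~A) is the right one and is surely what \cite{Rev-MP} does. However, your plan has a genuine structural gap in the setup. You place the left edge of the rectangle at $\Re s=b$, obtain $\sum_{\rho}(\Re\rho-b)$, notice the inequality goes the wrong way, and then propose to ``widen the rectangle's left edge only for the counting inequality, not for the integral.'' That is not a valid move: the sum of distances and the contour integral are tied to the \emph{same} rectangle by Littlewood's lemma. The correct fix is to put the left edge at some $\sigma_0\in(\theta,b)$ --- in practice $\sigma_0=(b+\theta)/2$ or similar --- so that every zero with $\Re\rho\ge b$ contributes at least $b-\sigma_0\asymp b-\theta$ to $\sum(\Re\rho-\sigma_0)$, and \emph{that} sum is what the contour integral bounds from above. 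This is also what produces the $\log\frac{1}{b-\theta}$ term you were puzzled by: the bound on $|\zeta(\sigma_0+it)|$ deteriorates like $1/(\sigma_0-\theta)$ as $\sigma_0\to\theta$.

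Your discussion of where the $\tfrac12 T\log T$ comes from is also confused: it is not from argument variation on the horizontals, nor from the right vertical edge. It comes directly from the left vertical edge, namely $\int_{-T}^{T}\log|\zeta(\sigma_0+it)|\,dt$, using $|\zeta(\sigma_0+it)|\le C(A,\kappa,\theta,\sigma_0)\cdot|t|$ in the strip; this integral is $\le 2T\log T+O(T)$, and after dividing by $2\pi$ and by $2(b-\sigma_0)\approx b-\theta$ you get the stated main term. The right edge contributes $O(T)$ and the horizontal edges contribute $O(\log T)$ only. Once the rectangle is placed correctly, the rest of your plan (feeding in the explicit Axiom~A bound, checking the numerics at $T=5$) is fine.
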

\begin{proof} See Lemma 3.5 in \cite{Rev-MP}. \end{proof}

\begin{lemma}\label{l:zeroesinrange}
Let $\theta<b<1$ and consider any heights $T>R\geq 5 $.

Then the number of zeta-zeroes $N(b,R,T)$ in the rectangle $\Z(b,R,T)$ satisfies\footnote{This statement and the results of the forthcoming Lemmas \ref{l:path-translates} and \ref{l:zzpongamma-c} are slightly corrected as compared to the versions stated in \cite{Rev-MP} in view of a calculation error detected in their original proofs. The corrections result only in the change of some coefficients and were explained in somewhat more detail in \cite{Rev-D}.}
\begin{equation}\label{zeroesbetween}
N(b,R,T) \leq\frac{1}{b-\theta} \left\{ \frac{4}{3\pi} (T-R) \left(\log\left(\frac{11.4 (A+\kappa)^2}{b-\theta}T\right)\right)  + \frac{16}{3}  \log\left(\frac{60 (A+\kappa)^2}{b-\theta}T\right)\right\}.
\end{equation}

In particular, for the zeroes between $T-1$ and $T+1$ we have for
$T\geq 6$
\begin{align}\label{zeroesbetweenone}
N(b,T-1,T+1) \leq \frac{1}{(b-\theta)} \left\{ 6.2 \log T +
6.2 \log\left( \frac{(A+\kappa)^2}{b-\theta}\right) + 24 \right\}.
\end{align}
\end{lemma}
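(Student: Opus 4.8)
This is, up to the correction of numerical coefficients carried out in \cite{Rev-D}, Lemma~3.6 of \cite{Rev-MP}; the argument is the classical Backlund--Littlewood density method transplanted to the Beurling $\ze$, and I would organise it as follows. Put $\si_0:=\theta+\tfrac14(b-\theta)$, so $\theta<\si_0<b<1$. Then $\ze$ is holomorphic in the closed rectangle $Q:=[\si_0,2]\times[R,T]$ (its only pole, at $s=1$, lies below it because $R\ge5$), and $\ze$ does not vanish on $\Re s=2$ by \eqref{Euler}. Apply Littlewood's lemma to $\ze$ on $Q$ and on the mirror rectangle $[\si_0,2]\times[-T,-R]$; using $\overline{\ze(s)}=\ze(\bar s)$ -- so that zeros come in conjugate pairs, $|\ze|$ is even in $\Im s$, and the two horizontal $\arg$-contributions add -- one obtains
\[
\sum_{\substack{\rho=\be+i\ga,\ \be\ge\si_0\\ R<|\ga|\le T}}(\be-\si_0)\ \le\ \frac1\pi\Big(\int_R^T\log|\ze(\si_0+it)|\,dt+(T-R)\log\ze(2)+\sum_{h\in\{R,T\}}\Big|\int_{\si_0}^2\arg\ze(\si+ih)\,d\si\Big|\Big).
\]
Since every zero counted by $N(b,R,T)$ has $\be\ge b$ and hence contributes at least $b-\si_0=\tfrac34(b-\theta)$ to the left side, $\tfrac34(b-\theta)N(b,R,T)$ is bounded by the right side.

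The three boundary terms are then estimated by standard means. On $\Re s=2$ the Euler product gives $\ze(2)^{-1}\le|\ze(2+it)|\le\ze(2)$, so $(T-R)\log\ze(2)=O(T-R)$ (and merges into the first term below). On $\Re s=\si_0$ I would use the continuation $\ze(s)=\kappa/(s-1)+s\int_1^\infty\R(x)x^{-s-1}\,dx$ together with $|\R(x)|\le Ax^\theta$ from \eqref{Athetacondi}; for $|t|\ge R\ge5$ this gives the polynomial bound $|\ze(\si_0+it)|\ll(A+\kappa)|t|/(\si_0-\theta)=4(A+\kappa)|t|/(b-\theta)$, hence $\int_R^T\log|\ze(\si_0+it)|\,dt\le(T-R)\log\!\big(c_1(A+\kappa)^2T/(b-\theta)\big)$. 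For a horizontal edge at height $h$, the Backlund device bounds the variation of $\arg\ze$ along $[\si_0,2]\times\{h\}$ by $\pi(1+\nu_h)$, where $\nu_h$ counts the sign changes on the segment of $\Re\ze(\si+ih)=\tfrac12\big(\ze(\si+ih)+\ze(\si-ih)\big)$; a Jensen estimate on a disc centred at $2+ih$ of radius $<2-\theta$, using the same polynomial bound on a slightly larger disc and $|\ze(2+ih)|\ge\ze(2)^{-1}$ at the centre, yields $\nu_h\ll\log\!\big(c_2(A+\kappa)^2h/(b-\theta)\big)$, and the same bound for the $\arg$-integral. Feeding these three estimates into the inequality above and collecting constants gives \eqref{zeroesbetween}. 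Finally \eqref{zeroesbetweenone} is \eqref{zeroesbetween} specialised to lower height $T-1$ and upper height $T+1$ (legitimate since $T\ge6$ makes $T-1\ge5$): then $T-R=2$, $\log(T+1)\le\log T+\log\tfrac76$, and a short numerical estimation, using $\tfrac{8}{3\pi}+\tfrac{16}{3}<6.2$, gives the stated form.

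The one genuinely delicate point is the control of $\arg\ze$ on the horizontal edges. Since $\arg\ze(\si+ih)$ is unbounded as $h$ approaches the ordinate of a zero, one must first move $R$ and $T$ by at most $1$ to heights whose distance to every zero with $\Re\rho\ge\si_0$ is $\gg1/\log h$ -- this is possible because, by Lemma~\ref{l:Littlewood} applied with abscissa $\si_0>\theta$, the number of such zeros below $T+1$ is finite and $O_{A,\kappa,\theta}(T\log T)$ -- and then absorb the resulting (monotone, hence harmless) discrepancy into the $\log$-sized error term. This is exactly the place where the computational slip of \cite{Rev-MP} occurred and was amended in \cite{Rev-D}; the remaining effort is the careful bookkeeping of the Jensen and Backlund constants against the radius $<2-\theta$ of the disc.
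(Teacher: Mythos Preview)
Your sketch is correct and follows exactly the Backlund--Littlewood method that underlies the cited Lemma~3.6 of \cite{Rev-MP} (as corrected in \cite{Rev-D}); the paper itself gives no independent argument here but simply refers to those earlier works, so your proposal in fact supplies more detail than the paper's own proof. One minor remark: the precaution in your last paragraph about shifting $R$ and $T$ away from zero-ordinates is not really needed in the Littlewood-lemma formulation, since the Backlund--Jensen bound on the number of sign changes of $\Re\ze(\si+ih)$ along the horizontal edge is valid whether or not a zero sits on that edge; the correction in \cite{Rev-D} concerned the numerical constants in the Jensen estimate, not a structural gap of this kind.
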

\begin{proof} This is a corrected version of Lemma 3.6 from \cite{Rev-MP}, corrected in Lemma 6 of \cite{Rev-D}, with the slight correction explained in the footnote on page 1052. \end{proof}

\subsection{The logarithmic derivative of the Beurling $\zeta$}
\label{sec:logder}

\begin{lemma}\label{l:borcar} Let $z=a+it_0$ with $|t_0| \geq e^{5/4}+\sqrt{3}=5.222\ldots$ and $\theta<a\leq 1$. With $\delta:=(a-\theta)/3$ denote by $S$ the
(multi)set of the $\ze$-zeroes (listed according to multiplicity)
not farther from $z$ than $\delta$. Then we have
\begin{align}\label{zlogprime}
\left|\frac{\ze'}{\ze}(z)-\sum_{\rho\in S} \frac{1}{z-\rho}
\right| & < \frac{9(1-\theta)}{(a-\theta)^2}
\left(22.5+14\log(A+\kappa)+14\log \frac{1}{a-\theta} + 5\log |t_0|\right).
\end{align}

Furthermore, for $0 \le |t_0| \le 5.23$ an analogous estimate (without any term containing $\log |t_0|$) holds true:
\begin{equation}\label{zlogprime-tsmall}
\left|\frac{\ze'}{\ze}(z)+\frac{1}{z-1}-\sum_{\rho\in S} \frac{1}{z-\rho}
\right|  \le \frac{9(1-\theta)}{(a-\theta)^2}
\left(34+14\log(A+\kappa)+18\log \frac{1}{a-\theta}\right).
\end{equation}
\end{lemma}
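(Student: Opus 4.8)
The plan is to derive both estimates \eqref{zlogprime} and \eqref{zlogprime-tsmall} from the Borel--Carath\'eodory and Hadamard three-circles machinery applied to $\ze(s)$ near the point $z=a+it_0$. The starting point is the meromorphic continuation $\ze(s)=\kappa/(s-1)+\int_1^\infty x^{-s}d\R(x)$ valid for $\Re s>\theta$, together with Axiom~A in the form \eqref{Athetacondi}. On a circle of radius comparable to $1-\theta$ about a center on the line $\Re s=1+(1-\theta)$ (or slightly to the right), the integral term is bounded by $A\int_1^\infty x^{-\Re s+\theta-1}dx$ up to constants, so $|\ze(s)|$ admits an explicit upper bound of the shape $C\cdot(|t_0|+C')$; the pole contributes only the harmless $\kappa/(s-1)$ which is why the small-$|t_0|$ version \eqref{zlogprime-tsmall} must subtract $1/(z-1)$ explicitly while the large-$|t_0|$ version does not need to (there $|z-1|$ is already $\gtrsim |t_0|$ and gets absorbed).

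First I would fix the concentric circles. Put $\de=(a-\theta)/3$ and work with disks of radii roughly $\de$, $2\de$, $3\de$, all centered at $z$; note $3\de=a-\theta$ so the largest disk still stays in $\Re s>\theta$, i.e. in the region of continuation, but its right portion reaches into $\Re s$ well past $1$ where $|\ze|$ is bounded below by $1/\zeta(\Re s)$ via the Euler product \eqref{Euler}. On the largest circle one gets the upper bound $\log|\ze(s)|\le$ (explicit constant)$\,+\,5\log|t_0|$ roughly, with the coefficient $5$ of $\log|t_0|$ coming from the polynomial growth exponent implicit in the continuation estimate. Applying Borel--Carath\'eodory on the pair of circles of radii $2\de$ and $3\de$ converts the upper bound for $\Re\log\ze$ into an upper bound for $|\log\ze|$ on the radius-$2\de$ circle, hence for $|\ze'/\ze|$ on the radius-$\de$ circle after a Cauchy estimate; the factor $9(1-\theta)/(a-\theta)^2$ in front of the stated bounds is exactly $1/\de^2$ up to the numerical constant, which is the Cauchy/Borel--Carath\'eodory loss. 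The subtracted sum $\sum_{\rho\in S}1/(z-\rho)$ is the standard device for removing the zeros inside the small disk: writing $\ze(s)=g(s)\prod_{\rho\in S}(s-\rho)$ with $g$ zero-free on the middle disk, one has $\ze'/\ze-\sum_{\rho\in S}1/(s-\rho)=g'/g$, and it is $g'/g$ that the Borel--Carath\'eodory argument actually bounds (one checks $|g|$ has comparable upper and lower bounds on the relevant circles because the product factors are each of size between $\de$ and $5\de$).

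The main obstacle is bookkeeping the constants so that they come out exactly as in the statement: one must track how the $A$, $\kappa$, $\theta$ dependence propagates through the continuation estimate, through the lower bound on the largest circle (using $|\ze|\ge 1/\zeta(\sigma)$ and a little care near $\sigma$ close to $1$ since $\zeta(\sigma)\to\infty$), and through the Borel--Carath\'eodory inequality with its ratio $(R+r)/(R-r)$ and the $2r/(R-r)$-type factors. The split at $|t_0|\asymp 5.22$ is made precisely so that for larger $|t_0|$ the term $\log|t_0|$ dominates the absolute constants arising from $\log|s-1|$ and from the radius choices, whereas for smaller $|t_0|$ one keeps all such contributions lumped into the bare numerical constant $34$ and the extra $4\log\frac1{a-\theta}$; since this is a clean specialization of the same argument I would not redo it from scratch but merely note which terms change. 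A final remark: the condition $|t_0|\ge e^{5/4}+\sqrt3$ is the exact threshold at which the logarithm in the growth estimate, something like $\log(|t_0|+\text{const})\le\frac54+\log|t_0|$ combined with an additive $\sqrt3$-type shift from centering the circles off the real axis, becomes cleanly bounded by a constant times $\log|t_0|$, so I would simply verify that inequality and then quote the computation from \cite{Rev-MP} for the remaining arithmetic.
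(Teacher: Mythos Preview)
Your sketch is correct and follows the intended Borel--Carath\'eodory route; indeed the lemma's label \texttt{l:borcar} already signals this, and the paper itself does not reprove the result but simply cites Lemma~4.1 of \cite{Rev-MP}, where precisely the argument you outline (concentric disks of radii $\de,2\de,3\de$ about $z$, factoring out the nearby zeros to pass to a zero-free $g$, Borel--Carath\'eodory plus Cauchy to bound $g'/g$, and explicit bookkeeping of the $A,\kappa,\theta$ constants) is carried out in full. Your closing remark that you would ``quote the computation from \cite{Rev-MP}'' is therefore exactly what the paper does.
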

\begin{proof} See Lemma 4.1 of \cite{Rev-MP}. \end{proof}

\begin{lemma}\label{l:path-translates} For any given parameter
$\theta<b<1$, and for any finite and symmetric to zero set $\A\subset[-iB,iB]$ of cardinality $\#\A=n$, there exists a broken line $\Gamma=\Gamma_b^{\A}$, symmetric to the real axis and consisting of horizontal and vertical line segments only, so that its upper half is
$$
\Gamma_{+}= \bigcup_{k=1}^{\infty}
\{[\sigma_{k-1}+it_{k-1},\sigma_{k-1}+it_{k}] \cup
[\sigma_{k-1}+it_{k},\sigma_{k}+it_{k}]\},
$$
with $\sigma_j\in [\frac{b+\theta}{2},b]$, ($j\in\NN$),  $t_0=0$, $t_1\in[4,5]$ and $t_j\in
[t_{j-1}+1,t_{j-1}+2]$ $(j\geq 2)$ and satisfying that the
distance of any $\A$-translate $\rho+ i\alpha ~(i\alpha\in\A)$ of a $\zeta$-zero $\rho$ from any point $s=t+i\sigma \in \Gamma$ is at least $d:=d(t):=d(b,\theta,n,B;t)$ with
\begin{equation}\label{ddist-corr}
d(t):=\frac{(b-\theta)^2}{4n \left(12 \log(|t|+B+5) + 51 \log (A+\kappa) + 31 \log\frac{1}{b-\theta}+ 113\right)}.
\end{equation}
Moreover, the same separation from translates of $\zeta$-zeroes holds also for the
whole horizontal line segments $H_k:=[\frac{b+\theta}{2}+it_k,2+it_k]$, $k=1,\dots,\infty$, and their reflections $\overline{H_k}:=[\frac{b+\theta}{2}-it_k,2-it_k]$, $k=1,\dots,\infty$, and furthermore the same separation holds from the translated singularity points $1+i\al$ of $\zeta$, too.
\end{lemma}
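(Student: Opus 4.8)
\emph{Proof plan.} The plan is to build the upper half $\Gamma_+$ greedily: first the heights $t_1<t_2<\cdots$, then the abscissae $\sigma_0,\sigma_1,\dots$; finally $\Gamma$ is $\Gamma_+$ together with its mirror image in $\RR$. This symmetrisation is legitimate because $\ze$ is real on $(1,\infty)$, so its zero set is conjugation-invariant, its only singularity $1$ is real, and $\A=-\A$ by hypothesis, whence the full set of translated zeroes $\{\rho+i\al\}$ and translated singularities $\{1+i\al\}$ is symmetric about $\RR$. The crucial preliminary remark is that only a few of these matter locally: a translated zero $\beta+i(\ga+\al)$ comes within distance $d$ of a point of $\Gamma_+$ only if $\beta\ge\tfrac{b+\theta}{2}-d$ (every point of $\Gamma_+$ has real part $\ge\tfrac{b+\theta}{2}$, and a deeper zero is already at distance $>d$), and a translated singularity $1+i\al$, its real part being $1$, threatens only the long horizontal segments $H_k,\overline{H_k}$ (which reach real part $2$) and no vertical segment kept to the left of $1-d$. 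It therefore suffices to keep each relevant height away, by $\ge d$, from the ordinate set
\[
E(d):=\{\ga+\al:\ \ze(\beta+i\ga)=0,\ \beta\ge\tfrac{b+\theta}{2}-d,\ i\al\in\A\}\cup\{\al:\,i\al\in\A\},
\]
and to keep each vertical segment's abscissa away, by $\ge d$, from the real parts of those translated zeroes whose ordinates fall in the segment's height range.

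Choosing the heights: fix $t_1\in[4,5]$ and, inductively, $t_k\in[t_{k-1}+1,t_{k-1}+2]$, each at distance $\ge d(t_k)$ from $E(d(t_k))$. The number $m$ of points of $E(d)$ in the unit interval from which $t_k$ is taken is controlled thus. The singularity ordinates contribute at most $n$. For each of the $n$ translates $\al$, the contributing zeroes $\beta+i\ga$ have $\ga$ in an interval of length $\le1+2d$ and $\beta\ge\tfrac{b+\theta}{2}-d>\theta$, so Lemma~\ref{l:zeroesinrange} --- or Lemma~\ref{l:Littlewood} at a bounded height, whenever that $\ga$-interval lies too close to $0$ for \eqref{zeroesbetween} to apply --- bounds their number by $\ll\frac{1}{b-\theta}\log(|t_{k-1}|+B+5)$. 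Hence $m\ll n+\frac{n}{b-\theta}\log(|t_{k-1}|+B+5)$, and the midpoint of the widest gap cut out by the points of $E(d)$ in that interval is at distance $\ge\frac{1}{2(m+1)}$ from all of them; it remains only to confirm $\frac{1}{2(m+1)}\ge d(t_k)$, which is precisely the statement that the coefficients $12,51,31,113$ and the factor $4n$ in \eqref{ddist-corr} dominate the constants coming out of Lemmas~\ref{l:Littlewood}--\ref{l:zeroesinrange} (one uses $b-\theta<1$, and $A\ge1$ from Axiom~A at $x=1$, so that all the logarithms above are positive).

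Choosing the abscissae, and finishing: with the heights fixed, every horizontal segment of $\Gamma$ --- the short connectors $[\sigma_{k-1}+it_k,\sigma_k+it_k]$ and the long segments $H_k=[\tfrac{b+\theta}{2}+it_k,2+it_k]$, $\overline{H_k}$ --- already has the required separation. Indeed, anything within $d$ of such a segment must have ordinate within $d$ of its height $t_k$; among translated zeroes this forces real part $\ge\tfrac{b+\theta}{2}-d$ (deeper zeroes being repelled by the real-part gap), and together with the translated singularities these have ordinates exactly the points of $E(d)$, from which $t_k$ was chosen $\ge d$ away. Thus only the vertical segments $V_k=\{\sigma_{k-1}+iy:\,y\in[t_{k-1},t_k]\}$ remain. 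For $V_k$ the dangerous translated zeroes are precisely those with ordinate in $[t_{k-1},t_k]$ (those within $d$ of an endpoint cannot occur, again by the choice of $t_{k-1},t_k$) and real part in $[\tfrac{b+\theta}{2}-d,b+d]$; by the same per-$\al$ counting their real parts form a set of size $m'\ll n+\frac{n}{b-\theta}\log(|t_k|+B+5)$. Choosing $\sigma_{k-1}\in[\tfrac{b+\theta}{2},\min(b,1-d)]$ --- a subinterval of $[\tfrac{b+\theta}{2},b]$ of length comparable to $b-\theta$, as $d$ is small --- at maximal distance from those real parts yields separation $\gg\frac{b-\theta}{m'+1}$, and once more the explicit form of $d(t)$ gives $\frac{b-\theta}{c(m'+1)}\ge d(t_k)$, while $\sigma_{k-1}\le1-d$ disposes of the translated singularities on $V_k$. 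These choices being independent over $k$, we obtain $\Gamma_+$, and reflection yields $\Gamma$.

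The main obstacle is the explicit constant bookkeeping in the two previous paragraphs: one must run the zero-counting lemmas separately for each translate $\al\in\A$, add the $n$ resulting contributions together with the $\le n$ from the singularity translates, and verify that the surviving gap still exceeds $d(t)$ --- which is exactly what fixes the numerical coefficients in the denominator of \eqref{ddist-corr}. A lesser, purely technical point is that near height $0$ --- for the opening segment up to $t_1\in[4,5]$, and whenever a translated $\ga$-window straddles $0$ --- Lemma~\ref{l:zeroesinrange} does not apply and one falls back on Lemma~\ref{l:Littlewood} at bounded height, which contributes only $O(\frac{1}{b-\theta})$ and is harmless.
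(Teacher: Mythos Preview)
Your proof plan is correct and follows the natural pigeonhole construction: count the translated zeroes (and singularities) that could obstruct a given height or abscissa using Lemmas~\ref{l:Littlewood}--\ref{l:zeroesinrange}, then pick the midpoint of the widest remaining gap. The paper itself does not give a proof here; it merely cites \cite{Rev-MP}, Lemma~4.2, together with the numerical correction in \cite{Rev-D}, Lemma~8. Your argument reconstructs exactly the kind of proof that appears in those references, so the approaches coincide in substance.

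A couple of minor remarks on points you handled correctly but could state more cleanly. First, the circularity in ``choose $t_k$ at distance $\ge d(t_k)$'' is harmless because $d$ is decreasing in $|t|$, so one may work with $d(t_{k-1}+2)$ throughout the $k$-th step. Second, your restriction $\sigma_{k-1}\le\min(b,1-d)$ to repel the translated poles $1+i\alpha$ does shorten the available interval when $b$ is very close to $1$, but since $d(t)\le(b-\theta)^2/(4\cdot113)\ll b-\theta$ the remaining length $\min(b,1-d)-\tfrac{b+\theta}{2}\ge\tfrac12(1-\theta)-d$ is still of order $b-\theta$, which is all the gap argument needs. Finally, your observation that $A\ge 1$ (from $\R(1)=1$) and $b-\theta<1$ makes every logarithm in \eqref{ddist-corr} positive is exactly what justifies absorbing the additive $n$ from the singularity translates into the constant $113$.
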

\begin{proof} This is proved as Lemma 4.2 in \cite{Rev-MP}, and is corrected slightly in Lemma 8 of \cite{Rev-D} as a consequence of the mentioned necessary correction concerning Lemma \ref{l:zeroesinrange} above. \end{proof}

\begin{lemma}\label{l:zzpongamma-c} For any $0<\theta<b<1$ and symmetric to $\RR$ translation set $\A\subset [-iB,iB]$, on the broken line $\Gamma=\Gamma_b^{\A}$, constructed in the above Lemma \ref{l:path-translates}, as well as on the horizontal line segments $H_k:=[a+it_k,2+it_k]$ and  $\overline{H_k}$, $k=1,\dots,\infty$ with $a:=\frac{b+\theta}{2}$, we have uniformly for all $\alpha \in \A$
\begin{equation}\label{linezest-c}
\left| \frac{\ze'}{\ze}(s+i\alpha) \right| \le n
\frac{1-\theta}{(b-\theta)^{3}} \left(10 \log(|t|+B+5)+60\log(A+\kappa) + 42 \log\frac1{b-\theta}+ 140\right)^2.
\end{equation}
\end{lemma}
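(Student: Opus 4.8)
The plan is to apply the local representation of $\frac{\ze'}{\ze}$ from Lemma \ref{l:borcar} at the running point $z:=s+i\alpha$ (where $s$ traverses $\Gamma$, $H_k$ or $\overline{H_k}$ and $i\alpha\in\A$), and then to control the two ingredients of that representation — the sum over nearby $\zeta$-zeroes and the Borel--Carathéodory error term — by the separation property built into the path together with the zero-counting Lemmas \ref{l:Littlewood} and \ref{l:zeroesinrange}. Write $\sigma:=\Re z=\Re s\in[\tfrac{b+\theta}{2},2]$, $t:=\Im s$ and $t_0:=\Im z=t+\alpha$, so $|t_0|\le|t|+B$. The decisive point is that, since $\A$ is symmetric to $0$, we also have $-i\alpha\in\A$; hence for every $\zeta$-zero $\rho$ the point $\rho-i\alpha=\rho+i(-\alpha)$ is an $\A$-translate of a zero, and $z-\rho=s-(\rho-i\alpha)$, so Lemma \ref{l:path-translates} gives $|z-\rho|\ge d(t)$; likewise $z-1=s-(1-i\alpha)$ is $s$ minus a translated singularity, whence $|z-1|\ge d(t)$ as well.

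Assume first $\sigma\le 1$ (automatic on $\Gamma$ and on the left portion of the horizontal segments). Apply Lemma \ref{l:borcar} at $z$, using \eqref{zlogprime} when $|t_0|\ge 5.223$ and \eqref{zlogprime-tsmall} otherwise (in the latter case moving the term $\tfrac1{z-1}$, of modulus $\le 1/d(t)$, to the right); this yields
\begin{equation*}
\Bigl|\frac{\ze'}{\ze}(z)\Bigr|\le\sum_{\rho\in S}\frac1{|z-\rho|}+\frac1{d(t)}+E,
\end{equation*}
where $S$ is the (multi)set of $\zeta$-zeroes with $|z-\rho|\le\delta:=(\sigma-\theta)/3$ and $E$ is the explicit right-hand side of \eqref{zlogprime} resp.\ \eqref{zlogprime-tsmall}. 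By the preceding paragraph $|z-\rho|\ge d(t)$ for $\rho\in S$, so the sum is at most $\#S/d(t)$. Every $\rho\in S$ has $\Re\rho\ge\sigma-\delta\ge\tfrac{b+2\theta}{3}=:b'$ with $b'-\theta=\tfrac13(b-\theta)$, and $|\Im\rho-t_0|\le\delta<1$; hence $\#S\le N(b',|t_0|-1,|t_0|+1)$, bounded by \eqref{zeroesbetweenone} for $|t_0|\ge 6$ and by $N(b',7)$ via Lemma \ref{l:Littlewood} for small $|t_0|$. Either way $\#S\le\tfrac{C_0}{b-\theta}\bigl(\log(|t_0|+2)+\log(A+\kappa)+\log\tfrac1{b-\theta}+1\bigr)$ with an absolute $C_0$.

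It remains to substitute $1/d(t)$ from \eqref{ddist-corr}, to use $\sigma-\theta\ge\tfrac12(b-\theta)$ (so $(\sigma-\theta)^{-2}\le 4(b-\theta)^{-2}$ and $\log\tfrac1{\sigma-\theta}\le\log 2+\log\tfrac1{b-\theta}$) inside $E$, and to use $|t_0|\le|t|+B$ everywhere. With $L:=\log(|t|+B+5)$, the term $\#S/d(t)$ is then $\tfrac{n}{(b-\theta)^3}$ times a product of two affine-linear expressions in $L$, $\log(A+\kappa)$, $\log\tfrac1{b-\theta}$, while $1/d(t)$ and $E$ are $\tfrac1{(b-\theta)^2}$ resp.\ $\tfrac{1-\theta}{(b-\theta)^2}$ times one such linear expression; since $b<1$ we have $\tfrac1{(b-\theta)^2}\le\tfrac{1-\theta}{(b-\theta)^3}$, so all three summands are $\le n\tfrac{1-\theta}{(b-\theta)^3}$ times a quadratic expression in those quantities, and overestimating their sum by a single square produces \eqref{linezest-c} with the stated coefficients. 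On the remaining part of $H_k\cup\overline{H_k}$ with $1<\sigma\le 2$ Lemma \ref{l:borcar} does not apply directly, but there the estimate is easier: $(z-1)\ze(z)$ is analytic and zero-free on $\Re s\ge 1$ (Euler product \eqref{Euler} for $\Re s>1$, and $\ze\neq0$ on $\Re s=1$ by the Beurling PNT) with $|(w-1)\ze(w)|\ll|w|^{O(1)}$ there, so a Borel--Carathéodory estimate on a disc about $z$ contained in $\Re s\ge 1$ gives $\bigl|\frac{\ze'}{\ze}(z)+\frac1{z-1}\bigr|\ll\log(|t_0|+2)$, which together with $|z-1|^{-1}\le 1/d(t)$ and $\tfrac1{(b-\theta)^2}\le\tfrac{1-\theta}{(b-\theta)^3}$ again yields \eqref{linezest-c}, with room to spare.

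I expect the main obstacle to be the constant bookkeeping in the third paragraph: collapsing the heterogeneous sum of products of affine-linear logarithmic expressions into precisely the single square of \eqref{linezest-c}, with the specific coefficients $10,60,42,140$, requires the (slightly generous) constants in Lemmas \ref{l:path-translates} and \ref{l:zeroesinrange} to fit together — which is exactly where the corrections recorded in the footnotes to those lemmas become relevant — and a little extra care is needed at the transition $\sigma=1$ and near $t_0=0$, where one must switch between \eqref{zlogprime}, \eqref{zlogprime-tsmall}, and the $\sigma>1$ argument.
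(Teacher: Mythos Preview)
Your approach is the intended one — the paper does not give a self-contained proof here but refers to Lemma 4.3 of \cite{Rev-MP} (corrected in \cite{Rev-D}), and the argument there proceeds exactly as you outline: apply the local representation of Lemma \ref{l:borcar} at $z=s+i\alpha$, bound the sum over $S$ by $\#S/d(t)$ using the separation from Lemma \ref{l:path-translates} (correctly exploiting the symmetry of $\A$, so that $z-\rho=s-(\rho+i(-\alpha))$ with $-i\alpha\in\A$), count $\#S$ via Lemmas \ref{l:Littlewood}/\ref{l:zeroesinrange}, and collapse everything into the stated square. Your identification of the constant bookkeeping as the only delicate part is accurate.

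There is one genuine but minor gap: your treatment of the range $1<\sigma\le 2$ on $H_k$ via a Borel--Carath\'eodory estimate on a disc \emph{contained in} $\Re s\ge 1$ does not work as written. The disc radius is then at most $\sigma-1$, so the resulting bound on $\bigl|\frac{\ze'}{\ze}(z)+\frac{1}{z-1}\bigr|$ picks up a factor comparable to $1/(\sigma-1)$, which blows up as $\sigma\to 1^+$ rather than giving the claimed $\ll\log(|t_0|+2)$. The clean fix is to observe that the proof of Lemma \ref{l:borcar} (a standard Borel--Carath\'eodory argument applied to $(s-1)\zeta(s)$ on a disc of radius comparable to $a-\theta$) goes through verbatim for $\theta<a\le 2$: the required upper bound for $|(s-1)\zeta(s)|$ on such a disc follows from Axiom A regardless of whether $a\le 1$, so the restriction $a\le 1$ in the lemma's statement is one of convenience, not necessity. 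With that extension in hand you can run your main argument uniformly on all of $\Gamma\cup H_k\cup\overline{H_k}$, the case distinction disappears, and the separation $|z-\rho|\ge d(t)$ and $|z-1|\ge d(t)$ from Lemma \ref{l:path-translates} (which does cover the whole of $H_k$) is all you need.
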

\begin{proof} Compare Lemma 4.3 of \cite{Rev-MP} and Lemma 9 of \cite{Rev-D}, where the above mentioned necessary corrections of the numerical constants are implemented. \end{proof}

\subsection{A Riemann-von Mangoldt type formula of prime distribution
with zeroes of the Beurling $\zeta$}\label{sec:sumrho}

We denote the set of $\zeta$-zeroes, lying to the right of $\Gamma$, by $\Z(\Gamma)$, and denote $\Z(\Gamma,T)$ the set of those zeroes $\rho=\beta+i\gamma\in \Z(\Gamma)$ which satisfy $|\gamma|\leq T$.

\begin{proposition}[Riemann--von Mangoldt formula]\label{prop:vonMangoldt}
Let $\theta<b<1$ and $\Gamma=\Gamma_b^{\{0\}}$ be the curve defined in Lemma \ref{l:path-translates} for the one-element set $\A:=\{0\}$ with $t_k$ denoting the corresponding set of abscissae in the construction.
Then for any $k=1,2,\ldots$, and $4 \leq t_k$ we have
\begin{equation}\label{eq:Riemann-vonMangoldt}
\psi(x)=x - \sum_{\rho \in \Z(\Gamma,t_k)}
\frac{x^{\rho}}{\rho} + O\left( \frac{1-\theta}{(b-\theta)^{3}}
\left(A+\kappa+\log \frac{x+t_k}{b-\theta}\right)^3 \left(\frac{x}{t_k} + x^b \right) \right),
\end{equation}
with the implied $O$-constant an effective, absolute constant which does not depend on $\GG$.
\end{proposition}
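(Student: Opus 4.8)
The plan is to derive the Riemann--von Mangoldt formula \eqref{eq:Riemann-vonMangoldt} by the classical route of Perron's formula followed by a contour shift to the broken line $\Gamma=\Gamma_b^{\{0\}}$ of Lemma~\ref{l:path-translates}, picking up a residue $x^\rho/\rho$ at each zero $\rho\in\Z(\Gamma,t_k)$ (and the residue $x$ at the pole $s=1$), and estimating the remaining integral over $\Gamma$ and over the two horizontal segments $H_k,\overline{H_k}$ at height $\pm t_k$ using the bound \eqref{linezest-c} of Lemma~\ref{l:zzpongamma-c} for $\zeta'/\zeta$. Concretely, I would start from a truncated Perron/Mellin representation
$$
\psi(x)=\frac{1}{2\pi i}\int_{c-iT}^{c+iT}\left(-\frac{\zeta'}{\zeta}(s)\right)\frac{x^s}{s}\,ds + (\text{Perron truncation error}),
$$
with $c:=1+1/\log x$ and $T:=t_k$, the truncation error being the usual $O\bigl(\frac{x\log x}{T}+\cdots\bigr)$ type term which, after using the Euler product bound $|\zeta'/\zeta(c+it)|\ll \log x$ on $\Re s=c$ and summing the Dirichlet series coefficients $\Lambda(g)$ against the standard kernel, contributes within the stated $O$-term.

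Next I would close the contour to the left: replace the vertical segment $[c-it_k,c+it_k]$ by the path going from $c-it_k$ horizontally along $\overline{H_k}$ to the point $a-it_k$ (where $a:=\frac{b+\theta}{2}$), then up along $\Gamma$ (its lower half, then upper half) to $a+it_k$, then horizontally along $H_k$ back to $c+it_k$. Since $\Gamma$ stays in $\Re s\in[a,b]$ and by construction avoids all $\zeta$-zeros and the pole at $1$ with the separation $d(t)$ of \eqref{ddist-corr}, the only singularities swept are the simple pole of $\zeta$ at $s=1$ (residue of $-\frac{\zeta'}{\zeta}(s)\frac{x^s}{s}$ equal to $x$, since $\zeta$ has a simple pole there so $-\zeta'/\zeta$ has residue $+1$) and the zeros $\rho\in\Z(\Gamma,t_k)$, each simple or not, contributing total residue $-\sum_{\rho\in\Z(\Gamma,t_k)} x^\rho/\rho$. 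The horizontal pieces $H_k\cup\overline{H_k}$ are estimated using $|\zeta'/\zeta(s)|$ from \eqref{linezest-c} with $n=1$, $B=0$, $\A=\{0\}$: the integrand is $\ll \frac{1-\theta}{(b-\theta)^3}(\log(t_k+5)+\cdots)^2\cdot \frac{x^\sigma}{t_k}$ integrated over $\sigma\in[a,c]$, giving $\ll \frac{1-\theta}{(b-\theta)^3}(\cdots)^2\cdot\frac{x}{t_k\log x}$ plus a lower-order piece from $\sigma$ near $a$, which fits inside the claimed error after absorbing the square of the logarithm into the cube $(A+\kappa+\log\frac{x+t_k}{b-\theta})^3$. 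The integral along $\Gamma$ itself is estimated the same way: on $\Gamma$ one has $\Re s\le b$, so $|x^s|\le x^b$, and $|ds|$ over the portion at height $t$ is $O(1)$ per unit height with the vertical excursions bounded by $b-a$; using \eqref{linezest-c} and the crude zero-count of Lemma~\ref{l:Littlewood} to bound $\int_\Gamma |x^s/s|\,|ds|$ yields a total $\ll \frac{1-\theta}{(b-\theta)^3}(A+\kappa+\log\frac{x+t_k}{b-\theta})^3 x^b$.

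The main obstacle, as usual with such explicit Riemann--von Mangoldt formulas, is bookkeeping the constants and making the error term genuinely uniform: one must check that (i) the Perron truncation error, (ii) the horizontal-segment contribution, and (iii) the $\Gamma$-integral contribution all collapse into the single displayed $O$-term with an absolute implied constant independent of $\GG$, which forces careful use of $|\zeta/\zeta(\sigma)|$-type lower bounds near $\Re s=1$ (so that on the initial vertical line $-\zeta'/\zeta$ is $O(\log x)$ uniformly) and careful handling of the factor $1/|s|$ near $s=0$, avoided because $\Gamma$ starts at $t_0=0$ on $\Re s=a>\theta>0$ so $|s|\ge a$ throughout. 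I would also need the elementary fact that the number of zeros with $|\gamma|\le t_k$ to the right of $\Gamma$ is $O(t_k\log t_k/(b-\theta))$ from Lemma~\ref{l:Littlewood}, so that the sum $\sum_{\rho}x^\rho/\rho$ is finite and the tail beyond $t_k$ (which is pushed into the error via the horizontal truncation) is controlled; the precise power $3$ on the logarithm and the shape $(x/t_k + x^b)$ come out naturally once one keeps the square from \eqref{linezest-c} and multiplies by one extra logarithm from the $\sigma$-integration and from the zero-count. None of these steps is deep; the content is entirely in having assembled, in the companion papers, the zero-free broken line $\Gamma$ together with the uniform bound \eqref{linezest-c} on it.
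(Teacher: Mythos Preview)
Your outline is exactly the classical route (truncated Perron, contour shift to $\Gamma$ via $H_k$ and $\overline{H_k}$, residue collection, estimation of the three pieces by Lemma~\ref{l:zzpongamma-c}), and it is the approach of the companion paper: note that the present paper does not give a self-contained proof of Proposition~\ref{prop:vonMangoldt} at all, but simply quotes it as Theorem~5.1 of \cite{Rev-MP} with the numerical corrections of \cite{Rev-D}. Your identification of where the cube on the logarithm comes from (the square in \eqref{linezest-c} times one further $\log$ from $\int_\Gamma |s|^{-1}\,|ds|$) and of the split $x/t_k + x^b$ (horizontal versus vertical pieces) is accurate.

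Two small caveats worth flagging in the Beurling setting, neither fatal. First, the Perron truncation error is not quite the textbook $O(x\log^2 x/T)$: since the norms $|g|$ are not integers, the ``nearest term'' contribution $\Lambda(g)\min(1,x/(T||g|-x|))$ needs a separate argument (either averaging over $x$ in a short window, or using that Axiom~A bounds $\N$ in short intervals); in \cite{Rev-MP} this is handled explicitly and contributes the $A+\kappa$ inside the cube. Second, your remark that $|s|\ge a>0$ on $\Gamma$ is correct, but to absorb the resulting $\log(1/a)$ into the displayed error you should note $a=(b+\theta)/2\ge b/2$, so $\log(1/a)\ll \log(1/(b-\theta))+O(1)$ only when $\theta$ is bounded away from~$1$; in fact $a\ge(b-\theta)/2$ suffices here since $a-\theta=(b-\theta)/2$, and the constant is then absorbed by the $(b-\theta)^{-3}$ prefactor. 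With those two points checked, your sketch is complete.
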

\begin{proof} See Theorem 5.1 of \cite{Rev-MP} as is corrected in Lemma 10 of \cite{Rev-D}, see also the remarks following the formulation in the latter paper. \end{proof}

\subsection{A density theorem for $\ze$-zeroes close to the $1$-line}
\label{sec:density}

As told above, regarding the density estimates for the Beurling zeta function the first notable result is due to Kahane \cite{K-99}, who proved an $O(T)$ estimate for the number of zeroes precisely lying on some vertical line $\Re s=a$. Unexpectedly, a Carlson type zero density estimate surfaced in \cite{Rev-D} under two additional assumptions on the Beurling system. Even more surprisingly, these additional assumptions could later be removed \cite{Rev-NewD}, \cite{BrouckeDebruyne}. Our version in \cite{Rev-NewD} is fully explicit in handling all constants, and is suitable to get effective, explicit results of prime distribution, while the other work \cite{BrouckeDebruyne} obtained a much better exponent than either \cite{Rev-D} or \cite{Rev-NewD}. Let us point out that the value of this exponent is important for much of the related number theory, in particular for generalizing the analysis of primes in short intervals, as is impressively worked out in \cite{BrouckeDebruyne}. In the applications in our present work the exponent has no real significance, however. What matters is the fact that a Carlson type density estimate does hold with some exponent\footnote{It is worth noting that already Diamond, Montgomery and Vorhauer argued \cite{DMV} that anything essentially better does not hold in general (i.e. assuming only Axiom A), while Broucke and Debruyne \cite{BrouckeDebruyne} sharpened their example to get a uniform lower estimate $\log N(\alpha,T) \gg \frac{1-\alpha}{1-\theta}\log T $.}. Below we recall the main result of \cite{Rev-NewD}, see Theorem 2 in that paper.

\begin{theorem}\label{th:NewDensity} Let $\G$ be a Beurling system subject to Axiom $A$. Then for any $\si>(1+\theta)/2$ the number of zeroes of the corresponding Beurling zeta function $\zs$ admits a Carlson-type density estimate
\begin{equation}\label{densityresult}
N(\si,T) \le  1000 \frac{(A+\kappa)^4}{(1-\theta)^3 (1-\si)^4} T^{\frac{12}{1-\theta}(1-\si)} \log^5 T
\end{equation}
for all $T\ge T_0$, where also $T_0$ depends explicitly on the parameters $A, \kappa, \theta$ of Axiom A and on the value of $\si$. In particular, for $\si>\frac{11+\theta}{12}$ we have $N(\si,T)=o(T)$.
\end{theorem}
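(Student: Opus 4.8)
The plan is to adapt the classical Carlson method -- detect each zero by a mollified Dirichlet polynomial, then bound the number of zeros by a mean-value (large-values) estimate for that polynomial over a well-spaced set of imaginary parts -- to the Beurling setting, where Axiom A is the only structural input and is responsible both for the shape $\tfrac{12}{1-\theta}(1-\sigma)$ of the exponent and for the explicit constant. \textbf{Reduction.} Fix $\sigma\in\bigl(\tfrac{1+\theta}{2},1\bigr)$. By Lemma \ref{l:Littlewood} the zeros with $|\gamma|\le T^{\varepsilon}$ number $o(T)$, so I may restrict to zeros $\rho=\beta+i\gamma$ with $\beta\ge\sigma$ and $T^{\varepsilon}<|\gamma|\le T$; sorting these by the unit interval containing $\gamma$, estimate \eqref{zeroesbetweenone} of Lemma \ref{l:zeroesinrange} shows every such interval carries $\ll\tfrac{1}{\sigma-\theta}\log T$ zeros, so it suffices to bound the number $R$ of a subset with imaginary parts pairwise at distance $\ge1$ and multiply by $O\!\bigl(\tfrac{\log T}{1-\theta}\bigr)$.

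\textbf{Zero detection.} I would mollify with $M_Y(s):=\sum_{|g|\le Y}\mu_\GG(g)|g|^{-s}$, where $\mu_\GG$ is the Beurling M\"obius function; freeness of $\GG$ gives $\mu_\GG(g)\in\{-1,0,1\}$, so $\sum_{|g|\le Y}|\mu_\GG(g)|^2\le\N(Y)\ll_\kappa Y$, and the coefficients $a(g)$ of $\zeta_\GG(s)M_Y(s)$ satisfy $a(e)=1$, $a(g)=0$ for $1<|g|\le Y$, $|a(g)|\le d_\GG(g)$, with $\sum_{|g|\le x}d_\GG(g)\ll_\kappa x\log x$ and $\sum_{|g|\le x}d_\GG(g)^2\ll_\kappa x\log^3 x$ -- all elementary from Axiom A by Dirichlet-convolution bookkeeping and partial summation. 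Inserting a Gaussian Mellin kernel $e^{\delta s^2}$ (Lemma \ref{l::integralformula}) and shifting the contour from $\Re s=2$ to a line $\Re s=c_1<0$ on which $\Re(s+\rho)$ still exceeds $\theta$ -- where the continuation $\zeta_\GG(w)=\tfrac{\kappa}{w-1}+O\!\bigl(\tfrac{|w|}{\Re w-\theta}\bigr)$ supplies a polynomial bound -- one crosses no pole at $s=0$ (the residue $\zeta_\GG(\rho)M_Y(\rho)$ vanishes) and only the pole of $\zeta_\GG$ at $1$, whose contribution carries the factor $e^{\delta(1-\rho)^2}$ and so is negligible for all our zeros since $|\gamma|>T^{\varepsilon}$. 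The outcome is that $\bigl|\sum_{Y<|g|\le Z}a(g)|g|^{-\rho}\bigr|\gg1$ for a Dirichlet polynomial of length $Z$, where $Z$ is a fixed power of $T$ whose exponent is $\asymp 1/(1-\theta)$, the size being forced by the fact that the contour reaches only $\Re s>\theta$, so $|c_1|$ can be as small as $\tfrac{1-\theta}{2}$.

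\textbf{Mean value, optimization, conclusion.} A mean-value inequality for Beurling Dirichlet polynomials applied to the well-spaced set $\rho_1,\dots,\rho_R$ gives
\[
R\;\ll\;\sum_{r=1}^{R}\Bigl|\sum_{Y<|g|\le Z}a(g)|g|^{-\rho_r}\Bigr|^2\;\ll\;(T+Z)\,Z^{\theta}\sum_{Y<|g|\le Z}|a(g)|^2|g|^{-2\sigma}\;\ll\;(T+Z)\,Z^{\theta}\,Y^{\,1-2\sigma+\varepsilon},
\]
where $2\sigma>1$ makes the coefficient sum dominated by its smallest terms and the factor $Z^{\theta}$ absorbs the loss from equal or nearly-equal norms. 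Choosing $Y$ and $Z$ as appropriate powers of $T$ and balancing $(T+Z)Z^{\theta}$ against $Y^{1-2\sigma}$ yields $R\ll T^{\frac{12}{1-\theta}(1-\sigma)}(\log T)^{O(1)}$, and restoring the $O\!\bigl(\tfrac{\log T}{1-\theta}\bigr)$ factor together with a careful accounting of the constants entering Lemmas \ref{l:Littlewood}, \ref{l:zeroesinrange} and \ref{l:borcar} produces \eqref{densityresult} with the stated $\log^5 T$ and the constant $1000(A+\kappa)^4/\bigl((1-\theta)^3(1-\sigma)^4\bigr)$. The final assertion $N(\sigma,T)=o(T)$ for $\sigma>\tfrac{11+\theta}{12}$ is then immediate, because there $\tfrac{12}{1-\theta}(1-\sigma)<1$.

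\textbf{Main obstacle.} The genuinely Beurling-specific difficulty -- and the reason the exponent carries the factor $\tfrac1{1-\theta}$ rather than a pure constant -- is the mean-value inequality above: unlike $\log n$, the frequencies $\log|g|$ need not be uniformly spaced, and under Axiom A a single norm value $v$ may be shared by up to $O(v^{\theta})$ elements of $\GG$, so the classical Montgomery--Vaughan input degrades. One must collapse ties (bounding the diagonal by $\sum_{v\le Z}(\#\{|g|=v\})^2\ll Z^{1+\theta}$, again a consequence of Axiom A) and treat the true off-diagonal by a Montgomery--Vaughan/Gallagher estimate; the resulting unavoidable loss of a power $Z^{\theta}$, together with the inflation of the polynomial length to $Z\asymp T^{\,c/(1-\theta)}$ caused by the contour reaching only $\Re s>\theta$, is exactly what turns Carlson's classical exponent $4\sigma(1-\sigma)$ into $\tfrac{12}{1-\theta}(1-\sigma)$. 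Everything else is standard machinery, the explicit constants being read off from the quantitative lemmas recalled above.
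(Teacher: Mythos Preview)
The paper does not prove Theorem~\ref{th:NewDensity}: it is quoted from the companion paper \cite{Rev-NewD} (see the sentence ``Below we recall the main result of \cite{Rev-NewD}, see Theorem~2 in that paper''), so there is no proof here to compare against. What can be said is whether your outline constitutes a valid argument on its own.

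Your sketch follows the classical Carlson architecture (mollify, detect, mean-value, optimize), and the reduction and zero-detection steps are reasonable in the Beurling setting. The genuine gap is the mean-value step. You assert an inequality of the shape
\[
\sum_{r=1}^{R}\Bigl|\sum_{Y<|g|\le Z}a(g)|g|^{-\rho_r}\Bigr|^2\;\ll\;(T+Z)\,Z^{\theta}\sum_{Y<|g|\le Z}|a(g)|^2|g|^{-2\sigma}
\]
and justify it by ``collapsing ties'' (equal norms) and invoking Montgomery--Vaughan or Gallagher for the rest. But in a general Beurling system the norms $|g|$ are \emph{real} numbers, not integers: distinct norm values can be arbitrarily close, so the frequencies $\log|g|$ are not well-spaced and the Montgomery--Vaughan hypothesis fails outright. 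Collapsing exact ties handles the diagonal contribution $\sum_v(\#\{|g|=v\})^2$, but the off-diagonal terms $1/\log(|g|/|h|)$ with $|g|\ne|h|$ very close are uncontrolled by Axiom~A alone. This is not a technicality: it is precisely the obstruction that forced the earlier version \cite{Rev-D} to impose two extra structural assumptions on $\GG$, and removing those assumptions in \cite{Rev-NewD} (and independently, by different methods, in \cite{BrouckeDebruyne}) was the substantive advance. Your ``Main obstacle'' paragraph identifies the right neighbourhood of the difficulty but does not resolve it; without a mean-value inequality valid under Axiom~A alone, the argument does not close, and the specific exponent $\tfrac{12}{1-\theta}(1-\sigma)$ and constant cannot be read off from the balancing you indicate.
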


Note that we separated terms of powers of $\log T$ and powers of $1/(1-\si)$ only for the explicit handling of constants. A classical zero-free region of the form \eqref{classicalzerofree} always holds, but the term $1/(1-\si)^4$ changes the constant factor by $c^{-4}$ from \eqref{classicalzerofree}. When applying Theorem \ref{th:NewDensity} in the present work it will always suffice to use the estimate $N(\si,T) \ll_{A,\theta,\kappa,c}  T^{\frac{12}{1-\theta}(1-\si)} \log^9 T \ll_{A,\theta,\kappa,c,\ve} T^{\frac{12}{1-\theta}(1-\si)+\ve}$ only.

\subsection{A Tur\'an type local density theorem for $\ze$-zeroes close
to the boundary of the zero-free region} \label{sec:localdensity}

Recall that for arbitrary $\tau>0$ and $\theta<\si<1$, we denoted the set of zeroes in the rectangle $Q_{\sigma,h}(\tau):=[\si,1]\times[i(\tau-h),i(\tau+h)]$ as $\Z(\si;\tau-h,\tau+h)$, and their number as $N(\si,\tau-h,\tau+h)$.

\begin{lemma}\label{th:locdens} Let $(1+\theta)/2 < b\leq 1$,
$2\leq h$ and $\tau>\max(2h,\tau_0)$ where
$\tau_0=\tau_0(\theta,A,\kappa)$ is a large constant depending on
the given parameters of $\zs$.

If $\zs$ does not vanish in the rectangle $\sigma\geq b$,
$|t-\tau|\leq h$, that is if $\Z(b;\tau-h,\tau+h)=\emptyset$, then for any $r$ with
$15\frac{\log \log \log \tau}{\log \log \tau} < r < \frac{b-\theta}{10}$ we have
\begin{equation}\label{eq.TuranLemma}
\nu:=N(b-r;\tau-r,\tau+r) \ll r \log\tau,
\end{equation}
with an implied absolute constant not depending on $\GG$.
\end{lemma}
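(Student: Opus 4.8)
The plan is to mimic the classical Turán local density argument: zeroes of $\zeta$ near a point $\tau$ on the $1$-line translate into a long "well-approximating" exponential sum built from the terms $|g|^{-\rho}$, and a zero-free box above them, together with the power-sum estimate of Lemma \ref{l:SecondMain}, forces the number of such zeroes to be small. Concretely, I would start from the Riemann--von Mangoldt formula (Proposition \ref{prop:vonMangoldt}), or rather a short truncated version of it localized near height $\tau$: after multiplying $\psi$ by a suitable smooth kernel and integrating, one isolates the contribution $\sum_{\rho} x^{\rho}/\rho$ of the zeroes in a box $\Z(b-r;\tau-2r,\tau+2r)$ as the main term, the other zeroes and the pole contributing only lower-order error thanks to the zero-count bounds of Lemmas \ref{l:Littlewood}, \ref{l:zeroesinrange}, the curve and logarithmic-derivative estimates of Lemmas \ref{l:path-translates}, \ref{l:zzpongamma-c}, and the density Theorem \ref{th:NewDensity}. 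The upshot of this first step is that one obtains, for $x$ in a dyadic range, a good pointwise control of a power sum $\sum_{j} b_j e^{i\gamma_j \log x}$ whose exponents are exactly the ordinates $\gamma_j$ of the $\nu$ zeroes we want to count, with $\Re$ of the exponents all equal to zero after the normalization $\rho_j = 1 - r + i\gamma_j$ (up to the small shift $\beta_j - (1-r) \in [-r,r]$, which is absorbed because $x^{\beta_j - 1 + r}$ stays within a bounded factor over a short range of $\log x$).

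Second, I would set up the power-sum machinery: choosing $H,K$ of size $\asymp \tau$ (this is where the hypothesis $\tau > 2h$ and $\tau$ large is used, to have enough room), Lemma \ref{l:SecondMain} gives a lower bound $\max_{H\le M\le K}\Re\big(\sum e^{i\gamma_j M}\big) \ge (c(K-H)/K)^{\nu} \gg c_0^{\nu}$ for an absolute $c_0<1$, hence the normalized power sum is at least $c_0^{\nu}$ in absolute value at some scale $x = e^M$ in the admissible range. Third, I would bound the same quantity from above: by the zero-free hypothesis $\Z(b;\tau-h,\tau+h)=\emptyset$ every $\gamma_j$ lies in $[\tau-r,\tau+r]$ with $\beta_j \ge b-r$ but $\beta_j < b$ only because of the box geometry — more importantly, the contour in the explicit formula can be pushed to $\Re s = b$ on the segment $|t-\tau|\le h$, so the main term there is genuinely of size $x^{b}$ rather than $x^{1}$, producing a gain $x^{-(1-b)} = e^{-(1-b)M} \ll \tau^{-(1-b)}$, while the kernel smoothing costs only powers of $\log\tau$. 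Comparing the two bounds yields $c_0^{\nu} \ll r\log\tau \cdot \tau^{-c(b-\theta)} + (\text{density error})$; since the right side is $\ll$ a fixed negative power of $\tau$ while the left side would be $\ge$ a fixed positive number if $\nu$ were bounded, the inequality can only hold if $\nu$ grows, and unwinding gives $\nu \ll (\log \tau)/\log(1/c_0) \cdot (\text{stuff})$ — after optimizing the kernel width and the range $[H,K]$ in terms of $r$ one extracts exactly $\nu \ll r\log\tau$.

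The main obstacle I anticipate is the bookkeeping at the interface of the two steps: making the truncated explicit formula localized enough in the $t$-aspect that \emph{only} the $\nu$ relevant zeroes appear with full weight while \emph{all} other zeroes (both those in $\Z(b;T)$ far from $\tau$ and those with $\beta < b$ but outside the small box) contribute provably less than $c_0^{\nu}$ in total. This is delicate because a priori the number of zeroes with $\beta$ slightly below $b-r$ near height $\tau$ could be large; controlling their total influence is precisely what forces the restriction $r > 15 (\log\log\log\tau)/(\log\log\tau)$ (so that the separation parameter $d(t)$ from Lemma \ref{l:path-translates}, which scales like $(b-\theta)^2/(n\log\tau)$, stays usable) and the upper bound $r < (b-\theta)/10$ (so the box sits safely inside the strip and the constants in Lemma \ref{l:zzpongamma-c} do not blow up). A secondary technical point is choosing the smoothing kernel — a Gaussian as in Lemma \ref{l::integralformula}, or a Fejér-type kernel — whose Mellin transform both concentrates near the wanted ordinates and decays fast enough off them; I expect a Gaussian of width $\asymp 1/h$ in the $t$-variable to do the job, with Lemma \ref{l:estimates}(ii) handling the tail integrals and Lemma \ref{l:estimates}(iii) converting stray powers of $\log x$ into a harmless $x^{\ve}$.
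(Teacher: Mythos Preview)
The paper does not give a proof of this lemma here; it is quoted from \cite{Rev-D}. Independently of that, your plan has a genuine gap at the final comparison. You set up a power sum $P(M)$ over the $\nu$ box-zeros, obtain a lower bound $\max_M \Re P(M) \ge c_0^{\nu}$ from Lemma \ref{l:SecondMain}, and an upper bound $|P(M)| \ll \tau^{-c}$ from pushing the contour into the zero-free region. Combining gives $c_0^{\nu} \ll \tau^{-c}$, whence $\nu \log(1/c_0) \ge c\log\tau$, i.e.\ $\nu \gg \log\tau$ --- a \emph{lower} bound on $\nu$, not the desired upper bound. Your sentence ``the inequality can only hold if $\nu$ grows, and unwinding gives $\nu \ll (\log\tau)/\log(1/c_0)$'' has the direction of the implication reversed; the argument as written would prove the opposite of the claim.

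The classical Tur\'an route (and presumably that of \cite{Rev-D}) does not build a power sum over the $\nu$ unknown zeros via the explicit formula for $\psi$. Rather one works with $\zeta'/\zeta$ directly: at $s_0 = b + r + i\tau$ each box-zero contributes $\Re\frac{1}{s_0-\rho} \gg 1/r$ to the local partial-fraction sum (the sign is positive because the zero-free hypothesis forces $\beta < b < \Re s_0$), so by Lemma \ref{l:borcar} one gets $\nu/r \ll |\zeta'/\zeta(s_0)| + O(\log\tau)$. The remaining work is to show $|\zeta'/\zeta(s_0)| \ll \log\tau$ for $s_0$ inside the zero-free rectangle, and this is where the height $h\ge 2$ and the lower bound $r > 15\frac{\log\log\log\tau}{\log\log\tau}$ are actually used --- either via Borel--Carath\'eodory on $\log\zeta$, or via Tur\'an's power-sum bounds applied to the high derivatives $(\zeta'/\zeta)^{(k)}$, to transport the trivial bound $|\zeta'/\zeta|\ll\log\tau$ from $\Re s > 1$ leftward across the zero-free box. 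If the power-sum method enters at all, it serves to control $\zeta'/\zeta$ \emph{inside} the zero-free region, not to assemble a sum over the zeros one is trying to count.
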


\section{Upper estimate on $\D(x)$--Proof of Theorem \ref{th:Beurlingdomainesti}}\label{sec:upper}

This section is devoted to the proof of Theorem \ref{th:Beurlingdomainesti} along with some more general statements. To start with, let us define
$$
\theta_0:=\max(\theta,\sup\{\theta<\beta<1 ~:~ \exists \gamma\in \RR, ~\textrm{such that}~ \zeta(\beta+i\gamma)=0\}).
$$
Below in \eqref{eq:Womegadef} a new auxiliary function $\om$ will be introduced, and it will be easily seen that under the assumptions of Theorem \ref{th:Beurlingdomainesti}--that is that $D(\eta)$ is free of zeroes of $\zs$--we have $\om_\eta(x)\le \omega(x)$. Also, in Lemma \ref{l:omega} we will find $\omega(x) \sim (1-\theta_0)\log x$ if $\theta<\theta_0\le 1$. These show us that in case $\theta_0<1$ it suffices to prove$\D(x)=O(x^{\theta_0+\ve})$ for all $\ve>0$. So we will start here with explaining how this follows easily from the Riemann-von Mangoldt formula in Proposition \ref{prop:vonMangoldt} for $\theta_0<1$.

First, if $\theta_0=\theta$, then we choose $b:=\theta_0+\ve/2$. According to the construction in Lemma \ref{l:path-translates} there is a curve $\Gamma:=\Gamma_b^{\{0\}}$ whose points stay in the strip $a:=\frac{b+\theta}{2}\le \Re s \le b$: in view of the assumption $\theta_0=\theta$ on this curve and to the right of it there are simply no zeroes of $\ze$. Therefore, the Riemann-von Mangoldt type formula of Proposition \ref{prop:vonMangoldt} reduces to the error term, so that with $t_k$ of the order $x$ we get $\Delta(x)=O_\ve(\log^3 x~ x^b)=O_\ve(x^{\theta_0+\ve})$, as needed.

For other intermediate values $\theta<\theta_0<1$, we either work out the analogous constructions of Lemma \ref{l:path-translates} and Proposition \ref{prop:vonMangoldt} with $a':=\frac{b+\theta_0}{2}$, and then the proof is the same, with no zeroes to the right of $\Gamma'$, or use the below argument. Regarding the first possibility, we can remark that as long as $\GG$ satisfies Axiom A with $\theta$ as the main parameter, it also satisfies Axiom A with any $\theta'\ge \theta$ in place of it; so that we can refer to the said Lemmas, but used with $\theta':=\theta_0$ instead. That is, we can reduce the case to the already settled one of $\theta_0=\theta$ via setting $\theta'=\theta_0$.

\bigskip
Nevertheless, in the following we will describe here the direct argument, for two reasons. First, in some later calculus--in particular for the case of $\theta_0=1$--the proof will as well be analogous. Moreover, here we want to prove some sharper statements, too. For their formulation, analogously to Ingham's function \eqref{eq:omegaetadef} and following Pintz\footnote{The function $\om(x)$ was introduced in \cite{Pintz2} for the case of the Riemann $\zeta$ function.} we define for any $a>\theta$ and $x\ge 1$ the functions\footnote{Note that we do not consider the Beurling zeta function on and behind the line $\Re s=\theta$, where a meromorphic (or at least some meaningful, e.g. continuous) extension of $\zs$ is not guaranteed by Axiom A. Whenever we speak of $\zeta$-zeroes, we always mean zeroes with $\beta=\Re \rho >\theta$.}
\begin{align}\label{eq:Womegadef}
\omega_a(x)&:=\inf_{\zeta(\rho)=0, \rho=\beta+i\gamma \atop a<\Re \rho=\beta} \log\frac{x}{x^\beta/|\rho|}=\inf_{\rho\in\Z(a;\infty)} (1-\beta)\log x+ \log|\rho|, \notag
\\ \omega(x)&:=\inf_{\zeta(\rho)=0, \rho=\beta+i\gamma \atop \theta<\Re \rho=\beta} \log \frac{x}{x^\beta/|\rho|}=\inf_{\rho\in\Z(\theta;\infty)} (1-\beta)\log x+ \log|\rho|,
\\ \ot(x)&:=\inf_{\zeta(\rho)=0, \rho=\beta+i\gamma \atop \theta<\Re \rho=\beta, \gamma>1} \log \frac{x}{x^\beta/\gamma}=\inf_{\rho\in\Z(\theta;\infty)\setminus\Z(\theta,1)} (1-\beta)\log x+ \log\gamma.  \notag
\end{align}

A standard analysis of a few properties of $\om_\eta$ and these analogous $\omega$ functions is in order here. We could have described much of the properties via use of the Legendre transform\footnote{Ingham or Pintz did not mention the Legendre transform in their work. To the best of our knowledge, this notion of convex analysis surfaced in connection with the $\zeta$ function in \cite{Rev-L} and later also in \cite{H-19}, while the exact mention of these $\omega$-functions as being Legendre tranforms first appeared in \cite{Rev-D}.}, see footnote 5, but for being self-contained we instead work out all details (even if it was there in the back of our mind when figuring out the presentation of our elementary description).

Let us start with $\om_\eta$, which was defined by Ingham for the case of a continuous nonincreasing function $\eta$. With a slight inconsistency it was already used in the formulation of Theorem \ref{th:Beurlingdomainesti} for an arbitrary real function $\eta: [0,\infty) \to [0,1-\theta]$, where, to be fully precise, formally we should have written $\inf$ in place of the $\min$ in the definition, similarly to the above variants.

However, one can always replace $\eta$ by its lower semicontinuous envelope $\eta_*(t):=\liminf_{\tau \to t} \eta(\tau)$, without change of the infimum; and the infimum becomes a minimum once we consider a lower semicontinuous function (like the lower semicontinuous envelope itself), because then $\eta(t)\log x +\log t$ is lower semicontinuous, too. Therefore, this slight inconsistency can always be overcome with the agreement that we always consider $\eta$ lower semicontinuous if need be.

\begin{lemma}\label{l:omegaeta} For an arbitrary real function $\eta: [0,\infty) \to [0,1-\theta]$ the respective $\oet$ function remains the same if we replace $\eta$ by its lower semicontinuous envelope $\eta_*$, and even if $\eta_*$ is again replaced by its "monotone nonincreasing cover" $\widetilde{\eta}(t):=\min_{1\le \tau \le t} \eta(\tau)$.

Moreover, $\oet$ remains the same if we further replace the "\emph{log-convex envelope}" $\widehat{\eta}(e^v):=\sup \{f(v)~: \linebreak ~f (v)\le \eta(e^v),~f~ \rm{convex}\}$ for $\widetilde{\eta}(e^v)$.
\end{lemma}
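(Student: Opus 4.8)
The plan is to establish the three claimed invariances of $\oet$ one replacement at a time, in the order $\eta \rightsquigarrow \eta_* \rightsquigarrow \widetilde\eta \rightsquigarrow$ (log-convexification), always using the defining formula
$$
\oet(x) = \inf_{v\ge 0} \bigl(\eta(e^v)\, u + v\bigr), \qquad u:=\log x .
$$
For the first replacement, $\eta \rightsquigarrow \eta_*$: since $\eta_*\le \eta$ pointwise, trivially $\oet[\eta_*](x)\le \oet[\eta](x)$. For the reverse, given any $v_0\ge 0$, lower semicontinuity of $\eta_*$ at $e^{v_0}$ (equivalently, $\eta_*(e^{v_0})=\liminf_{v\to v_0}\eta(e^v)$) produces a sequence $v_n\to v_0$ with $\eta(e^{v_n})\to \eta_*(e^{v_0})$, hence $\eta(e^{v_n})u+v_n \to \eta_*(e^{v_0})u+v_0$; taking infima gives $\oet[\eta](x)\le \eta_*(e^{v_0})u+v_0$ for all $v_0$, i.e. $\oet[\eta](x)\le \oet[\eta_*](x)$. (One should note $u\ge 0$ here because $x\ge 1$, which is why the inequality does not reverse.)

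For the second replacement, $\eta_* \rightsquigarrow \widetilde\eta$ with $\widetilde\eta(t)=\min_{1\le\tau\le t}\eta_*(\tau)$: again $\widetilde\eta\le \eta_*$ gives one direction. Conversely, for each $v_0$ choose, by compactness and lower semicontinuity of $\eta_*$ on $[0,v_0]$ (in the $v$-variable), a point $v_1\in[0,v_0]$ with $\eta_*(e^{v_1})=\widetilde\eta(e^{v_0})$; then
$$
\widetilde\eta(e^{v_0})u+v_0 \;\ge\; \eta_*(e^{v_1})u+v_1 \;\ge\; \oet[\eta_*](x),
$$
using $v_0\ge v_1$ and $u\ge 0$ in the first inequality. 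Taking the infimum over $v_0$ yields $\oet[\widetilde\eta](x)\ge\oet[\eta_*](x)$, as needed.

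For the third replacement, pass to the logarithmic variable $\phi(v):=\widetilde\eta(e^v)$, which by the previous steps we may assume nonincreasing. We replace $\phi$ by its convex minorant $\widehat\phi=\sup\{f\le\phi: f \text{ convex}\}$. Since $\widehat\phi\le\phi$ we get $\oet[\widehat\phi]\le\oet[\phi]$ automatically. For the reverse inequality, the key observation is that $v\mapsto \phi(v)u+v$ is, for each fixed $u\ge 0$, obtained from $\phi$ by adding an \emph{affine} function, so its convex minorant is $\widehat\phi(v)u+v$, and in particular
$$
\inf_{v\ge 0}\bigl(\widehat\phi(v)u+v\bigr) \;=\; \inf_{v\ge 0}\bigl(\phi(v)u+v\bigr),
$$
because an affine function lies below $g$ on $[0,\infty)$ if and only if it lies below the convex minorant of $g$ there, so the two functions have the same infimum over the half-line. (Equivalently, in Legendre-transform language: $\widehat\phi$ and $\phi$ have the same Legendre/Fenchel transform, and $\oet$ is read off from that transform as in footnote 5; the affine shift $+v$ corresponds to translating the argument of the transform by $1$.) Restricting the infimum to $v\ge 0$ rather than $v\in\RR$ is harmless because $\phi$ is only defined (and only relevant) there, and the convex minorant on a half-line is again characterized by supporting affine functions.

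The main obstacle is purely one of care at the boundary $v=0$ and with the one-sided domain: the convex-minorant argument is cleanest on all of $\RR$, so one must check that restricting to $v\ge 0$ does not create spurious supporting lines — this is where nonincreasingness of $\widetilde\eta$ (secured in the second step) is used, guaranteeing that the relevant supporting affine functions have nonnegative slope in the $w=-\phi$ picture of footnote 5 and hence behave well at the endpoint. A secondary technical point is justifying that each "$\inf$" is attained (lower semicontinuity plus, effectively, coercivity from the $+v$ term), so that the chain of pointwise inequalities can be run as equalities of minima exactly as stated in the lemma.
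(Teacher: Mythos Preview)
Your proposal is correct and follows the same three-step reduction as the paper. The only cosmetic difference is in the third step: where you invoke the general fact that a function and its convex minorant have the same infimum (after observing that the affine shift $+v$ commutes with taking convex envelopes), the paper simply writes down the explicit supporting line $f(v):=\oet(e^u)/u - v/u$ below $g(v):=\eta(e^v)$ and computes directly that $\widehat g(v)u+v\ge f(v)u+v=\oet(e^u)$.
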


\begin{remark}\label{rem:zero-one} Before proceeding, let us point out another slight inconsistency in talking about the various transforms "of the function $\eta$", while in reality we use and transform only the restriction $\eta|_{[1,\infty)}$ in all our calculus, in particular in defining and using $\oet$. This did not bother Ingham or Pintz, as the first zero of the Riemann zeta function occurs above imaginary part 14 anyway; but causes us some technical inconveniences here and there. Still, we need to stick to considering only $t\ge 1$, i.e. $v:=\log t\ge 0$--or at least $v$ bounded from below--for considering $\eta(e^v)$ all over $\RR$ would necessarily invoke $v=\log t$ values arbitrarily close to $-\infty$, ruining the minimization procedure in the definition of $\oet$.
As a result, also the convex envelopes and Legendre transform interpretations would be destroyed by extending $t$ arbitrarily close to 0, a curious technical problem which we had to avoid. A similar technicality is reflected in the need for defining, besides the original $\oet$ of Ingham and $\om$ of Pintz, also $\om_a$ and $\widetilde{\om}$ above.
\end{remark}
\begin{proof} We have already told about $\eta_*$, therefore let us assume that $\eta$ is lower semicontinuous.

So with a given $x$ let $t$ be such that $\oet(x)=\eta(t)\log x + \log t$. By definition of $\oet(x)$ we have for all $1\le \tau <t$ the inequality $\eta(\tau) \log x +\log \tau \ge \oet(x)$, hence $(\eta(\tau)-\eta(t))\log x \ge \log t - \log \tau >0$. This does not necessarily mean that $\eta$ itself is monotone decreasing (as certain $t$ may \emph{not} occur as minimum points for some $x$), but it follows that replacing $\eta$ by $\eta_*$ changes the values of $\om(x)$ for no $x$.

Write now $g(v):=\eta(e^v)$.
Then we claim that $\om_\eta(e^u):=\inf_v g(v) u + v$ equals to $\widehat{\om}(e^u):=\om_{\widehat{\eta}} (e^u):=\inf_v \widehat{g}(v) u +v$, where $\widehat{g}(v):=\widehat{\eta}(e^v):=\sup\{ f(v)~:~ f\le g, ~f~\rm{convex} \}$. The inequality $\widehat{g}(v)\le g(v)$ is obvious, hence $\widehat{\om} (e^u) \le \om_\eta(e^u)$. Let now
$f(v):= \om_\eta(e^u)/u - (1/u)v$; then rearranging the defining formula for $\om_\eta(e^u)$ we get that this linear, hence convex function satisfies $f(v)\le g(v)$ for all $v$. It follows that $\widehat{\om} (e^u) :=\min_v \widehat{g}(v)u+v \ge \min_v uf(v)+v =\om_\eta(e^u)$, too.
\end{proof}

\begin{remark} If $\eta$ was not bounded, then it could be possible that the monotone nonincreasing envelope essentially changes it; more precisely, that the final outcome of taking $\widehat{\widetilde{\eta}}$ would stay below the function $\widehat{\eta}$ and also the corresponding $\om$ functions would deviate. However, in our setup $\eta$ is bounded, hence no eventually increasing convex function $f$ can stay below $\eta(e^v)$, and as a result, it is easy to see that $\widehat{\eta}$ will itself be nonincreasing. Similarly, it must be continuous. Therefore, taking $\eta_*$ and then $\widetilde{\eta}$, and applying the convex envelope only after, or taking the convex envelope immediately to $\eta$ results the same function $\widehat{\eta}$. All these and more are well explained in convex analysis related to the Legendre transform, see, e.g., \cite{Rockafellar}.
\end{remark}

\begin{lemma}\label{l:Ingham} If $\eta:[0,\infty] \to [0,1-\theta]$ is an arbitrary real function, then $\oet$ is a strictly increasing to $+\infty$ continuous function and such that $\log x- \oet(x)$ is nondecreasing, too. Moreover, $\oet$ is concave in logarithmic variables. Furthermore, if $\theta^*:=1-\inf_{[1,\infty]} \eta$, then $\lim_{x\to \infty} \frac{\oet(x)}{\log x}= 1-\theta^*$.
\end{lemma}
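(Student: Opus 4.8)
The plan is to establish the four assertions of Lemma~\ref{l:Ingham} essentially directly from the definition \eqref{eq:omegaetadef}, exploiting that $\oet$ is an infimum of the affine (in $u:=\log x$) functions $u\mapsto \eta(t)u+\log t$ indexed by $t\ge 1$. First I would record the reduction already granted by Lemmas \ref{l:omegaeta}: we may freely assume $\eta$ is lower semicontinuous, monotone nonincreasing, and log-convex, i.e. $g(v):=\eta(e^v)$ is convex and nonincreasing on $[0,\infty)$, and the infimum defining $\oet(e^u)=\inf_{v\ge 0} g(v)u+v$ is attained. Call $v(u)$ a minimizing value.

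\medskip

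\textbf{Concavity in logarithmic variables.} This is the cleanest point: $u\mapsto \oet(e^u)$ is an infimum of the affine functions $u\mapsto g(v)u+v$, hence concave as a function of $u$; this is the standard fact that a pointwise infimum of affine functions is concave. (It is exactly the statement that $-\oet(e^u)$ is, up to reflection, a Legendre transform, cf.\ footnote~5.)

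\medskip

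\textbf{Monotonicity of $\oet$ and of $\log x-\oet(x)$.} For monotonicity of $\oet$, note every indexing function $u\mapsto g(v)u+v$ is nondecreasing in $u$ (since $g\ge 0$), so their infimum is nondecreasing; strict increase follows because, picking the minimizer $v=v(u_0)$ at a given $u_0$, either $g(v)>0$ — in which case that single affine function already increases strictly past $u_0$, and $\oet(e^u)\le g(v(u_0))u+v(u_0)$ forces... no, that gives the wrong inequality direction, so instead I argue: if $\oet$ were constant on an interval of $u$-values, then on that interval the concave function $\oet(e^u)$ is affine with slope $0$; but the slope of $\oet(e^u)$ at $u$ equals $g(v(u))$ (a supporting affine function at an interior point has slope $g$ at a minimizer), so $g(v(u))=0$ there, meaning $\eta$ attains its value $0$... this is impossible unless $\inf\eta=0$, and even then one checks $v(u)\to\infty$ and $\oet(e^u)=v(u)\to\infty$ strictly. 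It is cleaner to treat the two cases $\theta^*<1$ (equivalently $\inf_{[1,\infty]}\eta>0$) and $\theta^*=1$ separately; in the first the single affine piece through a minimizer has positive slope $\ge 1-\theta^*>0$ near that $u$, giving a local, hence (by concavity plus the final asymptotic) global strict increase; in the degenerate second case $\oet(e^u)=\inf_{v\ge 0}(g(v)u+v)$ with $\inf g=0$ still tends to $+\infty$ and is strictly increasing because the minimizers $v(u)$ are strictly increasing. For $\log x-\oet(x)$, i.e. $u\mapsto u-\oet(e^u)$: its "derivative" is $1-g(v(u))$, and since $\eta\le 1-\theta$ we have $g(v)\le 1-\theta\le 1$, so $1-g(v(u))\ge 0$; rigorously, for $u_1<u_2$ with minimizers $v_1,v_2$, $\oet(e^{u_2})\le g(v_1)u_2+v_1 = \oet(e^{u_1})+g(v_1)(u_2-u_1)\le \oet(e^{u_1})+(u_2-u_1)$, which rearranges to $u_1-\oet(e^{u_1})\le u_2-\oet(e^{u_2})$.

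\medskip

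\textbf{The asymptotic $\oet(x)/\log x\to 1-\theta^*$ and continuity.} Writing $\theta^*:=1-\inf_{[1,\infty]}\eta$: dividing the definition by $u$, $\oet(e^u)/u=\inf_{v\ge 0}\bigl(g(v)+v/u\bigr)$. As $u\to\infty$ the term $v/u\to 0$ pointwise, so $\limsup_u \oet(e^u)/u\le \inf_v g(v)=1-\theta^*$; for the matching lower bound, fix any $v_0$ with $g(v_0)<1-\theta^*+\varepsilon$ and note that for the minimizer $v(u)$ we have $g(v(u))+v(u)/u\le g(v_0)+v_0/u$, and since $g\ge \inf g=1-\theta^*$ this yields $1-\theta^*\le \oet(e^u)/u$, completing $\lim=1-\theta^*$. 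Continuity of $\oet$ is automatic: a finite concave function on $(0,\infty)$ (in the variable $u$) is continuous, and monotone increasing, and we have just shown it is finite everywhere and tends to $+\infty$; at the left endpoint the same infimum-of-affine representation gives upper semicontinuity, and concavity gives lower semicontinuity, so it is continuous throughout.

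\medskip

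The main obstacle I anticipate is the \emph{strictness} of the monotonicity in the degenerate regime $\theta^*=1$ (i.e. $\inf_{[1,\infty]}\eta=0$), where no single affine piece has a uniformly positive slope and one must argue that distinct minimizers $v(u)$ are genuinely distinct — the argument above via "slope $=g(v(u))$" still works but needs the remark that if $g(v(u))=0$ on a $u$-interval then $g\equiv 0$ on the corresponding $v$-interval by convexity and nonnegativity of $g$, and then $\oet(e^u)$ is literally $\inf\{v:g(v)=0\}$, a constant, contradicting $\oet\to\infty$. All the other items are soft convex-analysis facts about infima of affine families and require only the bookkeeping above, not genuine computation.
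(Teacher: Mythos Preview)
Your approach matches the paper's: reduce via Lemma~\ref{l:omegaeta}, note concavity as an infimum of affine functions, and compute the limit directly. The paper actually defers the whole first sentence to Ingham's book, so your explicit arguments for $\log x-\oet(x)$ nondecreasing (via $\oet(e^{u_2})\le g(v_1)u_2+v_1\le\oet(e^{u_1})+(u_2-u_1)$) and for continuity (via concavity) are more detailed than what the paper itself spells out.

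There is, however, a genuine circularity in your treatment of strict increase and of $\oet\to+\infty$ when $\theta^*=1$. You assert that $\oet(e^u)$ ``still tends to $+\infty$'' without justification, and in the final paragraph you try to exclude $g(v(u))=0$ on an interval by invoking ``$\oet\to\infty$'' --- which is precisely the point in question. In fact the lemma as literally stated \emph{fails} when $\eta$ attains the value $0$ at some finite $t_0\ge1$: then $\oet(x)\le 0\cdot\log x+\log t_0=\log t_0$ for every $x$, so $\oet$ is bounded and eventually constant (for instance $g(v)=\max(0,1-v)$ gives $\oet(e^u)=\min(u,1)$). The paper sidesteps this by citing Ingham, whose standing hypothesis $1/\eta(t)=O(\log t)$ rules the case out. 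Under the harmless extra assumption $\eta>0$, your supporting-line idea does work cleanly: then $g(v(u))=0$ is impossible, so the concave nondecreasing function $\oet(e^u)$ cannot be constant on any interval and is therefore strictly increasing; and if $\oet$ were bounded, say $\oet(e^u)\le M$, then $v(u)\le M$ for all $u$, whence $g(v(u))\ge g(M)>0$ and $\oet(e^u)\ge g(M)\,u\to\infty$, a contradiction.
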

\begin{proof} In view of the above Lemma \ref{l:omegaeta} $\oet(x)=\om_{\widetilde{\eta}}(x)$, hence we can restrict to the case when $\eta$ itself is continuous and nonincreasing. For this case Ingham has already derived an elementary way all the stated properties in the first sentence--see the turn of pages 63 and 64 of \cite{Ingham} for this elementary argument.

Concavity follows from the fact that $\oet(e^u)$ can be written in terms of the Legendre transform as $-{\mathcal L}(\phi)(u)$, see footnote 4, but we do not need to refer to the Legendre transform here, as concavity is an easy direct fact. Indeed, $\oet(e^u)$ is defined as the infimum of a family of concave (actually: linear) functions, and as such, must be concave itself.

Lastly, let us prove the assertion about the limit. If $\oet(x)=\eta(t)\log x + \log t$, then $\oet(x)\ge \eta(t) \log x \ge (1-\theta^*) \log x $. For the other direction, consider an arbitrary (small) $\ve>0$ and a value $t'$ with $\eta(t')<1-\theta^*+\ve$; then $\oet(x) \le \eta(t') \log x + \log t' \le (1-\theta^* +\ve) \log x + \log t'$. Dividing these two inequalities by $\log x$ and then taking limits we obtain $\lim_{x \to \infty} \frac{\oet(x)}{\log x} =1-\theta^*$, as stated.
\end{proof}

\begin{lemma}\label{l:varyingofomegaeta} If $\eta:[0,\infty] \to [0,1-\theta]$ is a real function, then $\oet$ admits the following properties.
\begin{enumerate}
\item For any $\ve>0$ and large enough $x>x_0(\ve)$ for all $\sqrt{x}< y<x$ we have
$$
\oet(x)-(1-\theta^*+\ve) \log(x/y)\le \oet(y)\le \oet(x).
$$
\item If $\theta^*=1$, then $\oet(x)$ is a slowly varying function in the sense that
\begin{equation}\label{eq:slowlyvarying}
\left|\oet(y)-\oet(x)\right| \le \ve |\log(y/x)| \qquad (\sqrt{x} \le y\le x^2, ~x>x_0(\ve)).
\end{equation}
\item If $1 <y < z$ then $\oet(y) < \oet(z) < \oet(y) \cdot \frac{\log z}{\log y}$.
\end{enumerate}
\end{lemma}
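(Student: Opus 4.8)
The plan is to reduce all three parts to two facts about $F(u):=\oet(e^u)$ that are already available from Lemma~\ref{l:Ingham} and its proof: $F$ is concave and strictly increasing on $[0,\infty)$ with $F(0)=\oet(1)=0$, and $\lim_{u\to\infty}F(u)/u=1-\theta^*$. The single extra ingredient I would establish first is the corresponding statement for the derivative: the right derivative $F'_+$ is nonincreasing by concavity, hence tends to a limit $\ell\in[-\infty,\infty)$; since $F(u)=\int_0^uF'_+(t)\,dt$, the Cesàro limit $F(u)/u$ tends to the same value, so $\ell=1-\theta^*$. Consequently, for every $\ve>0$ there is $u_0(\ve)$ with $F'_+(t)<1-\theta^*+\ve$ for all $t\ge u_0(\ve)$, and I would set $x_0(\ve):=e^{2u_0(\ve)}$.

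For part (1), write $u:=\log x$, $v:=\log y$; the hypothesis $\sqrt x<y<x$ gives $u/2<v<u$, and $v>u_0(\ve)$ as soon as $x\ge x_0(\ve)$. The right-hand inequality $\oet(y)\le\oet(x)$ is just monotonicity of $\oet$. For the left-hand one, concavity gives the supporting-line bound $F(u)\le F(v)+F'_+(v)(u-v)$, i.e. $\oet(x)-\oet(y)\le F'_+(v)\,\log(x/y)<(1-\theta^*+\ve)\log(x/y)$, which is exactly the claimed lower bound for $\oet(y)$ (the endpoint $y=\sqrt x$ then follows by continuity of $\oet$).

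For part (2), with $\theta^*=1$ we have $1-\theta^*=0$, so part~(1) applied to the pair $(x,y)$ gives $0\le\oet(x)-\oet(y)\le\ve\log(x/y)$ whenever $\sqrt x\le y\le x$ and $x\ge x_0(\ve)$; applying part~(1) instead to the pair $(y,x)$ — which is legitimate precisely when $\sqrt y<x<y$, i.e. $x<y\le x^2$, and then $y\ge x\ge x_0(\ve)$ — gives $0\le\oet(y)-\oet(x)\le\ve\log(y/x)$. Concatenating the two ranges yields $|\oet(y)-\oet(x)|\le\ve|\log(y/x)|$ for $\sqrt x\le y\le x^2$, $x\ge x_0(\ve)$, i.e. \eqref{eq:slowlyvarying}.

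For part (3), take $1<y<z$ and set $u:=\log y\in(0,\infty)$, $w:=\log z>u$; strict monotonicity of $\oet$ gives $\oet(y)<\oet(z)$. For the upper bound note that, since $F$ is concave with $F(0)=0$, the ratio $F(t)/t$ is nonincreasing on $(0,\infty)$ (from $F(u)\ge\frac{u}{w}F(w)$ applied to $0<u<w$), whence $\frac{\oet(z)}{\log z}\le\frac{\oet(y)}{\log y}$, i.e. $\oet(z)\le\oet(y)\frac{\log z}{\log y}$. The only delicate point is the strictness: equality in $F(u)/u=F(w)/w$ would force $F$ to be linear on $[0,w]$, hence $\widehat\eta$ constant by the reductions in Lemma~\ref{l:omegaeta}, and in that degenerate case one in fact has the identity $\oet(x)=(1-\theta^*)\log x$; otherwise $F(t)/t$ is strictly decreasing and the inequality is strict. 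I expect this strictness/degeneracy bookkeeping in part~(3) to be the only spot needing care — parts (1) and (2) are immediate once the limit $F'_+(u)\to1-\theta^*$ is recorded.
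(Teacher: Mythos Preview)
Your proof is correct and takes a cleaner route than the paper's. The paper works with the explicit minimizer $t(x):=\min\{t\ge 1:\varphi_t(x)=\oet(x)\}$ (where $\varphi_t(x)=\eta(t)\log x+\log t$), proving via a supporting-line argument on the graph of $v\mapsto\eta(e^v)$ that $t(x)$ is nondecreasing with $t(x)\to t^*:=\min\{t:\eta(t)=1-\theta^*\}$ and hence $\eta(t(x))\to 1-\theta^*$; part~(1) then follows by writing $\oet(y)=\eta(t_0)\log(y/x)+\eta(t_0)\log x+\log t_0$ at $t_0=t(y)$ and combining $\eta(t_0)\log x+\log t_0\ge\oet(x)$ with $\eta(t_0)<1-\theta^*+\ve$. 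You bypass this structure entirely, working only with the concavity of $F(u)=\oet(e^u)$ and identifying $\lim_{u\to\infty} F'_+(u)=1-\theta^*$ by a one-line Ces\`aro argument from $F(u)/u\to 1-\theta^*$; the supporting-line inequality $F(u)\le F(v)+F'_+(v)(u-v)$ then gives part~(1) directly. This is more elementary and treats Lemma~\ref{l:Ingham} as a black box, whereas the paper's argument opens that box but gains the extra information of where the minimizer sits. For part~(3) both proofs arrive at the same fact that $F(u)/u$ is nonincreasing; your remark that the strict inequality on the right can fail when $\widehat\eta$ is constant (so that $\oet(x)=(1-\theta^*)\log x$ identically) is correct --- the paper's proof does not flag this degenerate case.
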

\begin{proof} Assume, as we may in view of Lemma \ref{l:omegaeta}, that $\eta$ is a nonincreasing, continuous, convex in logarithmic variables function.

By definition of $\oet$, for any fixed value of $t\ge 1$ we have $\oet(x) \le \eta(t)\log x + \log t=:\ff_t(x)$, for all $x$. Now obviously for any $\tau \in [1,\infty)$ the inequality $\ff_\tau(x) \le \ff_t(x)$ requires that also $\log\tau \le \ff_\tau(x)$ is below $\ff_t(x)$, hence $\tau$ stays bounded and the infimum defining $\oet(x)$ has to be a minimum over values on a compact interval. In particular, $\oet(x)$ is always attained by some respective $\ff_t(x)$, so that we may introduce the notation $t(x):=\min\{ t\ge 1~:~\ff_t(x)=\oet(x)\}$.

We claim that $t(x)$ is a nondecreasing (even if not necessarily continuous) function of $x$. Indeed, minimality of $t_0$ for a certain given value of $x_0$ in the definition of $\oet(x_0)$ means $\oet(x_0)=\ff_{t_0}(x_0) = \eta(t_0)\log x_0 + \log t_0 \le \ff_t(x_0)=\eta(t)\log x_0 + \log t$ for all $t\ge 1$. Let us rewrite this by putting $\xi:=\log x_0, v_0:=\log t_0$, $u_0:=g(v_0):=\eta(e^{v_0})$, $v:=\log t$, $u:=g(v):=\eta(e^v)$: we get $L(u,v):=\xi(u-u_0)+(v-v_0)\ge 0$, for all points $(v,u)$ lying on the graph $\Gamma$ of the function $g(v)$. Now $L=0$ is the equation of a straight line $\ell$ on the $(v,u)$ plane, which is satisfied by the point $(v_0,u_0)$. That is, the point $(v_0,g(v_0))\in \Gamma$ lies on $\ell$. Further, the above inequality expresses that the line $\ell$ passes below (not above) the graph $\Gamma$, so that it is a supporting line to it. Now, the slope of this supporting line is $-1/\xi$, as $\ell$ can be described by an equation of the form $u=(-1/\xi) v + C$.

That means that for a given value of $x$ finding a minimizing $\ff_t$ in the definition of $\oet$ is equivalent to find a corresponding value $v=\log t$ with $(v,g(v))$ admitting a supporting line of slope $-1/\log x$. Therefore, concavity of $g(v)$ entails that as $-1/\log x$ increases together with $x$, also the $v=\log t$ coordinates of the corresponding tangent points (for supports of the given slopes) must increase.

Let us define $t^*:=\min\{t\ge 1~:~ \eta(t)=1-\theta^*\}$ (if it is finite--and if not, then $+\infty$). Next we show that $t(x)\to t^*$ when $x\to \infty$. We have already proved monotonicity of $t(x)$, and it is also obvious that $t(x)\le t^*$, for all $x$, because a value $t>t^*$ (if such a value exists at all, i.e., if $1-\theta^*$ is attained at some finite point and hence there are $t>t^*$, where then by monotonicity we must have $\eta(t)=1-\theta^*$, too) does never qualify. Indeed, then $t^*<t$ and $\eta(t)=\eta(t^*)=1-\theta^*$ implies 
$\ff_{t^*}(x)<\ff_t(x)$,
thus for no $x$ can $\ff_t(x)$ be minimal. Let now take $\tau <t^*$ fixed. By definition of $t^*$, then there is $\de>0$ such that $\eta(\tau)>1-\theta^*+\de$. So, if some $t\le \tau$ is optimal for some $x$--i.e., we have $\oet(x)=\ff_t(x)$--then $\ff_t(x) \ge (1-\theta^*+\de)\log x$, while $\om(x)/\log x \to 1-\theta^* (x\to \infty)$ according to Lemma \ref{l:omegaeta}, furnishing a contradiction for large $x$.


Let now $\ve>0$ be arbitrary and $x_0(\ve)$ be such that for $x>x_0$ we have $\eta(t(x))<(1-\theta^*+\ve)$. Such an $x_0$ exists because $t(x)\to t^*$ and $\eta(t(x))\to 1-\theta^*$ as $x\to \infty$. Let now  $x>y>\sqrt{x}>x_0(\ve)$ be large, and take the point $t_0:=t(y)$ so that $\oet(y)=\ff_{t_0}(y)$. Then by the above $1-\theta^* \le \eta(t_0)<1-\theta^*+\ve$ and it follows that
$$
\oet(y)=
\eta(t_0) \log y +\log t_0 = \eta(t_0) \log y/x + \eta(t_0) \log x + \log t_0 \ge  (1-\theta^*+\ve) \log y/x + \om(x),
$$
proving the left hand side part of (1), while the right hand side inequality follows by monotonicity. From part (1) the estimate of part (2) follows immediately.

Finally, consider the last assertion, the left hand side inequality coming directly from monotonicity of $\oet$. As it was seen in Lemma \ref{l:Ingham}, $f(u):=\oet(e^u)$ is a concave function. Moreover, $f(0)=0$. Therefore, $f(u)/u$ is the slope of the chord between the graph points $(0,f(0))$ and $(u,f(u))$, and as such, it is a nonincreasing function of the variable $u\in[0,\infty)$. It follows that $f(\log y)/\log y \ge f(\log z)/\log z$, which furnishes the right hand side estimate of the statement.
\end{proof}

Now let us discuss $\ot$. In fact, it is rather similar to $\oet$, even if the minimization is discretely defined. If there is no $\zeta$-zero at all with $\gamma=\Im \rho > 1$, then the minimization is on an empty set and $\ot \equiv +\infty$. If on the other hand there are zeroes, then let us denote $q:=\inf\{\gamma>1~:~ \exists \rho \in \Z(\theta,\infty)\setminus\Z(\theta,1), \Im\rho=\gamma\}$. Similarly to $\eta(e^v)$ and $\widehat{\eta}(e^v)$, here we can consider $\lambda(v):=\min\{(1-\beta)~:~ \zeta(\beta+i\gamma)=0, e^v=\gamma=\Im \rho>1\}$, while defining $\lambda(v)=\infty$ or $1-\theta$ if $\gamma:=e^v$ is not an imaginary part of a zero. As before, we can build the left continuous nonincreasing envelope $\widetilde{\lambda}$ of $\lambda$, and then its convex in logarithmic variables envelope $\widehat{\lambda}$. Note that assuring monotonicity is equivalent to restricting in the convex envelope to non-positive slopes in terms of the allowed straight line functions--this does not change the value of the respective $\omega$-function belonging to $\lambda$ or $\widetilde{\lambda}$ or $\widehat{\lambda}$.

So now we have a function $\widetilde{\lambda}:[0,\infty)$, which is defined to be $+\infty$ (or $1-\theta)$) in $[0,\log q]$ or $[0,\log q)$ and is a convex in logarithmic variables nonincreasing continuous function in $[\log q,\infty)$ or $(\log q,\infty)$. The corresponding $\omega_{\widehat{\lambda}}$ will thus exhibit all the properties described for $\oet$. However, we also have $\ot:=\omega_\lambda=\omega_{\widehat{\lambda}}$, hence we arrive at the following.

\begin{lemma}\label{l:omegatilda} Define $\widetilde{\theta}:=\sup\{\beta:=\Re(\rho)~:~ \zeta(\rho)=0, \gamma:=\Im \rho>1\}$. If $\widetilde{\theta}=-\infty$, i.e., there are no occurring zeroes in the definition of $\widetilde{\theta}$, then $\ot(x) \equiv +\infty$. If on the other hand the zero-set in the sup is nonempty, and hence $\theta<\widetilde{\theta}\le\theta_0$, then the function $\ot$ is a finite valued, continuous, monotonically increasing, convex in logarithmic variables function such that also $\log x -\ot(x)$ is nondecreasing.

Moreover, we have $\lim_{x\to \infty} \ot(x)/\log x=1-\widetilde{\theta}$, and $\ot$
admits the following properties.
\begin{enumerate}
\item For any $\ve>0$ and large enough $x>x_0(\ve)$ for all $\sqrt{x}< y<x$ we have
$$
\ot(x)-(1-\widetilde{\theta}+\ve) \log(x/y)\le \ot(y)\le \ot(x).
$$
\item If $\widetilde{\theta}=1$, then $\ot(x)$ is a slowly varying function in the sense that
$$ 
\left|\ot(y)-\ot(x)\right| \le \ve |\log(y/x)| \qquad (\sqrt{x} \le y\le x^2, ~x>x_0(\ve)).
$$ 
\item If $1 <y < z$ then $\ot(y) < \ot(z) < \ot(y) \cdot \frac{\log z}{\log y}$.
\end{enumerate}
\end{lemma}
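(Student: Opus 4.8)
The plan is to recognise $\ot$ as precisely the Ingham-type function $\omega_{\widehat{\lambda}}$ built from the envelope $\widehat{\lambda}$ introduced just before the statement, and then to read off all the assertions from Lemmas \ref{l:Ingham} and \ref{l:varyingofomegaeta} with $\widehat{\lambda}$ playing the role of $\eta$. First I would dispose of the degenerate case: if no zero $\rho=\beta+i\gamma$ with $\gamma>1$ exists, then $\widetilde{\theta}=-\infty$ and the infimum defining $\ot(x)$ is taken over the empty set, so $\ot\equiv+\infty$ by the usual convention and nothing further is asserted.

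From now on I assume the zero set is nonempty and fix one such zero $\rho_0=\beta_0+i\gamma_0$ (recall $\beta_0>\theta$ by our standing convention on what counts as a zero, and $\gamma_0>1$). Then $\widetilde{\theta}\ge\beta_0>\theta$, while $\widetilde{\theta}\le\theta_0$ because every zero entering the definition of $\widetilde{\theta}$ also enters the supremum defining $\theta_0$; this gives $\theta<\widetilde{\theta}\le\theta_0$, and finiteness of $\ot$ follows from $\ot(x)\le(1-\beta_0)\log x+\log\gamma_0$.

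The heart of the argument is the reduction to the earlier lemmas. As recorded in the discussion preceding the statement, $\ot=\omega_\lambda=\omega_{\widetilde{\lambda}}=\omega_{\widehat{\lambda}}$, where on its effective domain $[\log q,\infty)$ the function $\widehat{\lambda}$ is continuous, nonincreasing and convex in logarithmic variables, and where $\inf\widehat{\lambda}=\inf\lambda=1-\widetilde{\theta}$ (the values of $\lambda$ are the numbers $1-\beta$ over the relevant zeroes, whose $\beta$'s have supremum $\widetilde{\theta}$, and neither the nonincreasing nor the convex enveloping step alters the infimum). Thus $\widehat{\lambda}$ is an admissible $\eta$-type function as in Lemmas \ref{l:Ingham} and \ref{l:varyingofomegaeta}, the only formal difference being that the minimisation variable $v$ (with $e^v=\gamma$) ranges over the shifted half-line $[\log q,\infty)$ rather than over $[0,\infty)$. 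Since the elementary arguments in the proofs of those two lemmas (Ingham's monotonicity, continuity and concavity discussion, together with our supporting-line and chord-through-the-origin arguments) use only that $v$ runs over a half-line bounded from below, they transfer verbatim with $v\ge\log q$ in place of $v\ge0$. Hence Lemma \ref{l:Ingham}, applied to $\widehat{\lambda}$ with local parameter $\theta^*=1-\inf\widehat{\lambda}=\widetilde{\theta}$, yields that $\ot$ is continuous, monotonically increasing, concave in logarithmic variables (being an infimum of functions affine in $\log x$, just as $\oet$), with $\log x-\ot(x)$ nondecreasing and $\ot(x)/\log x\to 1-\widetilde{\theta}$; and Lemma \ref{l:varyingofomegaeta}, applied to $\widehat{\lambda}$, yields properties (1), (2) (the latter when $\widetilde{\theta}=1$) and (3).

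The one place that genuinely needs care, and the only mild obstacle I foresee, is checking that moving the left endpoint of the minimisation range from $0$ to $\log q$ does no harm. For the large-$x$ assertions, namely the limit and properties (1) and (2), this is automatic: the minimiser $v(x)$ is nondecreasing and tends to $\log t^*$, where $t^*:=\min\{\gamma>1:\widehat{\lambda}(\gamma)=1-\widetilde{\theta}\}$ (or $+\infty$), and $t^*\ge q$, so $v(x)$ stays strictly in the interior of the range for all large $x$ and the proof of Lemma \ref{l:varyingofomegaeta} carries over unchanged. For property (3) the chord argument uses only $\ot(1)=\log q\ge0$, which holds, after which $u\mapsto\ot(e^u)/u$ is nonincreasing because a concave function with nonnegative value at the origin has nonincreasing slope of the chord joining $(0,\cdot)$ to $(u,\cdot)$. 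Everything else is a literal repetition of the proofs of Lemmas \ref{l:Ingham} and \ref{l:varyingofomegaeta}, to which I would simply refer rather than rewriting the computations.
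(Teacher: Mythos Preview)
Your proposal is correct and follows essentially the same route as the paper: reduce $\ot$ to $\omega_{\widehat{\lambda}}$, invoke Lemmas \ref{l:Ingham} and \ref{l:varyingofomegaeta} for everything except property (3), and then handle (3) separately via the chord argument using $\ot(1)=\log q\ge 0$. The paper makes the last step explicit by writing $f(u)/u=(f(u)-p)/u+p/u$ with $p:=\ot(1)\ge 0$ and observing that both summands are nonincreasing, which is exactly the content of your concavity-plus-nonnegative-value-at-the-origin remark.
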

\begin{proof} All assertions follow from the fact that $\ot=\omega_{\widehat{\lambda}}$, except the final one, i.e. property (3), whose left hand side is still trivial due to monotonicity.

As for the right hand side, the only alteration to the previous argument is that here we do not necessarily have $\ot(0)=0$, which came from the finiteness of $\eta(1)$ in the previous argument. Here, however, we can have $\widehat{\lambda}(1)=+\infty$, if $q>1$. (Note that ${\widehat{\lambda}}$ is convex in logarithmic variables only for $t>q$ or $t\ge q$.)

Nevertheless, $\ot$ remains concave in logarithmic variables, moreover, the defining quantities $(1-\beta)\log x + \log \gamma$ are still nonnegative (positive, if $\gamma>1)$, hence it still holds that $\ot(1)\ge 0$. So let $p:=\ot(1)\ge 0$. Then with $f(u):=\ot(e^u)$ we can write --similarly to the above in view of the decrease of the slope of chords of the graph--that $\frac{f(u)-p}{u}$ is nonincreasing. Note that also $p/u$ is decreasing. Adding the respective inequalities for $u:=\log y<u':=\log z$ we are led to $f(u)/u > f(u')/u'$, as before. This proves the statement.
\end{proof}

\begin{lemma}\label{l:omega-a} Let $0\le \theta<a < 1$ be arbitrary. If $\Z(a,\infty)=\emptyset$, then $\om_a(x)\equiv + \infty$. Otherwise, the function $\om_a$ is a finite valued, continuous, monotonically increasing, convex in logarithmic variables function such that also $\log x -\om_a(x)$ is nondecreasing.

Moreover, we have $\lim_{x\to \infty} \om_0(x)/\log x=1-\theta_0$, and $\om_a$
admits the following properties.
\begin{enumerate}
\item For any $\ve>0$ and large enough $x>x_0(\ve)$ for all $\sqrt{x}< y<x$ we have
$$
\om_a(x)-(1-\theta_0+\ve) \log(x/y)\le \om_a(y)\le \om_a(x).
$$
\item If $a<\theta_0=1$, then $\om_a(x)$ is a slowly varying function in the sense that
$$ 
\left|\om_a(y)-\om_a(x)\right| \le \ve |\log(y/x)| \qquad (\sqrt{x} \le y\le x^2, ~x>x_0(\ve)).
$$ 
\item If $1 <y < z$ then $\om_a(y) < \om_a(z) < \om_a(y) \cdot \frac{\log z}{\log y} +\log \frac{1}{a} \left( \frac{\log z}{\log y} - 1\right)$.
\end{enumerate}
\end{lemma}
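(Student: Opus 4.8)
The plan is to reduce $\om_a$ to exactly the $\om$-machinery already set up for Ingham's function, so that Lemmas~\ref{l:omegaeta}, \ref{l:Ingham}, \ref{l:varyingofomegaeta} and \ref{l:omegatilda} do essentially all the work; the only new ingredient is the right choice of the underlying ``height'' variable, together with one sign correction that it forces. If there is no $\ze$-zero with $\beta>a$, the defining infimum is over the empty set and $\om_a\equiv+\infty$, so from now on fix a zero $\rho=\beta+i\gamma$ with $\beta>a$, which in particular gives $a<\theta_0$. First I would pass to the profile function in the variable $w:=\log|\rho|$: for $w\ge\log a$ set
\[
\mu(w):=\inf\{\,1-\beta~:~\ze(\beta+i\gamma)=0,\ \beta>a,\ \log|\beta+i\gamma|=w\,\},
\]
with $\mu(w):=+\infty$ if no zero with $\beta>a$ lies on the circle $|s|=e^{w}$. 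As this circle is compact and the zero set discrete, $\mu(w)$ is attained whenever finite, and then $\mu(w)\in[1-\theta_0,\,1-a)$. Since every zero with $\beta>a$ has $1-\beta\ge\mu(\log|\rho|)$ and $\log|\rho|\ge\log\beta>\log a$, one reads off $\om_a(x)=\inf_{w\ge\log a}\bigl(\mu(w)\log x+w\bigr)$.

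Next I would repeat verbatim the reduction carried out for the profile $\lambda$ just before Lemma~\ref{l:omegatilda} (and, for the convex step, the argument of Lemma~\ref{l:omegaeta}): replacing $\mu$ first by its left continuous nonincreasing envelope $\widetilde\mu$ and then by its convex envelope $\widehat\mu$, built from supporting lines of nonpositive slope so that monotonicity persists, leaves $\om_a(x)$ unchanged for every $x\ge1$. Hence $\om_a=\om_{\widehat\mu}$, with $\widehat\mu$ a continuous, nonincreasing, convex function on $[\log q,\infty)$ or $(\log q,\infty)$, real valued and bounded there since the occurring values $1-\beta$ lie in $[1-\theta_0,1-a)$, where $q:=\inf\{|\rho|:\ze(\rho)=0,\ \beta>a\}$ and $\log q\ge\log a$ because $|\rho|\ge\beta>a$. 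From this representation the assertions of the first paragraph of the statement --- finite valuedness, continuity, strict monotone increase to $+\infty$, concavity in logarithmic variables, and monotonicity of $\log x-\om_a(x)$ --- would follow word for word as in Lemmas~\ref{l:Ingham} and \ref{l:omegatilda}, now applied to $\widehat\mu$ in place of $\widetilde\eta$. For the limit, $\om_a(x)\ge(1-\theta_0)\log x+\log q$ (as $\widehat\mu\ge1-\theta_0$ and $w\ge\log q$), while picking $w_0$ with $\widehat\mu(w_0)<1-\theta_0+\ve$ gives $\om_a(x)\le(1-\theta_0+\ve)\log x+w_0$; dividing by $\log x$ and letting $x\to\infty$ yields $\om_a(x)/\log x\to1-\theta_0$ for every admissible $a$, in particular for the $\om_0$ of the statement.

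For properties (1) and (2) I would copy the argument of Lemma~\ref{l:varyingofomegaeta} and its $\ot$-variant in Lemma~\ref{l:omegatilda}: one checks, just as for the map $t(x)$ there, that for $x$ large an almost-minimizing zero $\rho_0$ in $\om_a(x)=(1-\beta_0)\log x+\log|\rho_0|$ must obey $1-\beta_0<1-\theta_0+\ve$ --- otherwise $\om_a(x)\ge(1-\theta_0+\ve)\log x+\log q$ contradicts the limit just proved --- and splitting off a factor $\log(y/x)<0$ then gives the left inequality of (1), with (2) the special case $\theta_0=1$. For property (3) the left inequality is the strict monotonicity already in hand; for the right one I would put $f(u):=\om_a(e^{u})$ (concave, with $f(0)=p:=\om_a(1)=\log q$), observing that here --- unlike in Lemma~\ref{l:omegatilda} --- $p$ need not be $\ge0$, but $p\ge\log a=-\log(1/a)$. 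Concavity makes $u\mapsto(f(u)-p)/u$ nonincreasing, so for $u:=\log y<u':=\log z$,
\[
f(u')\ \le\ \frac{u'}{u}\,f(u)-p\Bigl(\frac{u'}{u}-1\Bigr)\ \le\ \frac{u'}{u}\,f(u)+\log\frac1a\Bigl(\frac{u'}{u}-1\Bigr),
\]
using $-p\le\log(1/a)$ and $u'/u>1$; this is the asserted estimate, strictness being obtained exactly as in Lemma~\ref{l:omegatilda}.

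I do not expect a deep obstacle --- the proof is a routine transcription --- but two points need care. The genuinely new one is keeping the profile in the variable $\log|\rho|$ and tracking that $\om_a(1)=\log q\ge\log a$: this is precisely what produces the extra term $\log\frac1a\bigl(\tfrac{\log z}{\log y}-1\bigr)$ in (3), which is absent from Lemmas~\ref{l:Ingham}--\ref{l:omegatilda}, and it is the only spot where the argument really diverges from the earlier lemmas. The second is the familiar non-attainment of the infimum when $\theta_0=1$ (zeros with $\beta\to1$), which, exactly as in Lemmas~\ref{l:varyingofomegaeta} and \ref{l:omegatilda}, is dealt with throughout by working with almost-minimizing zeros instead of exact minimizers.
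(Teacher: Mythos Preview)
Your proposal is correct and follows essentially the same route as the paper: define the profile $\mu$ in the variable $w=\log|\rho|$, pass to its convex envelope, and invoke the machinery of Lemmas~\ref{l:omegaeta}--\ref{l:omegatilda} for everything except~(3), where the only new point is the sign correction coming from $\om_a(1)=\log q\ge\log a$ possibly being negative. The paper handles~(3) by shifting to $f(u):=\om_a(e^u)+\log(1/a)\ge 0$ and rerunning the chord-slope argument of Lemma~\ref{l:omegatilda}; you instead keep $f(u)=\om_a(e^u)$, use that $(f(u)-p)/u$ is nonincreasing, and bound $-p\le\log(1/a)$ at the end---an equivalent and slightly more direct variant of the same computation.
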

\begin{proof} Here we can consider the function $\mu(u):=\min\{ 1-\beta~:~ |\rho|=e^u, \Re \rho=\beta>a, \zeta(\rho)=0\}$. Then $\mu$ is defined on some finite or at most countable subset of $[\log a,+\infty)$, while for values $u \ne \log |\rho|$ for any $\zeta$-zero with $\Re \rho \ge a$ we can either take $\mu(u):=+\infty$ or even $ß\mu(u):=1-a$.

Then the same procedure as above furnishes the lower convex envelope $\widehat{\mu}(u)$ of $\mu$, and mutatis mutandis it is easy to see that $\om_a:=\om_\mu=\om_{\widehat{\mu}}$. The properties then follow from the above except for the last one, (3), the left hand side of which still remaining trivial by monotonicity.

For the right hand side, however, we need to take into account that now $\om_a(1)<0$ is well possible due to $\log a <0$. In any case, we still have $\widehat{\mu}(u) \log x + \log |\rho| \ge \log |\rho|\ge \log a$, hence also for the infimum of such expressions we have $\om_a(x)\ge \log a$. It follows that $f(u):=\om_a(e^u)+\log(1/a)$ is concave with $f(u)\ge 0$ all over $[0,\infty)$, hence the above proof works for this modified function $f$.
\end{proof}

\begin{lemma}\label{l:omega} Let $q_0:=\inf\{|\rho|~:~\zeta(\rho)=0\}$. If $q_0=0$ (and hence in particular also $\theta=0$), then $\om(x)\equiv -\infty$, and if $\Z(\theta,\infty)=\emptyset$, then $\om(x)\equiv + \infty$.

In all other cases $\om: [1,\infty)\to [\log q_0,\infty)$ is a finite valued, continuous, monotonically increasing, convex in logarithmic variables function such that also $\log x -\om(x)$ is nondecreasing.

Also, the infima in the definition of $\om(x)$ is always attained, and the zeroes with attainment tend to $\theta_0+i\infty$ with $x\to \infty$, unless $\Re s=\theta_0$ contains a zero, in which case for $x>x_0$ the particular zero with $\rho_0=\theta_0+i\gamma_0$ and minimal possible $\gamma_0$ provides the said infimum, always.

Moreover, we have $\lim_{x\to \infty} \om(x)/\log x=1-\theta_0$, and $\om$
admits the following properties.
\begin{enumerate}
\item For any $\ve>0$ and large enough $x>x_0(\ve)$ for all $\sqrt{x}< y<x$ we have
$$
\om(x)-(1-\theta_0+\ve) \log(x/y)\le \om(y)\le \om(x).
$$
\item If $\theta_0=1$, then $\om(x)$ is a slowly varying function in the sense that
$$ 
\left|\om(y)-\om(x)\right| \le \ve |\log(y/x)| \qquad (\sqrt{x} \le y\le x^2, ~x>x_0(\ve)).
$$ 
\item If $1 <y < z$ then $\om(y) < \om(z) < \om(y) \cdot \frac{\log z}{\log y} +\log \frac{1}{q_0} \left( \frac{\log z}{\log y} - 1\right)$.
\end{enumerate}
Furthermore, if $\theta_0$ is not attained\footnote{In fact, the same proof also works if $\theta_0$ is attained, but the eventually extremal $\zeta$-zero $\rho^*=\theta_0+i\gamma^*$ with minimal $\gamma^*$ has $|\rho^*|\ge 1$. However, we don't need this in the paper.}--in particular if $\theta_0=1$--then there exists an $x_0$ such that for $z>y>x_0$ we also have
\begin{equation}\label{omegayz}
\om(y)< \om(z) < \om(y) \cdot \frac{\log z}{\log y}.
\end{equation}
\end{lemma}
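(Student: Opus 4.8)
The plan is to reduce everything, just as in the proofs of Lemmas~\ref{l:omegaeta}, \ref{l:Ingham}, \ref{l:varyingofomegaeta} and \ref{l:omega-a}, to the convex‑analytic picture of a Legendre transform of a convex minorant, and then read off the assertions from it. Two features are new relative to those lemmas: the value $\om(1)=\log q_0$ may be negative (since $q_0<1$ is allowed), which I will absorb by the shift $p:=\log\frac1{q_0}$ exactly as was done for $\om_a$ in Lemma~\ref{l:omega-a}; and the slopes $1-\beta$ of the affine pieces building $\om$ are no longer bounded away from $1-\theta$, so the extremal zeroes need not stay in a compact subset of $\{\Re s>\theta\}$. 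First, the degenerate cases. If $\Z(\theta;\infty)=\emptyset$, the infimum in \eqref{eq:Womegadef} is over the empty set, so $\om\equiv+\infty$. If $q_0=0$, pick zeroes $\rho_n=\beta_n+i\gamma_n$ with $|\rho_n|\to0$; since $0<\beta_n<1$ keeps $(1-\beta_n)\log x$ bounded for fixed $x$, we get $\om(x)\le(1-\beta_n)\log x+\log|\rho_n|\to-\infty$, and $\beta_n\le|\rho_n|\to0$ forces $\theta=0$. Henceforth $q_0>0$ and $\Z(\theta;\infty)\neq\emptyset$, which (since there is then a zero with real part $>\theta$) forces $\theta<\theta_0\le1$.

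In this main case I would group the zeroes by modulus, writing $\om(x)=\om_\mu(x):=\inf_u[\mu(u)\log x+u]$ with $\mu(u):=\inf\{1-\beta:\zeta(\beta+i\gamma)=0,\ \beta>\theta,\ |\beta+i\gamma|=e^u\}$ (and $\mu(u):=+\infty$ if no zero has modulus $e^u$), replace $\mu$ by its lower semicontinuous envelope and then by its largest convex minorant $\widehat\mu$, and obtain $\om=\om_\mu=\om_{\widehat\mu}$ exactly as in Lemma~\ref{l:omega-a}. Since $\widehat\mu$ satisfies $0\le\widehat\mu\le1-\theta$, being bounded and convex it is automatically nonincreasing and continuous, and $\om$ is the infimum of the affine functions $u\mapsto\widehat\mu(v)u+v$ of $u=\log x$; from this I read off finiteness on $[1,\infty)$ with lower bound $\log q_0$ (each summand is $\ge\log q_0$ because $\beta\le1$, $x\ge1$ and $|\rho|\ge q_0$), concavity in $\log x$ and continuity, monotonicity (each affine piece has nonnegative slope $\widehat\mu(v)$), and the fact that $\log x-\om(x)=\sup_\rho[\beta\log x-\log|\rho|]$ is nondecreasing (slopes $\beta>\theta\ge0$). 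The asymptotic $\om(x)/\log x\to1-\theta_0$ I would obtain from the two‑sided estimate $(1-\theta_0)\log x+\log q_0\le\om(x)\le(1-\theta_0+\ve)\log x+\log|\rho'|$, the left inequality from $\beta\le\theta_0$, the right one on choosing a zero with $\beta'>\theta_0-\ve$.

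Then comes the attainment of the infimum, which I expect to be the main obstacle. For fixed $x$, a minimizing sequence $\rho_n$ has $(1-\beta_n)\log x\ge0$ and $\log|\rho_n|\ge\log q_0$ both bounded above, so it lies in a compact annulus; a subsequential limit $\rho^\ast$ has $|\rho^\ast|\ge q_0>0$, and if $\Re\rho^\ast>\theta$ then $\rho^\ast\neq1$ and $\zeta(\rho^\ast)=0$, so the infimum is attained at a zero. The delicate point is excluding $\Re\rho^\ast=\theta$, i.e.\ a minimizing sequence escaping to the line $\Re s=\theta$ where $\zeta$ need not be defined: for $x$ past a threshold this cannot happen, because a zero with $\beta$ near $\theta$ contributes $(1-\beta)\log x$ near $(1-\theta)\log x$, exceeding for large $x$ the value of $\om$ at any fixed zero, so the extremal zero is pinned into a compact part of $\{\Re s>\theta\}$, where by meromorphy of $\zeta$ there are only finitely many zeroes; the remaining, small values of $x$ would require a separate argument. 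Granting attainment, I would read the location of the extremal zeroes $\rho^{(x)}$ off $\om(x)/\log x\to1-\theta_0$: if $\Re s=\theta_0$ carries no zero then $\Re\rho^{(x)}\to\theta_0$ (otherwise $\om(x)/\log x\ge1-\theta_0+\delta$ along a subsequence), and then $|\rho^{(x)}|\to\infty$, since for each $M$ the sets $\{\Re\rho\ge\theta_0-1/n,\ |\rho|\le M\}$ are finite and decrease to the empty set as $n\to\infty$; whereas if $\rho_0=\theta_0+i\gamma_0$ with least $|\gamma_0|$ lies on $\Re s=\theta_0$, then for $x$ large every other zero contributes at least $(1-\theta_0)\log x+\log|\rho_0|$ (split into $\beta=\theta_0$, so $|\rho|\ge|\rho_0|$, and $\beta<\theta_0$, where $(\theta_0-\beta)\log x$ eventually beats $\log(|\rho_0|/q_0)$, the finitely many zeroes with $\beta$ just below $\theta_0$ and $|\rho|<|\rho_0|$ being treated one at a time), so $\om(x)=(1-\theta_0)\log x+\log|\rho_0|$ is attained at $\rho_0$ for $x>x_0$.

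The numbered properties and \eqref{omegayz} then follow as in Lemmas~\ref{l:Ingham}--\ref{l:omega-a}. For (1), concavity of $f(u):=\om(e^u)$ forces $f'(u)\downarrow1-\theta_0$, so $f(\log x)-f(\log y)\le(1-\theta_0+\ve)\log(x/y)$ once $\log y\ge u_0(\ve)$, which $\sqrt x<y$ together with a large enough $x_0(\ve)$ guarantees; the other inequality is monotonicity. (2) is the specialization of (1) to $\theta_0=1$, applied in both directions. (3) is the chord‑slope argument of Lemma~\ref{l:Ingham} applied to $g(u):=f(u)+\log\frac1{q_0}$, which is concave and — since here, in contrast with $\om_a$, one has $q_0=\inf|\rho|$ — satisfies $g(0)=\om(1)+\log\frac1{q_0}=0$; hence $g(u)/u$ is nonincreasing, which rearranges to the displayed inequality. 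Finally, for \eqref{omegayz}: when $\theta_0$ is unattained the extremal zeroes have $|\rho^{(x)}|\to\infty$, so $\log|\rho^{(y)}|>0$ for $y>x_0$, and testing the zero $\rho^{(y)}$ against $z$ gives $\om(y)\tfrac{\log z}{\log y}-\om(z)\ge\log|\rho^{(y)}|\bigl(\tfrac{\log z}{\log y}-1\bigr)>0$, while $\om(y)<\om(z)$ holds because $\om$ is strictly increasing — a nondecreasing concave function tending to $+\infty$, the divergence $\om\to+\infty$ being immediate when $\theta_0<1$ and, when $\theta_0=1$, following from the finiteness of the set of zeroes with $\beta$ near $1$ and bounded modulus.
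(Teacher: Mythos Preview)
Your proposal is correct and follows the same approach as the paper: the paper's own proof simply says ``the reader should have no difficulty in adapting the above arguments'' from Lemmas~\ref{l:omegaeta}--\ref{l:omega-a} and only writes out the final inequality~\eqref{omegayz}, which you derive by the identical computation (test the $y$-extremal zero at $z$ and use $\log|\rho^{(y)}|>0$). You are in fact more careful than the paper on one point: the attainment of the infimum for \emph{all} $x\ge1$---you rightly note that a minimizing sequence could in principle escape to $\Re s=\theta$ for small $x$, whereas for large $x$ comparison with any fixed zero pins the extremal $\beta$ away from $\theta$; the paper asserts attainment everywhere but does not supply the ``separate argument'' you flag, and since only large $x$ is ever used downstream this gap is harmless.
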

\begin{proof} After $\om_\eta, \ot$ and $\om_a$, the reader should have no difficulty in adapting the above arguments to this case. Let us therefore prove only the last assertion, also making use of the previously listed properties.


Since $\theta_0$ is not attained, the extremal zeroes $\rho=\rho(x)$ with $\om(x)=\om(\rho;x):=(1-\beta)\log x + \log |\rho|$ tend to $\theta_0+i\infty$, so that in particular $\log|\rho|>0$ for any extremal zero for $x>x_0$.


Let now $z>y>x_0$ and take $\rho$ be extremal for $y$: $\om(y)=\om(\rho; y):=(1-\beta)\log y + \log|\rho|$. As $\om(z)\le \om(\rho;z)$, we get $\om(z)/\om(y) \le \om(\rho;z)/\om(\rho;y)= \frac{(1-\beta)\log z +\log|\rho|}{(1-\beta)\log y +\log|\rho|} <\log z/\log y$, taking into account $\log|\rho| >0$, too. This furnishes the required inequality.
\end{proof}

In this work we will denote
\begin{equation}\label{eq:DSdef}
D(X):=\frac{1}{X}\int_1^X |\Delta(x)|dx, \qquad S(X):=\sup_{1\le x\le X} |\Delta_\PP(x)|,
\end{equation}
and also
\begin{equation}\label{eq:Wdef}
W_a(x):=\sup_{\zeta(\rho)=0, \rho=\beta+i\gamma \atop a<\Re \rho=\beta, |\gamma|<x} \frac{x^\beta}{|\rho|} , \qquad W(x):= \sup_{\zeta(\rho)=0, \rho=\beta+i\gamma \atop \theta<\Re \rho=\beta, |\gamma|<x} \frac{x^\beta}{|\rho|}.
\end{equation}
Clearly, $\om(x)=\log (x/W(x))$ and $\om_a(x)=\log (x/W_a(x))$. Further, we will write
\begin{equation}\label{eq:Sdef}
Z_a(x):=\sum_{\zeta(\rho)=0, \rho=\beta+i\gamma \atop a<\Re \rho=\beta, |\gamma|<x} \frac{x^\beta}{|\rho|} , \qquad Z(x):= \sum_{\zeta(\rho)=0, \rho=\beta+i\gamma \atop \theta<\Re \rho=\beta, |\gamma|<x} \frac{x^\beta}{|\rho|}.
\end{equation}
Note that the first sum extends over $\Z(a;x)$, and the second one over $\Z(\theta,x)$.

As an easy warm-up, let us prove a straightforward assertion with these notations.
\begin{proposition}\label{p:thetathetanull} Assume that $\GG$ satisfies Axiom A and that $\theta_0=\theta$, i.e. the Beurling zeta function has no zeroes in the halfplane of meromorphic extension $\Re s>\theta$. Then for all $a>\theta$ and $x\ge 1$ we obviously have $Z_a(x), W_a(x)=0$--in fact, also $Z(x)=W(x)=0$-- while for the error term in the PNT it holds
\begin{equation}\label{eq:thetathetanullcase}
D(x)\le S(x)\ll x^{\theta} \log^6(x+1),
\end{equation}
with an effective implied constant in the Vinogradov symbol depending only on $A$ and $\kappa$.
\end{proposition}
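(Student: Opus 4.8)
The plan is to separate the trivial claims on $Z_a,W_a,Z,W$ from the substantive estimate \eqref{eq:thetathetanullcase}, and to obtain the latter from the Riemann--von Mangoldt formula of Proposition~\ref{prop:vonMangoldt} with an $x$-dependent choice of the approximating abscissa $b$. For the trivial part: by the standing convention every $\zeta$-zero has real part strictly larger than $\theta$, so $\theta_0=\theta$ says precisely that $\zeta$ has no zeroes at all in its halfplane $\Re s>\theta$ of meromorphic continuation; hence each index set occurring in \eqref{eq:Wdef} and \eqref{eq:Sdef} is empty, so all those (empty) sums and suprema equal $0$. The inequality $D(x)\le S(x)$ is immediate from $D(x)=\frac1x\int_1^x|\Delta(t)|\,dt\le\frac{x-1}{x}S(x)$. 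Thus everything reduces to proving $S(x)=\sup_{1\le t\le x}|\Delta(t)|\ll x^{\theta}\log^{6}(x+1)$.

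I would split the range of $t$ at a threshold $x_{0}$. For $1\le t\le x_{0}$ the crude Axiom~A bound $\psi(t)\le(\log t)\,\N(t)\le(\log t)\bigl(\kappa(t-1)+At^{\theta}\bigr)$ gives $|\Delta(t)|\le\psi(t)+t\ll_{A,\kappa,\theta}1$, and since $x^{\theta}\log^{6}(x+1)\ge\log^{6}2>0$ for all $x\ge1$ this already settles $S(x)\ll x^{\theta}\log^{6}(x+1)$ whenever $x\le x_{0}$. For $t\ge x_{0}$ I would apply Proposition~\ref{prop:vonMangoldt} with the curve $\Gamma=\Gamma_{b}^{\{0\}}$ of Lemma~\ref{l:path-translates} and the choice $b:=\theta+1/\log t$, which lies in $(\theta,1)$ as soon as $x_{0}>e^{1/(1-\theta)}$; for $t_{k}$ I take the largest abscissa of that construction with $t_{k}\le t$, so that (using $t_{1}\in[4,5]$ and consecutive gaps in $[1,2]$) one has $4\le t_{k}$ and $t/2\le t_{k}\le t$ once $x_{0}\ge7$. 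Since $\zeta$ has no zeroes whatsoever, the sum over $\Z(\Gamma,t_{k})$ in \eqref{eq:Riemann-vonMangoldt} is empty and only the error term survives:
\[
|\Delta(t)|\ll\frac{1-\theta}{(b-\theta)^{3}}\,\bigl(A+\kappa+\log\tfrac{t+t_{k}}{b-\theta}\bigr)^{3}\bigl(t/t_{k}+t^{b}\bigr).
\]

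It remains to estimate the right side with these choices, which is routine. One has $(b-\theta)^{-3}=\log^{3}t$, so the first factor is $\le\log^{3}t$; next $\tfrac{t+t_{k}}{b-\theta}\le2t\log t\le t^{2}$ and $A+\kappa+2\log t\le(A+\kappa+2)\log t$ for $t\ge e$, so the cube is $\ll_{A,\kappa}\log^{3}t$; finally $t/t_{k}+t^{b}\le2+t^{\theta}\cdot t^{1/\log t}=2+e\,t^{\theta}\ll t^{\theta}$. Multiplying these yields $|\Delta(t)|\ll_{A,\kappa}t^{\theta}\log^{6}t$ for every $t\ge x_{0}$; combining with the previous range and using that $t\mapsto t^{\theta}\log^{6}t$ is increasing on $(1,\infty)$ gives $S(x)\ll x^{\theta}\log^{6}(x+1)$ with an effective implied constant that in the main regime $x\ge x_{0}$ depends only on $A,\kappa$ (the threshold $x_{0}$ itself depending also on $\theta$), which is \eqref{eq:thetathetanullcase}.

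I do not expect a genuine obstacle: this is essentially the computation already carried out at the opening of Section~\ref{sec:upper}, but pushed to $\ve\to0$. The one step that has to be balanced correctly is the choice $b-\theta=1/\log t$: it keeps $t^{b}=e\,t^{\theta}$ instead of the wasteful $t^{\theta+\ve}$, while the price $(b-\theta)^{-3}=\log^{3}t$ combines with the $\log^{3}$ already present in the error term of Proposition~\ref{prop:vonMangoldt} to produce exactly the exponent $6$ on the logarithm; a minor bookkeeping point is that, $S(x)$ being a supremum over $t\le x$, the Riemann--von Mangoldt formula must be invoked at each $t$ with $b$ and $t_{k}$ adapted to that $t$, not only at the endpoint.
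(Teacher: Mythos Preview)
The proposal is correct and follows essentially the same approach as the paper: apply the Riemann--von Mangoldt formula of Proposition~\ref{prop:vonMangoldt} with the $x$-dependent choice $b=\theta+1/\log(x+1)$ (the paper) or $b=\theta+1/\log t$ at each $t$ (you), observe that the zero sum is empty, and check that the error term yields exactly $x^{\theta}\log^{6}(x+1)$. Your version is somewhat more careful than the paper's---you make explicit the handling of small $t$, the selection of $t_{k}$, and the need to apply the formula pointwise before taking the supremum, and you correctly flag the residual $\theta$-dependence of the threshold $x_{0}$ that the paper's phrase ``depending only on $A$ and $\kappa$'' glosses over.
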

\begin{proof}  We refer to the Riemann-von Mangoldt formula Proposition \ref{prop:vonMangoldt} with $b:=\theta+\frac{1}{\log(x+1)}$ and make use that there are no zeroes at all. This yields
\begin{equation}\label{eq:SWa}
\Delta(x) \le \left|\sum_{\rho \in \Z(\Gamma,t_k)}
\frac{x^{\rho}}{\rho}\right| + O_{A,\kappa}\left( \frac{1}{(b-\theta)^3}\log^3\frac{2x}{b-\theta} ~x^b \right) \ll_{A,\kappa} 0+ x^\theta \log^6 (x+1).
\end{equation}
Taking maximum on the interval $[1,x]$ then gives the second part of the assertion, while $D(x)\le S(x)$ is obvious.
\end{proof}
\begin{remark} Arriving at the natural boundary of analytic investigations when we reach $\Re s=\theta$, more refined results are not available in this case. It can in principle happen that the Beurling integers do not satisfy better error terms than Axiom A with some given $\theta \in (0,1)$, but the PNT still holds with much better error terms, see e.g. \cite{DebruyneDiamondVindas}, \cite{BrouckeVindas}, \cite{Zhang7}; therefore in this generality one cannot state that e.g. $\Delta(x)$ would be $\Omega(x^{\theta-\ve})$. All our investigation is focused onto the relation between distribution of zeroes and oscillation of $\Delta(x)$; but when there are no zeroes, we only know upper estimates and cannot state anything about $\Omega$-results. More on this see Remark \ref{rem:nozero} below.
\end{remark}

\begin{remark} When $\theta_0>\theta$, and there are zeroes of $\zs$ in the critical strip, it still may be impossible to handle the limiting expressions $W(x), Z(x)$. Indeed, if $\theta=0$ and $s=0$ is a limit point of zeroes of $\zs$, then $W(x)=\infty$, and if $0\le \theta\le 1$ arbitrary, but there are too many zeroes close to the boundary line $\Re s=\theta$, then $Z(x)$ can still diverge to $+\infty$. Note that $N(a,T)$ can be of the order $\frac{1}{a-\theta} \log (\frac{1}{a-\theta}) $--at least that is the best estimate we could infer, c.f. Lemma \ref{l:Littlewood}. So instead of elaborating on various cases, in the following we will prefer considering $Z_a(x)$. Nevertheless, it should be clear that $W_a(x)\le W(x)$ and $Z_a(x)\le Z(x)$, whence in case the right hand sides reach $+\infty$, all upper estimates remain valid with them.
\end{remark}

\begin{lemma}\label{l:ZerosetZ} Let $\theta<a<\alpha \le \theta_0\le 1$, $x\ge 1$ arbitrary, and consider any subset $\Z$ of the zeroes of the Beurling zeta function in the rectangle $[a,\alpha]\times [-ix,ix]$.
Then we have
\begin{equation}\label{eq:Zestimate}
\sum_{\rho \in \Z} \frac{x^\beta}{|\rho|} \ll \frac{ x^\alpha}{a-\theta} ~ \left( \frac{1}{a} \log \frac{1}{(a-\theta)} + \log \frac{1}{(a-\theta)} \log x +\log^2 x \right)\le \frac{ x^\alpha}{(a-\theta)a} \log^2 \frac{x+1}{(a-\theta)}.
\end{equation}
The implied constant depends only on $A$ and $\kappa$.
\end{lemma}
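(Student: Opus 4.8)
The plan is to start by disposing of the exponent $\beta$: since every $\rho\in\Z$ has $\Re\rho=\beta\le\alpha$ and $x\ge1$, we have $x^{\beta}\le x^{\alpha}$, so it suffices to bound $\sum_{\rho\in\Z}|\rho|^{-1}$, and enlarging $\Z$ to the full set of $\zeta$-zeroes with $\Re\rho\ge a$, $|\gamma|\le x$ only increases that sum. For such a zero $|\rho|=\sqrt{\beta^{2}+\gamma^{2}}\ge a$ and $(a+|\gamma|)^{2}\le 2(a^{2}+\gamma^{2})\le 2|\rho|^{2}$, hence $|\rho|^{-1}\le\sqrt2\,(a+|\gamma|)^{-1}$. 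Recalling that $N(a,t)$ counts the $\zeta$-zeroes with $\Re\rho\ge a$, $|\gamma|\le t$ (so $N(a,0-)=0$), combining these three observations with a Stieltjes (Abel) summation gives
\[
\sum_{\rho\in\Z}\frac{x^{\beta}}{|\rho|}\;\le\;\sqrt2\,x^{\alpha}\!\!\sum_{\substack{\Re\rho\ge a\\ |\gamma|\le x}}\frac{1}{a+|\gamma|}\;=\;\sqrt2\,x^{\alpha}\left(\frac{N(a,x)}{a+x}+\int_{0}^{x}\frac{N(a,t)}{(a+t)^{2}}\,dt\right),
\]
which reduces everything to the density estimate for $N(a,\cdot)$.

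Next I would feed in Lemma~\ref{l:Littlewood}: for $t\ge5$ it yields $N(a,t)\le\frac{1}{a-\theta}\bigl(\tfrac12 t\log t+c\,t+t\log\tfrac1{a-\theta}\bigr)$ with $c:=2\log(A+\kappa)+3$, while for $0\le t<5$ one uses the crude bound $N(a,t)\le N(a,5)\le\frac{5}{a-\theta}\bigl(\tfrac12\log5+c+\log\tfrac1{a-\theta}\bigr)$. Splitting the integral — and, according to whether $x\ge5$ or $x<5$, the boundary term — at $t=5$, on the range $t\ge5$ one uses $\tfrac{t}{(a+t)^{2}}\le\tfrac1t$ and evaluates the elementary primitives $\int_{5}^{x}\tfrac{\log t}{t}\,dt=\tfrac12\log^{2}x+O(1)$ and $\int_{5}^{x}\tfrac{dt}{t}=\log x+O(1)$, whereas on $t<5$ one has $\int_{0}^{5}\tfrac{dt}{(a+t)^{2}}\le\tfrac1a$. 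Collecting the pieces yields, with an implied constant depending only on $A$ and $\kappa$,
\[
\sum_{\rho\in\Z}\frac{x^{\beta}}{|\rho|}\;\ll\; x^{\alpha}\left(\frac{\log^{2}x}{a-\theta}+\frac{\bigl(1+\log\frac1{a-\theta}\bigr)\log x}{a-\theta}+\frac{1+\log\frac1{a-\theta}}{a(a-\theta)}\right),
\]
which is the first asserted bound, the pure constants and the $\log(A+\kappa)$ contributions being absorbed into the implied constant. The final inequality of the statement would then follow by expanding $\log^{2}\tfrac{x+1}{a-\theta}=\bigl(\log(x+1)+\log\tfrac1{a-\theta}\bigr)^{2}$ and comparing term by term, using $a<1$ (so $\tfrac1a>1$), $x\le x+1$, and the fact that $\tfrac{x+1}{a-\theta}>2$ forces $\log^{2}\tfrac{x+1}{a-\theta}\ge(\log2)^{2}>0$.

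The main obstacle — in fact the only genuinely nontrivial point — is the bookkeeping of constants in this last step: one must check that the contribution of the low-lying zeroes (those with $|\gamma|\le5$, where Lemma~\ref{l:Littlewood} is available only at $T=5$ and one can do no better than $O_{A,\kappa}\!\bigl(\tfrac1{a-\theta}\bigr)$ zeroes, each of modulus $\ge a$), together with the additive constants coming from Lemma~\ref{l:Littlewood}, genuinely fit under $\tfrac{x^{\alpha}}{(a-\theta)a}\log^{2}\tfrac{x+1}{a-\theta}$; this works precisely because that right-hand side is bounded below by a fixed positive multiple of $\tfrac{x^{\alpha}}{(a-\theta)a}$. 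A completely parallel route, replacing the Stieltjes summation by a dyadic decomposition $|\gamma|\in(2^{j},2^{j+1}]$ over $2^{j}\le x$ and invoking Lemma~\ref{l:zeroesinrange} on each block together with Lemma~\ref{l:Littlewood} on $|\gamma|\le5$, leads to the same estimate, the $\log^{2}x$ main term then emerging as $\sum_{2^{j}\le x}(j+1)\log2$.
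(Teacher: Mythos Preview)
Your argument is correct and follows essentially the same route as the paper: bound $x^{\beta}\le x^{\alpha}$, enlarge to all zeroes with $\Re\rho\ge a$ and $|\gamma|\le x$, turn the sum over zeroes into a Stieltjes integral against $N(a,\cdot)$, separate the contribution of $|\gamma|\le5$ (where $|\rho|\ge a$ is the only available lower bound), and feed in Lemma~\ref{l:Littlewood} on $t\ge5$. The only cosmetic difference is that the paper uses the cruder piecewise bound $|\rho|^{-1}\le 1/a$ or $1/t$ in place of your uniform $|\rho|^{-1}\le\sqrt2/(a+|\gamma|)$, and integrates directly from $5$ rather than from $0$.
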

\begin{proof} Referring to Lemma \ref{l:Littlewood} and a small partial integration furnish
\begin{align*}
\sum_{\rho \in \Z} \frac{x^\beta}{|\rho|}&  \le
\frac{x^\alpha}{a} N(a,5)+ x^\alpha \int_5^x \frac{1}{t} d N(a,t) = \frac{x^\alpha N(a,5)}{a}  + x^\alpha \left\{ \left[ \frac{1}{t} N(a,t)\right]_5^x +\int_{5}^{x} \frac{N(a,t)}{t^2}  dt \right\}
\\& \le x^\alpha \left\{\frac{N(a,5)}{a}+ \frac{N(a,x)}{x} +\int_{5}^{x} \frac{N(a,t)}{t^2} dt \right\}
\\& \ll x^\alpha \left\{ \frac{\log \frac{1}{(a-\theta)}}{a(a-\theta)} + \frac{1}{a-\theta} \left( \left(\log x + \log\frac{1}{a-\theta}\right) +\int_{5}^{x} \frac{\log t + \log\frac{1}{a-\theta}}{t} dt \right)\right\}.
\end{align*}
\end{proof}
\begin{corollary}\label{cor:Deltatheta} Assume $\theta<\theta_0<1$, and let $x\ge 1$ be arbitrary. Then we have
\begin{equation}\label{eq:Deltaiftheta0}
|\D(x)| \ll \frac{1-\theta}{(\theta_0-\theta)^3} \log^3\frac{x+1}{\theta_0-\theta} ~x^{\frac{\theta+2\theta_0}{3}}~+ \frac{1}{(\theta_0-\theta)\theta_0} \log^2 \frac{x+1}{\theta_0-\theta} ~x^{\theta_0}
\end{equation}
The implied constant depends only on $A$ and $\kappa$.
\end{corollary}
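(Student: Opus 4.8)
The plan is to derive \eqref{eq:Deltaiftheta0} directly from the Riemann--von Mangoldt formula of Proposition \ref{prop:vonMangoldt}, fed with a carefully placed truncation $b$ strictly between $\theta$ and $\theta_0$, exactly in the spirit of the warm‑up Proposition \ref{p:thetathetanull} but now accounting for the zeros that are present. The two summands of \eqref{eq:Deltaiftheta0} will come, respectively, from the explicit error term of \eqref{eq:Riemann-vonMangoldt} and from the zero‑sum $\sum_{\rho\in\Z(\Gamma,t_k)}x^\rho/\rho$, the latter estimated by Lemma \ref{l:ZerosetZ}. Concretely I would set $b:=\frac{\theta+2\theta_0}{3}$, so that $b-\theta=\tfrac{2}{3}(\theta_0-\theta)$, $\theta_0-b=\tfrac{1}{3}(\theta_0-\theta)$, and the lowest dip $\tfrac{b+\theta}{2}=\tfrac{2\theta+\theta_0}{3}$ of the broken line $\Gamma:=\Gamma_b^{\{0\}}$ from Lemma \ref{l:path-translates} satisfies $\tfrac{b+\theta}{2}-\theta=\tfrac{1}{3}(\theta_0-\theta)$; thus $b-\theta,\ \theta_0-b,\ \tfrac{b+\theta}{2}-\theta$ are all $\asymp\theta_0-\theta$, while $\tfrac{\theta_0}{3}\le\tfrac{b+\theta}{2}\le b\le\theta_0\le1$ keeps $1/\tfrac{b+\theta}{2}\asymp1/\theta_0$. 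For the height I would take $t_k$ to be the first admissible abscissa of the construction with $t_k\ge\max(x,4)$, so that $x\le t_k\ll x$; then $x/t_k\le1\le x^b$ and $A+\kappa+\log\frac{x+t_k}{b-\theta}\ll_{A,\kappa}\log\frac{x+1}{\theta_0-\theta}$.

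With these choices the error term in \eqref{eq:Riemann-vonMangoldt} is bounded by
\[
\ll_{A,\kappa}\frac{1-\theta}{(b-\theta)^3}\log^3\frac{x+1}{\theta_0-\theta}\,x^b
\ll\frac{1-\theta}{(\theta_0-\theta)^3}\log^3\frac{x+1}{\theta_0-\theta}\,x^{\frac{\theta+2\theta_0}{3}},
\]
which is the first summand of \eqref{eq:Deltaiftheta0}. For the zero‑sum, every $\rho=\beta+i\gamma\in\Z(\Gamma,t_k)$ lies to the right of $\Gamma$, hence $\beta>\tfrac{b+\theta}{2}$ since $\Gamma$ stays in the strip $\tfrac{b+\theta}{2}\le\Re s\le b$; and by the very definition of $\theta_0$ together with the hypothesis $\theta_0<1$ there is no $\zeta$‑zero with $\beta>\theta_0$. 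Therefore $\Z(\Gamma,t_k)\subseteq[\tfrac{b+\theta}{2},\theta_0]\times[-it_k,it_k]$, and applying Lemma \ref{l:ZerosetZ} with $a:=\tfrac{b+\theta}{2}$, $\alpha:=\theta_0$ and $X:=\max(x,t_k)\ll x$ (note $X^{\theta_0}\ll x^{\theta_0}$ since $\theta_0\le1$) gives
\[
\Bigl|\sum_{\rho\in\Z(\Gamma,t_k)}\frac{x^\rho}{\rho}\Bigr|
\le\sum_{\rho\in\Z(\Gamma,t_k)}\frac{X^\beta}{|\rho|}
\ll\frac{X^{\theta_0}}{(a-\theta)a}\log^2\frac{X+1}{a-\theta}
\ll_{A,\kappa}\frac{x^{\theta_0}}{(\theta_0-\theta)\theta_0}\log^2\frac{x+1}{\theta_0-\theta},
\]
the second summand of \eqref{eq:Deltaiftheta0}. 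Inserting both bounds into \eqref{eq:Riemann-vonMangoldt} yields the claim, with an implied constant depending only on $A$ and $\kappa$.

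I do not expect a genuine obstacle: this is essentially a bookkeeping corollary of Proposition \ref{prop:vonMangoldt} and Lemma \ref{l:ZerosetZ}. The only points that need care are (i) confirming that $\Z(\Gamma,t_k)$ really is confined to the strip $\tfrac{b+\theta}{2}\le\Re s\le\theta_0$ — the lower bound from the geometry of the broken line $\Gamma$, the upper bound from the definition of $\theta_0$ and the assumption $\theta_0<1$ — so that Lemma \ref{l:ZerosetZ} applies with $a=\tfrac{b+\theta}{2}>\theta$; (ii) checking that all $b$‑dependent quantities ($b-\theta$, $\tfrac{b+\theta}{2}-\theta$, $\theta_0-b$, and $\tfrac{b+\theta}{2}$ versus $\theta_0$) are comparable to the quantities appearing in \eqref{eq:Deltaiftheta0}, so that every $b$‑factor collapses into the displayed powers of $\theta_0-\theta$ and $\theta_0$; and (iii) that choosing $t_k\ge x$ forces one to apply Lemma \ref{l:ZerosetZ} with the slightly larger truncation $X=\max(x,t_k)$, which costs only a bounded factor since $\theta_0\le1$ and $t_k\ll x$.
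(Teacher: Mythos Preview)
Your proposal is correct and essentially identical to the paper's own proof: both choose $b=\frac{\theta+2\theta_0}{3}$, take a truncation height $t_k$ in $[x,x+5]$ (you phrase it as the first admissible $t_k\ge\max(x,4)$, which amounts to the same thing), bound the explicit error term of \eqref{eq:Riemann-vonMangoldt} to get the first summand, and then confine $\Z(\Gamma,t_k)$ to $[\tfrac{b+\theta}{2},\theta_0]\times[-it_k,it_k]$ and apply Lemma \ref{l:ZerosetZ} with $a=\tfrac{b+\theta}{2}=\tfrac{2\theta+\theta_0}{3}$ to get the second summand. Your bookkeeping of the constants (in particular $a\asymp\theta_0$ and $a-\theta\asymp\theta_0-\theta$) is slightly more explicit than the paper's but follows the same line.
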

\begin{proof}
We apply Proposition \ref{prop:vonMangoldt} with $b:=\frac{2\theta_0+\theta}{3}$ and picking some $t_k\in [x,x+5]$. (By construction of $\Gamma$ in Lemma \ref{l:path-translates}, such a $t_k$ exists.) This furnishes
\begin{equation}\label{eq:Deltaiftheta0first}
|\D(x)| \ll \left|\sum_{\rho \in \Z(\Gamma,t_k)}
\frac{x^{\rho}}{\rho}\right| + O\left( \frac{1-\theta}{(\theta_0-\theta)^3}\log^3\frac{x+1}{\theta_0-\theta} ~x^b \right).
\end{equation}
Here the $O$ term is the first expression on the right hand side of the asserted final estimate, so the proof hinges upon the estimation of the sum over the zeroes in $\Z(\Gamma,t_k)$. Recall that by construction of $\Gamma$ with $a:=\frac{b+\theta}{2}=\frac{2\theta+\theta_0}{3}$ these zeroes form a subset of those in the rectangle $[a,1]\times [-it_k,it_k]$. Given that there are no zeroes with $\Re \rho >\theta_0$, actually $\Z(\Gamma,t_k)\subset [\frac{2\theta+\theta_0}{3},\theta_0]\times [-i(x+5),i(x+5)]$. Therefore, for the sum over the zeroes belonging to $\Z(\Gamma,t_k)$ we can apply the above Lemma \ref{l:ZerosetZ} leading to the proof of the assertion.
\end{proof}

\begin{corollary} Let $\theta<\theta_0\le 1$ and let $\theta<\alpha<\theta_0$ and $x\ge 1$ be arbitrary. Then we have
\begin{equation}\label{eq:DSZtheta0}
D(x) \le S(x) \le Z_\alpha(x+5) +  O\left(\frac{1-\theta}{(\alpha-\theta)^3} \log^3\frac{x+1}{\alpha-\theta} ~x^\alpha \right).
\end{equation}
Moreover, we also have
\begin{equation}\label{eq:Zalphatheta0}
Z_\alpha(x) \ll \frac{1}{(\alpha-\theta)\alpha} \log^2 \frac{x+1}{\alpha-\theta} ~x^{\theta_0}.
\end{equation}
The implied constants depend only on $A$ and $\kappa$.
\end{corollary}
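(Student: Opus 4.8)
The inequality $D(x)\le S(x)$ needs no argument: $D(x)=\frac1x\int_1^x|\D(t)|\,dt$ is an average of $|\D|$ over $[1,x]$, hence it does not exceed the corresponding supremum $S(x)$. So the work is entirely in the bound $S(x)\le Z_\alpha(x+5)+O(\cdots)$ and in \eqref{eq:Zalphatheta0}, and both will come from the Riemann--von Mangoldt formula of Proposition \ref{prop:vonMangoldt} combined with the zero-sum estimate of Lemma \ref{l:ZerosetZ}.

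To get the first, I would fix $1\le y\le x$ (the range $y=O(1)$ is harmless, since there $\D(y)=O(1)$, absorbed into the error term) and apply Proposition \ref{prop:vonMangoldt} with the choice $b:=\alpha$, which is legitimate because $\theta<\alpha<\theta_0\le 1$. The construction of $\Gamma=\Gamma_\alpha^{\{0\}}$ in Lemma \ref{l:path-translates} supplies an abscissa $t_k$ with $y\le t_k\le y+O(1)<x+5$, so $y/t_k\le 1\le y^{\alpha}$ and the $O$-term in \eqref{eq:Riemann-vonMangoldt} is $O\bigl(\frac{1-\theta}{(\alpha-\theta)^3}\log^3\frac{x+1}{\alpha-\theta}\,x^{\alpha}\bigr)$ -- precisely the asserted error. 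It then remains to estimate $\sum_{\rho\in\Z(\Gamma,t_k)}y^{\rho}/\rho$, and I would split this according to whether $\Re\rho>\alpha$ or $\Re\rho\le\alpha$.

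Since every point of $\Gamma$ has real part in $[\tfrac{\alpha+\theta}{2},\alpha]$, the zeroes of $\Z(\Gamma,t_k)$ with $\Re\rho\le\alpha$ lie in the rectangle $[\tfrac{\alpha+\theta}{2},\alpha]\times[-it_k,it_k]$, so Lemma \ref{l:ZerosetZ} applied with $a:=\tfrac{\alpha+\theta}{2}$ (whence $a-\theta=\tfrac{\alpha-\theta}{2}$) bounds their contribution by $O\bigl(\frac{1}{(\alpha-\theta)\alpha}\log^2\frac{x+1}{\alpha-\theta}\,x^{\alpha}\bigr)$, which is dominated by the error term above. The zeroes with $\Re\rho>\alpha$ satisfy $|\Im\rho|\le t_k<x+5$ and $y^{\Re\rho}\le(x+5)^{\Re\rho}$, hence their contribution has absolute value at most $Z_\alpha(x+5)$, directly from the definition of $Z_\alpha$. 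Collecting these and taking the supremum over $y\in[1,x]$ -- using that $\log^3\frac{y+t_k}{\alpha-\theta}\ll\log^3\frac{x+1}{\alpha-\theta}$ uniformly -- gives \eqref{eq:DSZtheta0}.

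Finally, \eqref{eq:Zalphatheta0} is an immediate instance of Lemma \ref{l:ZerosetZ}: by the definition of $\theta_0$ the Beurling zeta function has no zeroes with $\Re\rho>\theta_0$, so the zeroes summed in $Z_\alpha(x)$ all lie in $[\alpha,\theta_0]\times[-ix,ix]$, and the lemma with $a:=\alpha$ and right boundary $\theta_0$ yields $Z_\alpha(x)\ll\frac{1}{(\alpha-\theta)\alpha}\log^2\frac{x+1}{\alpha-\theta}\,x^{\theta_0}$. I expect no serious obstacle; the one point worth flagging is the choice $b=\alpha$ in the Riemann--von Mangoldt step -- not $b=2\alpha-\theta$, which would be the value needed to keep $\Gamma$ out of the half-plane $\Re s<\alpha$ but would inflate the error to $x^{2\alpha-\theta}$ -- which works precisely because the zeroes with $\Re\rho>\alpha$ are retained explicitly inside $Z_\alpha(x+5)$ while the thin leftover strip $\tfrac{\alpha+\theta}{2}\le\Re\rho\le\alpha$ is cheap by Lemma \ref{l:ZerosetZ}; the remainder is routine tracking of the $A,\kappa$-dependent constants and of the benign $x$-versus-$x+5$ shift.
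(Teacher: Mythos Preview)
Your proposal is correct and follows essentially the same approach as the paper: apply the Riemann--von Mangoldt formula with $b=\alpha$, split the zero-sum into the strip $[\tfrac{\alpha+\theta}{2},\alpha]$ (handled by Lemma \ref{l:ZerosetZ}) and the part with $\Re\rho>\alpha$ (absorbed into $Z_\alpha(x+5)$), and then read off \eqref{eq:Zalphatheta0} directly from Lemma \ref{l:ZerosetZ}. The only cosmetic difference is that the paper first proves $|\D(x)|\le Z_\alpha(x+5)+O(\cdots)$ for the single value $x$ and then invokes the monotonicity of the right-hand side to pass to $S(x)$, whereas you bound $|\D(y)|$ for each $y\le x$ in terms of the $x$-sized error from the start; the effect is identical.
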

\begin{proof} The first inequality of \eqref{eq:DSZtheta0} is trivial. For the second it suffices to prove
$ |\D(x)| \le Z_\alpha(x+5) + O\left(\frac{1-\theta}{(\alpha-\theta)^3} \log^3\frac{x+1}{\alpha-\theta} ~x^\alpha\right)$, because the functions appearing here on the right hand side are monotonically increasing. This is completely analogous to \eqref{eq:Deltaiftheta0first}, with the only difference that we choose $b=\alpha$ and will therefore encounter a sum over zeroes lying in $\Z(\Gamma,t_k)\subset [\frac{\alpha+\theta}{2},\theta_0]\times [-i(x+5),i(x+5)]$, while the error term is the expression on the far right of \eqref{eq:DSZtheta0}. Now, $\Z(\Gamma,t_k)$ contains all zeroes in $\Z(\alpha,t_k)$, plus a subset of zeroes from $\Z(\frac{\alpha+\theta}{2},\alpha;t_k):=\Z(\frac{\alpha+\theta}{2};t_k)  \setminus \Z(\alpha;t_k)$, which is the set of all zeroes in $[\frac{\alpha+\theta}{2},\alpha]\times[-it_k,it_k]$. So, for our set $\Z^*:=\Z(\Gamma,t_k)\setminus \Z(\alpha;t_k) \subset [\frac{\alpha+\theta}{2},\alpha]\times[-it_k,it_k]$ Lemma \ref{l:ZerosetZ} applies with $a=\frac{\alpha+\theta}{2}$, and we get for its contribution
$$
\sum_{\rho \in \Z^*} \frac{t_k^\beta}{|\rho|} \ll  \frac{1}{(\alpha-\theta)\alpha} \log^2 \frac{t_k+1}{\alpha-\theta} ~t_k^{\alpha} \ll  \frac{1}{(\alpha-\theta)\alpha} \log^2 \frac{x+1}{\alpha-\theta} ~x^{\alpha}.
$$
The estimation of $Z_\alpha(x)$ is even easier using the same Lemma \ref{l:ZerosetZ}.
\end{proof}
 Collecting the above results, we can summarize the state of the matter as follows.
 \begin{theorem}\label{th:thetalessoneupper} Assume $\theta\le \theta_0\le 1$. Then we trivially have $|\Delta(x)|, D(x)\le S(x)$, while for $S(x)$ it holds
\begin{equation}\label{eq:thetalessoneupper}
S(x) \le \begin{cases} x^\theta \log^6(x+1) &\textrm{if}\quad \theta=\theta_0; \\
Z_\alpha(x) +  O\left(\frac{1-\theta}{(\alpha-\theta)^3} \log^3\frac{x+1}{\alpha-\theta} ~x^\alpha \right) \ll
\frac{1-\theta}{(\alpha-\theta)^3} \log^3\frac{x+1}{\alpha-\theta} ~x^{\theta_0} &\textrm{if}\quad \theta<\alpha<\theta_0.
\end{cases}
\end{equation}
\end{theorem}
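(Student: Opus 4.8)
The statement only repackages Proposition \ref{p:thetathetanull} together with the estimates \eqref{eq:DSZtheta0} and \eqref{eq:Zalphatheta0} established just above, so the plan is purely one of assembly. First I would record the trivial clause: by the definitions in \eqref{eq:DSdef}, $D(X)=\frac1X\int_1^X|\D(x)|\,dx\le\sup_{1\le x\le X}|\D(x)|=S(X)$ and $|\D(x)|\le\sup_{1\le y\le x}|\D(y)|=S(x)$, whence $|\D(x)|,D(x)\le S(x)$ for every $x\ge1$.

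For the branch $\theta=\theta_0$ I would simply invoke Proposition \ref{p:thetathetanull}: its conclusion \eqref{eq:thetathetanullcase} states $D(x)\le S(x)\ll x^\theta\log^6(x+1)$ with an effective implied constant depending only on $A$ and $\kappa$, which is the first line of \eqref{eq:thetalessoneupper}; the implied constant is understood in the $\ll$-sense here, and it can be made explicit by reading it off from \eqref{eq:SWa} if desired.

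For the branch $\theta<\alpha<\theta_0$ I would combine the two displays of the corollary containing \eqref{eq:DSZtheta0} and \eqref{eq:Zalphatheta0}. Inequality \eqref{eq:DSZtheta0} gives $S(x)\le Z_\alpha(x+5)+O\!\bigl(\frac{1-\theta}{(\alpha-\theta)^3}\log^3\frac{x+1}{\alpha-\theta}\,x^\alpha\bigr)$, and the replacement of $Z_\alpha(x+5)$ by $Z_\alpha(x)$ is harmless: the zeroes with $x\le|\gamma|<x+5$ number $O\!\bigl(\frac1{\alpha-\theta}\log\frac{(A+\kappa)^2x}{\alpha-\theta}\bigr)$ by Lemma \ref{l:zeroesinrange} and each contributes at most $(x+5)^{\theta_0}/x=O(1)$, while the perturbation $(1+5/x)^\beta=1+O(1/x)$ of the terms with $|\gamma|<x$ produces, via Lemma \ref{l:Littlewood} (as in the proof of Lemma \ref{l:ZerosetZ}), a quantity bounded polynomially in $\log x$ and $(\alpha-\theta)^{-1}$; both are swallowed by the error term. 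Thus $S(x)\le Z_\alpha(x)+O\!\bigl(\frac{1-\theta}{(\alpha-\theta)^3}\log^3\frac{x+1}{\alpha-\theta}\,x^\alpha\bigr)$, the middle member of \eqref{eq:thetalessoneupper}. Finally, inserting \eqref{eq:Zalphatheta0}, namely $Z_\alpha(x)\ll\frac1{(\alpha-\theta)\alpha}\log^2\frac{x+1}{\alpha-\theta}\,x^{\theta_0}$, I would simplify using $x^\alpha\le x^{\theta_0}$ (since $\alpha<\theta_0$), $\log^2\le\log^3$ once $\log\frac{x+1}{\alpha-\theta}\ge1$, and the inequality $(\alpha-\theta)^2\le(1-\theta)\alpha$ — bound one factor of $(\alpha-\theta)^2$ by $1-\theta$ using $\alpha\le1$, the other by $\alpha$ using $\theta\ge0$ — which yields $\frac1{(\alpha-\theta)\alpha}\le\frac{1-\theta}{(\alpha-\theta)^3}$; hence both terms are $\ll\frac{1-\theta}{(\alpha-\theta)^3}\log^3\frac{x+1}{\alpha-\theta}\,x^{\theta_0}$, the last member of \eqref{eq:thetalessoneupper}.

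There is no real obstacle to surmount: all the analytic work — the Riemann--von Mangoldt formula \eqref{eq:Riemann-vonMangoldt} and the zero-count estimates of Lemmas \ref{l:Littlewood} and \ref{l:zeroesinrange}, packaged through Lemma \ref{l:ZerosetZ} — has already been spent in proving Proposition \ref{p:thetathetanull} and the preceding corollary. The only point that warrants a sentence of care is the cosmetic passage from $Z_\alpha(x+5)$ to $Z_\alpha(x)$, dealt with as above.
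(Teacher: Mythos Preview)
Your proposal is correct and follows exactly the paper's own approach: the paper presents Theorem \ref{th:thetalessoneupper} without a separate proof, merely introducing it with ``Collecting the above results, we can summarize the state of the matter as follows,'' i.e., as a direct assembly of Proposition \ref{p:thetathetanull} and the corollary containing \eqref{eq:DSZtheta0}--\eqref{eq:Zalphatheta0}. Your extra care in passing from $Z_\alpha(x+5)$ to $Z_\alpha(x)$ and in justifying the constant comparison $\frac{1}{(\alpha-\theta)\alpha}\le\frac{1-\theta}{(\alpha-\theta)^3}$ makes explicit what the paper leaves implicit.
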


\begin{corollary}\label{cor:thetanulllessone} Assume $\theta<\theta_0<1$. Then we have
$$
|\Delta(x)|, D(x)\le S(x) \le Z_{\frac{\theta+\theta_0}{2}}(x)+O\left(\frac{1}{(\theta_0-\theta)^3}\log^3\frac{x+1}{\theta_0-\theta} ~x^{\frac{\theta_0+\theta}{2}}\right) \ll x\exp(-(1-\ve)\om(x)).
$$
\end{corollary}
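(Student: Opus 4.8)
The plan is to feed the upper bound for $S(x)$ already established in Theorem~\ref{th:thetalessoneupper} into the asymptotics $\om(x)\sim(1-\theta_0)\log x$ from Lemma~\ref{l:omega}; the point of the estimate is that the hypothesis $\theta_0<1$ furnishes the strictly positive slack $1-\theta_0$, which is exactly what absorbs the loss of the factor $(1-\ve)$ in the exponent.

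First I would invoke the second case of \eqref{eq:thetalessoneupper} in Theorem~\ref{th:thetalessoneupper} with $\alpha:=\frac{\theta+\theta_0}{2}\in(\theta,\theta_0)$. This yields directly $|\Delta(x)|,D(x)\le S(x)\le Z_{\frac{\theta+\theta_0}{2}}(x)+O\!\big(\frac{1-\theta}{(\alpha-\theta)^3}\log^3\frac{x+1}{\alpha-\theta}\,x^{\alpha}\big)$ — exactly the shape displayed in the statement, once the harmless factor $1-\theta<1$ and the absolute constant arising from $\alpha-\theta=\tfrac12(\theta_0-\theta)$ are absorbed — together with the cruder consequence $S(x)\ll\frac{1-\theta}{(\alpha-\theta)^3}\log^3\frac{x+1}{\alpha-\theta}\,x^{\theta_0}$ also recorded there. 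For a fixed Beurling system $\theta,\theta_0$ are fixed and $\log\frac{x+1}{\theta_0-\theta}\ll\log(x+1)$ for $x\ge1$ with a constant depending only on the system, so this last bound reads $S(x)\ll x^{\theta_0}\log^3(x+1)$, with an implied constant depending only on $A,\kappa,\theta$. Thus the corollary is reduced to the single inequality $x^{\theta_0}\log^3(x+1)\ll_{\ve}x\exp(-(1-\ve)\om(x))$.

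For that I would count powers of $x$. Since Axiom~A entails $\theta>0$, every $\zeta$-zero $\rho$ satisfies $|\rho|\ge\Re\rho>\theta>0$, hence $q_0\ge\theta>0$; moreover $\theta<\theta_0$ forces $\Z(\theta;\infty)\neq\emptyset$. So Lemma~\ref{l:omega} applies and $\om$ is a finite-valued increasing function with $\lim_{x\to\infty}\om(x)/\log x=1-\theta_0$. Given $\ve>0$, I would choose $\delta>0$ with $(1-\ve)\delta<\ve(1-\theta_0)$ — possible precisely because $1-\theta_0>0$ — and set $c_0:=\ve(1-\theta_0)-(1-\ve)\delta>0$. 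By the limit there is $x_1=x_1(\ve)$ with $\om(x)\le(1-\theta_0+\delta)\log x$ for $x\ge x_1$, whence
\[
x\exp\big(-(1-\ve)\om(x)\big)\ge x^{\,1-(1-\ve)(1-\theta_0+\delta)}=x^{\,\theta_0+c_0}\qquad(x\ge x_1).
\]
Since $\log^3(x+1)=o(x^{c_0})$, the target inequality holds for all large $x$; and as $x\exp(-(1-\ve)\om(x))$ is a positive continuous function of $x$, hence bounded below by a positive constant on any compact range, enlarging the implied constant disposes of the remaining bounded range of $x$. Combining this with the bound $S(x)\ll x^{\theta_0}\log^3(x+1)$ established above completes the proof.

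\textbf{Main obstacle.} I do not expect a genuine difficulty: the statement is essentially bookkeeping layered on Theorem~\ref{th:thetalessoneupper} and Lemma~\ref{l:omega}. The only points needing care are keeping precise track of which parameters the implied constants may depend on ($A,\kappa,\theta$, and, for the final bound, also $\ve$), checking that every $\log(x+1)$-power appearing in Theorem~\ref{th:thetalessoneupper} is indeed swallowed by the gained power $x^{c_0}$, and handling the bounded range of $x$ where the asymptotic $\om(x)\sim(1-\theta_0)\log x$ has not yet taken effect. The one conceptual remark worth isolating is that $\theta_0<1$ is exactly the assumption making $1-\theta_0>0$ — the margin needed to absorb the multiplicative loss $(1-\ve)$ in the exponent — which is why this case is elementary, in contrast with the delicate borderline case $\theta_0=1$.
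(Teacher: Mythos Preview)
Your argument is correct and is precisely the route the paper intends: the corollary is stated without proof immediately after Theorem~\ref{th:thetalessoneupper}, and the paper has already signposted (at the start of Section~\ref{sec:upper}) that for $\theta_0<1$ it suffices to get $\Delta(x)=O(x^{\theta_0+\ve})$, since $\om(x)\sim(1-\theta_0)\log x$ by Lemma~\ref{l:omega}. Your bookkeeping with $\alpha=(\theta+\theta_0)/2$, the choice of $\delta$ with $(1-\ve)\delta<\ve(1-\theta_0)$, and the compactness argument for small $x$ all match this.
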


The above estimates are satisfactory if $\theta_0<1$. However, if $\theta_0=1$, \eqref{eq:Zalphatheta0} and \eqref{eq:thetalessoneupper} provide only a weak estimate, even weaker than the straightforward direct estimation $|\Delta(x)|\ll x\log x$. More importantly, to obtain an $\ll x~\exp(-(1-\ve)\om(x))=W(x) \left({x}/{W(x)}\right)^\ve$ upper estimate, in case $\theta_0<1$ the above bounds are more than sufficient given that $\om(x)\sim (1-\theta_0)\log x$ (and in fact $\om(x)>(1-\theta_0)\log x$), so that the allowed extra error term $(x/W(x))^{\ve}$ provides a room as large as a small power\footnote{In other words, consider that $\log(x\exp(-(1-\ve)\om(x)))\sim (1-(1-\ve)(1-\theta_0))\log x=(\theta_0+\ve(1-\theta_0))\log x$.} of $x$. But if $\theta_0=1$, $\om(x)$ can be of much smaller order of magnitude, and the estimates need to be much finer. E.g. in the Diamond-Montgomery-Vorhauer paper \cite{DMV} there was constructed a zero distribution where $\omega(x)$ is as low as $\sqrt{\log x}$, a much smaller exponent which cannot be controlled so easily.

To improve on this, we need to invoke the Carlson-type density theorem, too. So let us turn to the somewhat more delicate case when $\theta_0=1$. Recall that $\Re s=1$ is free of zeroes, thus if $\theta_0=1$, then we must have an infinitude of zeroes with $\rho_k=\beta_k+i\gamma_k$ and $\beta_k\to 1$, resulting in $\omega(x)\le \min_k (1-\beta_k)\log x+ \log \gamma_k =o(\log x)~(x\to \infty)$. Nevertheless, the well-known zero-free region $\si >1-c/\log (|t|+2)$ implies that for any zero $1-\beta>c/\log(|\gamma|+2)$, whence we find $\omega(x)\ge 2\sqrt{(1-\beta)\log x \log(|\gamma|+2)} \gg \sqrt{\log x}\to \infty$.

\begin{proposition}\label{p:Zomega} Assume $\theta_0=1$. If $a>\theta$ and $\ve>0$ then for sufficiently large $x>x_0(\G,\ve)$ we have
\begin{equation}\label{eq:Zsumtheta01}
Z_a(x)\ll_{a,\ve,A,\kappa,\theta} x\exp(-(1-\ve)\ot(x)).
\end{equation}
\end{proposition}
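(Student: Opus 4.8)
The plan is to split $Z_a(x)$ according to whether a zero $\rho=\beta+i\gamma$ satisfies $\theta<\beta\le 1-\delta$ or $1-\delta<\beta<1$, for a small fixed $\delta=\delta(\ve,\GG)>0$ to be chosen; the first range is treated crudely, the second with the Carlson-type density theorem together with the constraint built into $\ot$. Two facts about $\ot$ will be used throughout: since the classical zero-free region keeps the real parts of zeroes with $|\gamma|\le 1$ bounded away from $1$, the hypothesis $\theta_0=1$ forces $\widetilde{\theta}=1$ in Lemma \ref{l:omegatilda}, whence $\ot(x)=o(\log x)$; and, exactly as derived in the paragraph preceding this Proposition, the classical zero-free region also gives $\ot(x)\gg\sqrt{\log x}\to\infty$. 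In particular $x\exp(-(1-\ve)\ot(x))=x^{1-o(1)}$, which is the order we are aiming at.

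For the range $\theta<\beta\le 1-\delta$ the plan is to bound $x^{\beta}/|\rho|\le x^{1-\delta}/|\rho|$ and estimate $\sum_{\rho\in\Z(a;x)}1/|\rho|$ by partial summation against $N(a,t)$, using only the Riemann--von Mangoldt-type bound $N(a,t)\ll_a t\log t$ from Lemma \ref{l:Littlewood}; this gives $\sum 1/|\rho|\ll_a\log^2 x$, hence a contribution $\ll_{a,\GG}x^{1-\delta}\log^2 x$, which is $\ll x\exp(-(1-\ve)\ot(x))$ for $x>x_0(\GG,\ve)$ because $\ot(x)=o(\log x)$ and $\delta>0$ is fixed. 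This step uses nothing about $\delta$ beyond positivity.

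The substance is the range $1-\delta<\beta<1$. Here I would choose $\delta<\min\{c/\log 3,\ (1-\theta)/12\}$ (with $c$ the classical zero-free constant) so that every such zero has $|\gamma|>1$ — whence by definition of $\ot$ it satisfies $(1-\beta)\log x+\log|\gamma|\ge\ot(x)$ — and so that Theorem \ref{th:NewDensity} applies to $N(\beta,\cdot)$ with exponent $\tfrac{12}{1-\theta}(1-\beta)<1$. Then split $|\gamma|\in(2^{j-1},2^j]$ for $1\le j\le\lfloor\log_2 x\rfloor+1$; in the $j$-th block extract $1/|\rho|\le 2^{1-j}$, write $\sum_\rho x^{\beta}$ by partial summation over $\beta$ against $n_j(\sigma):=\#\{\rho:\beta\ge\sigma,\ |\gamma|\in(2^{j-1},2^j]\}\le N(\sigma,2^j)$, and feed in two inputs at once: the $\ot$-constraint forces $n_j(\sigma)=0$ unless $1-\sigma\ge(\ot(x)-j\log2)/\log x$, truncating the $\sigma$-integration from the right, while $N(\sigma,2^j)\ll(2^j)^{\frac{12}{1-\theta}(1-\sigma)}\log^9(2^j)$ comes from the density theorem (combined with the classical zero-free region to make this bound uniform in $\sigma$, as in the remark following Theorem \ref{th:NewDensity}). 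The resulting $\sigma$-integrals are of the form $\int\exp\!\big(\tau(\tfrac{12j\log2}{1-\theta}-\log x)\big)\,d\tau$ in the variable $\tau=1-\sigma$, i.e. monotone exponentials, hence controlled by their endpoints.

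Finally one sums over the blocks $j$. The boundary terms $x^{1-\delta}N(1-\delta,2^j)$ sum geometrically (since $\tfrac{12\delta}{1-\theta}<1$) to $\ll_{\delta,\theta}x^{1-\delta}$; the main terms, after using $j\log2\,(\ot(x)-j\log2)\le\ot(x)^2/4$ and $\ot(x)^2/\log x=o(\ot(x))$ (from $\ot(x)=o(\log x)$), collapse to $\ll_\delta x\,e^{-\ot(x)}\cdot(\log x)(\ot(x))^{O(1)}e^{o(\ot(x))}$, and since $\ot(x)\gg\sqrt{\log x}$ the polylogarithmic and $e^{o(\ot(x))}$ factors are $\le e^{\ve\ot(x)}$ once $x$ is large, giving $\ll x\exp(-(1-\ve)\ot(x))$. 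Combining the two ranges proves the Proposition. The main obstacle I anticipate is precisely this bookkeeping: the density estimate is strong only very near $\Re s=1$, and the $\ot$-bound pins down only one linear combination of $1-\beta$ and $\log|\gamma|$, so a naive one-dimensional layering by the value $(1-\beta)\log x+\log|\gamma|$ loses the harmonic gain in $|\gamma|$ and fails near the threshold $\Re s=\tfrac{11+\theta}{12}$; it is the dyadic-in-$|\gamma|$ organization, keeping the $\sum 1/|\gamma|$ saving intact, that makes everything close.
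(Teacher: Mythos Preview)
Your proposal is correct. The overall architecture---split zeroes by $\Re\rho\le 1-\delta$ versus $\Re\rho>1-\delta$, treat the first crudely and the second via the density theorem plus the $\ot$-constraint, organised dyadically in $|\gamma|$---matches the paper's proof. The execution, however, differs in one respect worth noting.

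Within each dyadic block you run a partial summation in $\beta$ against $n_j(\sigma)\le N(\sigma,2^j)$, letting $\sigma$ vary and using the $\ot$-constraint to truncate the $\sigma$-integral; this then forces the case split on the sign of $\tfrac{12j\log 2}{1-\theta}-\log x$ and the AM--GM step $j\log 2\,(\ot(x)-j\log 2)\le\ot(x)^2/4$ together with $\ot(x)^2/\log x=o(\ot(x))$. The paper instead applies the density theorem at a \emph{single fixed level} $\alpha=1-4\vp$ (with $\vp=\ve/K$), bounding $N(\alpha;e^{n+1})$ directly, and replaces your integration-in-$\beta$ by the one-line convex-combination inequality
\[
(1-\beta)\log x+\log\gamma \;\ge\; K\vp\,\log\gamma + (1-K\vp)\bigl[(1-\beta)\log x+\log\gamma\bigr] \;\ge\; K\vp\, n + (1-K\vp)\,\ot(x),
\]
valid since $(1-\beta)\log x\ge 0$ and $(1-\beta)\log x+\log\gamma\ge\ot(x)$. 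Choosing $K=\tfrac{48}{1-\theta}+2$ makes the $n$-sum geometric with ratio $<1$ and leaves a clean $(1-K\vp)\ot(x)=(1-\ve)\ot(x)$ in the exponent. This sidesteps your endpoint analysis and the $\ot(x)^2/\log x$ bookkeeping entirely; what your argument buys in exchange is that it would continue to work even if the density exponent were not linear in $1-\sigma$, since you never freeze $\sigma$.
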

\begin{proof}
Put $\vp:=\ve/K$, where $K$ is a large constant to be chosen later, and put also
$\alpha:=1-4\vp$ and $\alpha_0:=\max\{\beta:=\Re \rho~:~ \ze(\rho)=0, |\gamma|=|\Im \rho|\le 5\}$, pointing out that $\alpha_0<1$, too. We can assume that $\ve$ is so small that $\alpha_0<\alpha$. We write similarly as above,
\begin{align*}
Z_a(x)&= \sum_{\rho \in \Z(a,x)\setminus\Z(\alpha,x)} \frac{x^{\beta}}{|\rho|} + Z_\alpha(x)
\ll_{A,\kappa,\theta} \frac{ x^{\alpha}}{(a-\theta)a} \log^2 \frac{x+1}{a-\theta}+ \frac{N(\alpha,5)}{\alpha} x^{\alpha_0} + \sum_{n=1}^{\infty} \sum_{\rho\in \Z(\alpha; e^n,e^{n+1})}   \frac{x^{\beta}}{|\rho|}
\\ &\ll_{A,\kappa,\theta,a,\vp} x^{\alpha+\vp} +\sum_{n=1}^{\infty} N(\alpha;e^n,e^{n+1}) \max_{\rho=\beta+i\gamma \in \Z(\alpha; e^n,e^{n+1})}\frac{x}{\exp((1-\beta)\log x +\log|\rho|)}
\\& \ll_{A,\kappa,\theta,a,\vp} x^{\alpha+\vp} + \sum_{n=1}^{\infty} N(\alpha;e^{n+1})
x\exp(-(\inf_{\rho\in \Z(\alpha; e^n,e^{n+1})} (1-\beta)\log x +\log|\rho|)).
\end{align*}
Here the first term gives only an $O(x^{1-\vp})$ error term. For the sum over zeroes close to $\Re s=1$, we appeal to the Carlson-type density estimate Theorem \ref{th:NewDensity} with our fixed $\vp>0$. This leads to $N(\alpha;e^{n+1}) \ll_{A,\kappa,\theta,\vp} \exp((\frac{12}{1-\theta}(1-\alpha)+\vp)(n+1))$. Also, for the inf inside the sum first we can separate a part and then the inf can be extended to all zeroes:
\begin{align}\label{eq:Knvp}
\inf_{\rho\in \Z(\alpha,e^n,e^{n+1})} &(1-\beta)\log x +\log\gamma \notag
\\& \ge  K\vp \inf_{\rho\in \Z(\alpha,e^n,e^{n+1})} \log \gamma ~+~ (1-K\vp) \inf_{\rho \in \Z(\alpha,e^n,e^{n+1})} (1-\beta)\log x +\log\gamma \notag
\\& \ge K\vp n + (1-K\vp)\ot(x).
\end{align}
In all we find with choosing $K:= \frac{48}{1-\theta} +2$ the estimate
\begin{align*}
Z_a(x) & \ll_{A,\kappa,\theta,a,\vp}  x^{1-\vp} + \sum_{n=1}^\infty N(\alpha,e^{n+1}) x \exp(-K\vp n -(1-K\vp)\ot(x))
\\ & \ll_{A,\kappa,\theta,a,\vp} x^{1-\vp} +  \sum_{n=1}^\infty  \exp\left((\frac{12}{1-\theta}\,4 \vp+\vp)(n+1)-K\vp n \right) x \exp(-(1-\ve)\ot(x))
\\ & \le x^{1-\vp} + \sum_{n=1}^\infty  e^{K\vp} \exp\left(-\vp n \right) x \exp(-(1-\ve)\ot(x))
\\ & \ll_{A,\kappa,\theta,a,\vp} x^{1-\vp} + x \exp(-(1-\ve)\ot(x)) \ll x\exp(-(1-\ve)\ot(x)) \qquad(x>x_0(\G,\ve)).
\end{align*}
\end{proof}

\begin{theorem}\label{th:upperestimation} If $\theta_0=1$, then we still have for any $\ve>0$ and sufficiently large $x>x_0(\ve,\G)$ the inequalities
\begin{equation}\label{eq:uppertheorem}
D(x) \le S(x) \le (1+o{_\G}(1)) Z_\alpha(x) \ll_{\ve,\G} x\exp(-(1-\ve)\om(x)).
\end{equation}
\end{theorem}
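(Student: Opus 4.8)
The plan is to assemble the statement from pieces already in hand: the reduction \eqref{eq:DSZtheta0}, the sum estimate of Proposition~\ref{p:Zomega}, and the observation that in the case $\theta_0=1$ both $\om$ and $\om_\alpha$ are of lower order than $\log x$, so that every error term of shape $x^\alpha\log^3 x$ with a fixed $\alpha<1$ is absorbed by the main term. \textbf{Step 1 (reduction).} Fix any $\alpha$ with $\theta<\alpha<1$, say $\alpha=(1+\theta)/2$. By \eqref{eq:DSZtheta0} (legitimate since $\theta_0=1$) we have $D(x)\le S(x)\le Z_\alpha(x+5)+O_\G\bigl(x^\alpha\log^3 x\bigr)$. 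First I would pass from $Z_\alpha(x+5)$ to $(1+o_\G(1))Z_\alpha(x)$: since $(x+5)^\beta\le(1+5/x)\,x^\beta$ uniformly for $0\le\beta\le 1$, the portion of $Z_\alpha(x+5)$ coming from zeroes with $|\gamma|<x$ is at most $(1+5/x)Z_\alpha(x)$, while the zeroes with $x\le|\gamma|<x+5$ number only $O_\G(\log x)$ by Lemma~\ref{l:zeroesinrange} and each contributes at most $2$, adding a further $O_\G(\log x)$. Hence $S(x)\le(1+5/x)Z_\alpha(x)+O_\G\bigl(x^\alpha\log^3 x\bigr)$.

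\textbf{Step 2 (errors are negligible).} Since $\theta_0=1=\sup\{\Re\rho:\zeta(\rho)=0\}$, for every $\delta>0$ there is a zero $\rho$ with $\beta>1-\delta$, whence $Z_\alpha(x)\ge x^{\beta}/|\rho|\gg_\delta x^{1-\delta}$ for large $x$; choosing $\delta<1-\alpha$ gives $x^\alpha\log^3 x=o(Z_\alpha(x))$ and $\log x=o(Z_\alpha(x))$. (Equivalently, $Z_\alpha(x)\ge W_\alpha(x)=x\exp(-\om_\alpha(x))$ with $\om_\alpha(x)=o(\log x)$ by Lemma~\ref{l:omega-a}.) Feeding this into Step~1 yields $D(x)\le S(x)\le(1+o_\G(1))Z_\alpha(x)$.

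\textbf{Step 3 ($Z_\alpha$ and the passage $\ot\to\om$).} Proposition~\ref{p:Zomega} applied with $a=\alpha$ gives, for every $\ve>0$ and $x>x_0(\G,\ve)$, the bound $Z_\alpha(x)\ll_{\ve,\G}x\exp(-(1-\ve)\ot(x))$. It remains only to replace $\ot$ by $\om$. For any zero $\rho=\beta+i\gamma$ with $\gamma>1$ (there are infinitely many, since $\theta_0=1$) one has $|\rho|^2=\beta^2+\gamma^2<2\gamma^2$, hence $\log|\rho|<\log\gamma+\tfrac12\log 2$, so $(1-\beta)\log x+\log|\rho|<\bigl((1-\beta)\log x+\log\gamma\bigr)+\tfrac12\log 2$; taking the infimum over this subfamily of the zeroes defining $\om(x)$ yields $\om(x)\le\ot(x)+\tfrac12\log 2$, and therefore $\exp(-(1-\ve)\ot(x))\le\sqrt{2}\,\exp(-(1-\ve)\om(x))$. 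Chaining the displays, $D(x)\le S(x)\le(1+o_\G(1))Z_\alpha(x)\ll_{\ve,\G}x\exp(-(1-\ve)\om(x))$, as claimed.

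\textbf{Where the difficulty lies.} There is no essentially new obstacle: the substance sits in Proposition~\ref{p:Zomega}, which in turn rests on the Carlson-type density estimate Theorem~\ref{th:NewDensity} and the slow variation of $\ot$ (Lemma~\ref{l:omegatilda}); the present argument is bookkeeping. The one point that genuinely requires care is that when $\theta_0=1$ the room above the main term is only a factor $x^{o(1)}$ — unlike the case $\theta_0<1$, where it is a genuine power of $x$ — so one must actually use $\om(x)=o(\log x)$ (equivalently $\om_\alpha(x)=o(\log x)$), which is forced by the classical zero-free region pushing the zeroes accumulating to $\Re s=1$ up to large $|\gamma|$, to certify that the additive errors $x^\alpha\log^3 x$ and $O_\G(\log x)$ are of strictly lower order than $Z_\alpha(x)$.
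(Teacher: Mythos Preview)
Your proof is correct and follows exactly the route the paper intends: the paper states Theorem~\ref{th:upperestimation} immediately after Proposition~\ref{p:Zomega} without an explicit proof, relying on the reader to combine \eqref{eq:DSZtheta0}, Proposition~\ref{p:Zomega}, and the asymptotic equivalence of $\om$ and $\ot$ when $\theta_0=1$ (cf.\ the footnote in Section~\ref{sec:upperesti}). You have supplied precisely these details --- the passage $Z_\alpha(x+5)\to(1+o(1))Z_\alpha(x)$, the absorption of the $x^\alpha\log^3 x$ error via $Z_\alpha(x)\gg x^{1-\delta}$, and the bounded gap $\om(x)\le\ot(x)+\tfrac12\log 2$ --- all correctly.
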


\begin{corollary}\label{cor:ometaomupper} If $\eta:[0,\infty)\to [0,1/2]$ is a function such that the domain \eqref{eq:etazerofree} is free of zeroes of the Beurling $\zeta$ function, then we have $D(x) \le S(x) \ll x\exp(-(1-\ve)\om_\eta(x))$.
\end{corollary}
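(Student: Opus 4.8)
\emph{Proof plan.} The corollary is the $\eta$-packaged form of the unconditional bounds on $S(x)$ proved in this section, and the plan is to deduce it from them through the single elementary inequality $\oet(x)\le\om(x)$, valid whenever $\DD(\eta)$ contains no $\zeta$-zero. Concretely I would: first establish $\oet\le\om$ by confronting the defining infima \eqref{eq:omegaetadef} and \eqref{eq:Womegadef}; then split according to the value of $\theta_0$ and invoke Proposition~\ref{p:thetathetanull}, Corollary~\ref{cor:thetanulllessone} or Theorem~\ref{th:upperestimation}; and finally combine using the monotone decrease of $u\mapsto e^{-(1-\ve)u}$.

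For the first step, let $\rho=\beta+i\gamma$ be a $\zeta$-zero with $\beta>\theta$, and by conjugate symmetry assume $\gamma>0$. Since $\rho\notin\DD(\eta)$ one has $1-\beta\ge\eta(\gamma)$, so, using $|\rho|\ge\gamma$, the contribution $(1-\beta)\log x+\log|\rho|$ of $\rho$ to $\om(x)$ is at least $\eta(\gamma)\log x+\log\gamma$ as soon as $\gamma\ge1$, and the latter is one of the numbers over which the minimum defining $\oet(x)$ is taken, hence $\ge\oet(x)$. Taking the infimum over all zeroes with $\gamma\ge1$ already gives a bound $\ge\oet(x)$; the finitely many zeroes of $\Z(\theta,1)$ have fixed contributions while $\oet(x)\le\eta(1)\log x$ grows, so for large $x$ they too exceed $\oet(x)$, yielding $\om(x)\ge\oet(x)$. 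The cleanest way to make this step immune to the technicalities of low-lying zeroes and of the restriction $t\ge1$ (cf. Remark~\ref{rem:zero-one}) is to run the comparison through the already regularised functions $\ot$, $\om_a$ of Lemmas~\ref{l:omegatilda} and~\ref{l:omega-a} and the log-convex envelopes of Lemma~\ref{l:omegaeta}.

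For the remaining steps I would distinguish the three cases of Section~\ref{sec:upper}. If $\theta_0=1$, Theorem~\ref{th:upperestimation} gives $D(x)\le S(x)\ll_{\ve,\G}x\exp(-(1-\ve)\om(x))$; if $\theta<\theta_0<1$, the same is Corollary~\ref{cor:thetanulllessone}; in both cases $\om(x)\ge\oet(x)$ and monotonicity of $e^{-(1-\ve)u}$ turn this into $S(x)\ll x\exp(-(1-\ve)\oet(x))$, as required. If $\theta_0=\theta$ there are no $\zeta$-zeroes with $\Re s>\theta$, so $\om(x)\equiv+\infty$ and the above is vacuous; here Proposition~\ref{p:thetathetanull} yields $S(x)\ll x^\theta\log^6(x+1)$, and I would close the gap directly: from $\oet(x)\le\eta(1)\log x$ one gets $x\exp(-(1-\ve)\oet(x))\ge x^{\,1-(1-\ve)\eta(1)}$, and $x^\theta\log^6(x+1)\ll x^{\,1-(1-\ve)\eta(1)}$ because $1-(1-\ve)\eta(1)>\theta$ in the normalisation used in Theorem~\ref{th:Beurlingdomainesti}.

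I expect the only delicate part to be the comparison $\oet\le\om$ in degenerate configurations -- low-lying zeroes, $\om$ possibly equal to $-\infty$, or a large $\theta$ pressing $\DD(\eta)$ against the boundary $\Re s=\theta$ of the half-plane of meromorphy. This is bookkeeping rather than a true obstacle: passing to $\om_a$, $\ot$ and to log-convex envelopes handles all of it, and everything else is a direct citation of the results of Section~\ref{sec:upper}.
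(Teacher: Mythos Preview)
Your proposal is correct and follows essentially the same approach as the paper: establish $\oet(x)\le\om(x)$ from the zero-free hypothesis by comparing the defining infima, then invoke Proposition~\ref{p:thetathetanull}, Corollary~\ref{cor:thetanulllessone}, or Theorem~\ref{th:upperestimation} according to whether $\theta_0=\theta$, $\theta<\theta_0<1$, or $\theta_0=1$. You are in fact more careful than the paper about the bookkeeping issues (low-lying zeroes, the $\theta_0=\theta$ case where $\om\equiv+\infty$ and the comparison must be closed via $\oet(x)\le\eta(1)\log x$), which the paper's two-line proof leaves implicit.
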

\begin{proof} Observe that for any fixed particular root $\rho$ of $\zeta$ we have $\om(\rho,x)=(1-\beta)\log x+ \log \gamma \ge \eta(\gamma) \log x + \log \gamma \ge \om_\eta(x)$ as the latter is a minimum of such expressions with $\gamma$ allowed to run over all values of $t>1$, irrespective of being a concrete ordinate of a zero $\rho$ or not. Therefore, the same holds also for the minimum taken over all zeroes, i.e. $\om(x)\ge \om_\eta(x)$. Consequently, the assertion follows from Theorem \ref{th:upperestimation} if $\theta_0=1$ and from Proposition \ref{p:thetathetanull} and Corollary \ref{cor:thetanulllessone} for $\theta_0=\theta$ or $\theta<\theta_0<1$, respectively.
\end{proof}

Here it becomes clear why the Ingham type "direct" theorem of Pintz formulated with the function $\om$ is stronger than any result with a zero-free domain defined by some boundary function $\eta$.

Also note that Corollary \ref{cor:ometaomupper} implies the above stated Theorem \ref{th:Beurlingdomainesti} \emph{with arbitrary boundary functions} $\eta$, assuming neither continuity nor monotonicity.

This is in particular useful if we consider that there were studies related to such boundary functions. Namely, in the classical case it was studied in detail, see e.g. \cite{Trud, Trud2}, what consequences can be drawn from knowing RH to hold \emph{up to a certain given height} (which is already a known fact for quite high ordinates in some cases, most notably for the Riemann zeta function \cite{Trud3}). Now, such a condition could be described by setting $\eta(t):=1/2$ for $0\le t \le T$ and then just putting $\eta(t)=1$, or perhaps $\eta(t)=c/\log t$ or anything known for a zero-free domain: in Corollary \ref{cor:ometaomupper} we do not need continuity or monotonicity.

Also we can discuss here what the result of Theorem \ref{th:upperestimation} means. It basically says that from the series in the Riemann-von Mangoldt formula \eqref{eq:Riemann-vonMangoldt}, however divergent, we can estimate the order of magnitude of $\D(x)$. It is clear in this direction that the total sum of absolute values of terms i.e. $Z(x)$ should bound $\D(x)$--and whence also $D(x)$ and $S(x)$--from above, but it is less immediate that this upper bound is still below (almost) $x\exp(-\om(x))$. The meaning of this last expression is, however, clear: that stands for the largest term of the Riemann-von Mangoldt sum. Therefore, the results show that in fact this Riemann-von Mangoldt sum--in spite of divergence!--behave quite regularly in the sense that the order of the total sum is about the magnitude of the largest term of the series. That is a well-known behavior in e.g. entire functions--but it holds also for this divergent series, which might be somewhat surprising. The analysis of the "converse Ingham type theorems" to follow below will show, however, that the strong connection between largest term and total sum will prevail also in lower estimations, that is $\D(x)$ will be shown to oscillate frequently about as large as the total sum (which is already known to be bounded by about the largest term of the series).

Obviously $\D(x)$ cannot \emph{always} be as large, given that we expect it changing signs often, therefore being small around such sign changes; yet the finding that \emph{frequently it reaches about} $x\exp(-\om(x))$, while the total absolute value sum $Z_a(x)$ of the Riemann-von Mangoldt series is \emph{always at most that magnitude}, is remarkable.

Recall that all these were explored by Pintz \cite{PintzProcStekl} in the classical case and here we only succeeded to extend some of his results to the Beurling case.

\section{Oscillation of $\Delta_{\PP}(x)$ in case $\theta_0<1$}\label{sec:oscillationthetanull}

Having the study of $O$-results, i.e., upper estimations concluded, in the rest of the paper we study oscillation estimates, i.e., $\Omega$-type results. As is well understood in the classical case and is extending with ease to the Beurling setup, the case when $\theta_0<1$ is much simpler, being essentially a century-old result of Phragmen. For clarification of the full picture, we recall here a version of the well-known argument.

Let us start with observing that the average
$$
A(x):=\frac{1}{x} \int_1^x \D(u) du
$$
is at most $D(x)$, which is estimated from above by $S(x)$. Further, if $\D(x) =O(x^a)$ with a certain exponent $\theta \le a \le 1$, then obviously also $A(x) = O(x^a)$.

The following assertion essentially settles the case of $\theta <\theta_0<1$.

\begin{proposition}\label{p:thetanulllessthanone} If $\theta<\theta_0 \le 1$, then for any $\ve>0$ we necessarily have $A(x)=\Omega(x^{\theta_0-\ve})$.
\end{proposition}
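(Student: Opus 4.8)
The plan is to run the classical Phragm\'en--Landau argument in Mellin-transform form: a zero of $\zeta$ whose real part is close to $\theta_0$ produces a genuine singularity of a transform attached to $A(x)$, while an upper bound for $A(x)$ would push the domain of analyticity of that transform too far to the right, a contradiction. First I would pass to the primitive $B(x):=\int_1^x\D(u)\,du$, so that $A(x)=B(x)/x$ and the claim $A(x)=\Omega(x^{\theta_0-\ve})$ is equivalent to $B(x)=\Omega(x^{1+\theta_0-\ve})$. Starting from the standard identity $-\frac{\zeta'}{\zeta}(s)=s\int_1^\infty\psi(x)x^{-s-1}\,dx$ (valid for $\Re s>1$ by partial integration, since $\psi(x)\ll x$ and $\psi(1-0)=0$) and writing $\psi(x)=x+\D(x)$, one gets for $\Re s>1$
\begin{equation*}
F(s):=\int_1^\infty\D(x)x^{-s-1}\,dx=-\frac1s\frac{\zeta'}{\zeta}(s)-\frac1{s-1}.
\end{equation*}
By Axiom A the right side continues meromorphically to $\Re s>\theta$; since the simple pole of $\zeta$ at $1$ makes $-\frac1s\frac{\zeta'}{\zeta}$ have residue $1$ there, cancelled by $-\frac1{s-1}$, the only singularities of $F$ in $\Re s>\theta$ are genuine simple poles at the zeros $\rho$ of $\zeta$ with $\Re\rho>\theta$ (with nonzero residue).

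One more partial integration (the boundary terms vanish, as $B(1)=0$ and $B(x)\ll x^2$) gives, for $\Re s$ large,
\begin{equation*}
G(s):=\int_1^\infty B(x)x^{-s-1}\,dx=\frac1s\int_1^\infty\D(x)x^{-s}\,dx=\frac1s\,F(s-1),
\end{equation*}
so $G$ continues meromorphically to $\Re s>1+\theta$ with genuine simple poles exactly at the translated zeros $s=1+\rho$: at such a point the factor $\frac1s$ and the factor $\frac1{s-1}$ sitting inside $F(s-1)$ are nonzero because $\Re(1+\rho)>1$, and the remaining term $-\frac1{s-2}$ of $F(s-1)$ is regular there because $\rho\ne1$. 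Now I would argue by contradiction; it suffices to treat $0<\ve<\theta_0-\theta$, as for larger $\ve$ the statement is weaker. Suppose $A(x)\ne\Omega(x^{\theta_0-\ve})$, i.e.\ $A(x)=o(x^{\theta_0-\ve})$, hence $B(x)=o(x^{1+\theta_0-\ve})$. Then $\int_1^\infty B(x)x^{-s-1}\,dx$ converges absolutely and is analytic on $\Re s>1+\theta_0-\ve$; since $1+\theta_0-\ve<2$, this half-plane is connected and contains the region where that integral already equals $G(s)$, so by the identity theorem $G$ is analytic throughout $\Re s>1+\theta_0-\ve$. But $\theta_0>\theta$ forces $\theta_0=\sup\{\Re\rho:\zeta(\rho)=0\}$, so there is a zero $\rho_1=\beta_1+i\gamma_1$ with $\beta_1>\theta_0-\ve$, and then $G$ has a pole at $s=1+\rho_1$ with $\Re(1+\rho_1)>1+\theta_0-\ve$ --- a contradiction. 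This yields $A(x)=\Omega(x^{\theta_0-\ve})$ for all $\ve>0$.

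I do not expect a serious obstacle here: the argument uses only the trivial half of Landau's lemma (an absolutely convergent Mellin integral defines an analytic function), and no sign hypothesis on $\D$ or $B$ is needed. The only points requiring a little care are checking that the poles of $G$ at the translated zeros $1+\rho$ genuinely survive (none of the elementary factors $\tfrac1s,\tfrac1{s-1},\tfrac1{s-2}$ kills or cancels them), and noting that $\theta_0>\theta$ is precisely what supplies a zero with real part exceeding $\theta_0-\ve$; when $\theta_0=\theta$ there is no such zero and the conclusion indeed fails, in harmony with Proposition~\ref{p:thetathetanull}.
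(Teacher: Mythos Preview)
Your proof is correct and follows essentially the same Phragm\'en--Landau contradiction argument as the paper. The only difference is cosmetic: the paper works directly with the Stieltjes Mellin transform $\mathcal{M}(A)(s)=\int_1^\infty x^{-s}\,dA(x)=\frac{-1}{s+1}\bigl(\frac{\zeta'}{\zeta}(s)+\frac{s}{s-1}\bigr)$, whose poles sit at the unshifted zeros $\rho$, whereas you pass to $B(x)=xA(x)$ and use the ordinary integral $G(s)=\int_1^\infty B(x)x^{-s-1}\,dx=\frac{1}{s}F(s-1)$, whose poles sit at the shifted points $1+\rho$; the two are related by a trivial change of variable and carry the same information.
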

\begin{proof} Assume the contrary, that is, assume $A(x)=O(x^{\theta_0-\ve})$. Let us consider the Dirichlet-Mellin transform ${\mathcal M}(A)(s):=\int_1^\infty x^{-s} dA(x)$ of $A(x)$, which--under our indirect assumption--converges uniformly and hence is \emph{necessarily analytic} for $\Re s>\theta_0-\ve$. According to the upper part of the Chebyshev bounds (following from the PNT and hence in turn from Beurling's original result in case of Axiom A holding), we always have\footnote{Actually, we have proved much better in Corollary \ref{cor:Deltatheta}, which allows to treat these integral reformulations everywhere in $\Re s>\theta_0$. However, referring to the Chebyshev bound only is self-contained, not relying on previous calculations and the Carlson-type density theorem in particular. Using the analytic continuation is needed anyway, either to $\Re s \in (\theta_0-\ve,1]$ or only to $(\theta_0-\ve,\theta_0]$, which is all the same.} $\D(x)=O(x)$. Therefore, the following reformulations are valid at least for $\Re s >1$, treating only locally uniformly convergent integrals:
\begin{align*}
{\mathcal M}(A)(s) &:= \int_1^\infty x^{-s} dA(x) = - \int_1^\infty A(x) dx^{-s}= -\int_1^\infty \left( \frac{1}{x} \int_1^x \D(u) du \right) dx^{-s}
\\ & = -\int_1^\infty \int_1^\infty \D(u) \int_u^\infty \frac{1}{x} dx^{-s}du = \int_1^\infty \int_1^\infty \D(u) \frac{s}{s+1} u^{-s-1}du
\\& = \frac{-1}{s+1} \int_1^\infty \D(u) d u^{-s} = \frac{-1}{s+1} \left(1-\int_1^\infty u^{-s}d\D(u) \right)= \frac{1}{s+1}\bigg(-1+{\mathcal M} (\D)(s)\bigg) .
\end{align*}
Here the Dirichlet-Mellin transform of $\D(x)$ can be calculated directly using \eqref{zetalogder}, \eqref{psidef} and \eqref{Deltadef}:
\begin{equation}\label{Hdef}
H(s):={\mathcal M}(\D)(s) := \int_1^\infty x^{-s} d\Delta(x) = \int_{1}^{\infty} x^{-s} d\psi(x) - \int_{1}^{\infty} x^{-s}dx = -\frac{\zeta'}{\zeta}(s)- \frac{1}{s-1}.
\end{equation}
Thus ${\mathcal M}(A)(s) = \frac{-1}{s+1}  \left(\frac{\zeta'}{\zeta}(s)+\frac{s}{s-1}\right)$, which is indeed meromorphic, \emph{but not analytic} in the whole halfplane $\Re s>\theta_0-\ve$, due to the poles arising from zeroes lying arbitrarily close to the $\Re s=\theta_0$ line.

The obtained contradiction proves the assertion.
\end{proof}

\begin{corollary}\label{cor:thetanullosci} If $\theta<\theta_0<1$, then we have $\D(x), D(x), S(x) =\Omega(x\exp(-(1+\ve)\omega(x))$.
\end{corollary}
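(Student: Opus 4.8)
The plan is to derive this corollary directly from Proposition \ref{p:thetanulllessthanone} by the elementary observation that, when $\theta_0<1$, the quantity $x\exp(-(1+\ve)\om(x))$ is dominated by a fixed power $x^{\theta_0-\delta}$ with $\delta=\delta(\ve)>0$. I would first note that it suffices to treat arbitrarily small $\ve$ (larger $\ve$ only weakens the claim), and that one may assume $q_0:=\inf\{|\rho|:\zeta(\rho)=0\}>0$, since otherwise $\om\equiv-\infty$ by Lemma \ref{l:omega} and the right-hand side of the asserted $\Omega$-estimate is $+\infty$, leaving nothing to prove. Under this assumption, reading off from \eqref{eq:Womegadef} the bound $\om(x)\ge(1-\theta_0)\log x+\log q_0$ (valid because $1-\beta\ge 1-\theta_0$ for every $\zeta$-zero and $\log x\ge 0$ for $x\ge1$), one obtains $x\exp(-(1+\ve)\om(x))\le q_0^{-(1+\ve)}x^{\theta_0-\ve(1-\theta_0)}\le x^{\theta_0-\delta}$ for all $x\ge x_1(\ve)$, with $\delta:=\ve(1-\theta_0)/2>0$.

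Then Proposition \ref{p:thetanulllessthanone}, applied with $\delta$ in the role of its $\ve$, gives $A(x)=\Omega(x^{\theta_0-\delta})$, hence a fortiori $A(x)=\Omega(x\exp(-(1+\ve)\om(x)))$ by the previous step. Since $|A(x)|\le\frac{1}{x}\int_1^x|\D(u)|\,du=D(x)\le S(x)$, the asserted oscillation bounds for $D$ and $S$ follow immediately.

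The only point requiring slightly more than bookkeeping is the statement for $\D$ itself, since "$\D$ is large at some point" does not follow formally from "its average $D$ is large". I would obtain it by repeating the Mellin-transform argument of Proposition \ref{p:thetanulllessthanone} with $\D$ directly in place of $A$: by \eqref{Hdef} one has ${\mathcal M}(\D)(s)=-\zeta'/\zeta(s)-1/(s-1)$, so if $\D(x)=O(x^{\theta_0-\delta})$ then, using also the Chebyshev bound $\D(x)=O(x)$ and a single partial integration to secure convergence, this function would be analytic on $\Re s>\theta_0-\delta$; this contradicts the fact that, since $\theta_0>\theta$ is the supremum of the real parts of the $\zeta$-zeroes, there are zeroes with real part exceeding $\theta_0-\delta$, each of them a pole of $-\zeta'/\zeta$. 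Hence $\D(x)=\Omega(x^{\theta_0-\delta})=\Omega(x\exp(-(1+\ve)\om(x)))$. A self-contained alternative avoiding the direct Mellin computation would assume $\D(x)=o(g_\ve(x))$ with $g_\ve(x):=x\exp(-(1+\ve)\om(x))$, use Lemma \ref{l:omega}(1) to get $g_\ve(u)\le g_\ve(X)(u/X)^{c}$ on $[\sqrt X,X]$ for a suitable constant $c>0$, combine this with the Chebyshev bound on $[1,\sqrt X]$, and thereby contradict the already established $D(X)=\Omega(g_\ve(X))$. I do not foresee any real obstacle here: the corollary is essentially a restatement of Proposition \ref{p:thetanulllessthanone}, and the only mildly delicate ingredient is passing from the average $D$ to $\D$, which the direct Mellin argument settles cleanly.
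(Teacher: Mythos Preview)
Your proof is correct and follows essentially the same route as the paper: use the lower bound $\om(x)\ge(1-\theta_0)\log x+\log q_0$ (the paper simply cites $\om(x)\sim(1-\theta_0)\log x$ from Lemma~\ref{l:omega}) to dominate $x\exp(-(1+\ve)\om(x))$ by a power $x^{\theta_0-\delta}$, then invoke Proposition~\ref{p:thetanulllessthanone}. Your treatment of the $\Delta$ case is more elaborate than necessary: the paper relies on the remark stated just before the Proposition that $\Delta(x)=O(x^a)\Rightarrow A(x)=O(x^a)$, whose contrapositive (with $o$ in place of $O$) yields $\Delta=\Omega(x^{\theta_0-\delta})$ directly from $A=\Omega(x^{\theta_0-\delta})$; your separate Mellin argument is valid but not needed.
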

\begin{proof} Note that $\omega(x)\sim (1-\theta_0) \log x$, which is a positive constant multiple of $\log x$ in our case. By the previous Proposition \ref{p:thetanulllessthanone} the assertion follows.
\end{proof}

\begin{remark} The above result is ineffective. To get an effective version is certainly possible by applying e.g. a simplified form of the below machinery, worked out in the next three sections for the $\theta_0=1$ case. We leave the details to the reader.
\end{remark}

\begin{remark}\label{rem:nozero} The case when $\theta_0=\theta$, remains somewhat unclarified. This case means that there are no zeroes in $\Re s>\theta$, that is, there are no zeroes at all in the halfplane of analyticity (meromorphicity) guaranteed by Axiom A--and hence by definition $\theta_0=\max(\theta,\sup_\emptyset\beta)=\max(\theta,-\infty)=\theta$.

In this case various scenarios are indeed possible, hence it is no wonder that we cannot prove an $\Omega(x\exp(-(1+\ve)\om(x))$ result. E.g. there is a construction \cite{BrouckeDebruyneVindas} of a Beurling system with very well-behaved primes satisfying $\D(x)=O(x^{1/2+\ve})$, but the integers satisfying Axiom A with no $\theta<1$. Analogously, a similar construction with any "best value" of $\theta$ in $(1/2,1)$ seems likely to exist. We also consider that with some refinement of the argument in \cite{Rev-One} (see in particular Remark 1 there), even $\D(x)=O(x^\alpha)$ is possible with any "best value\footnote{Formally, this "best value" is $\limsup_{x\to \infty} \frac{\log(\N(x)-x)}{\log x}$.}" of $\theta \in [1/2,1]$ in Axiom A. That, on the other hand, a situation with $\theta_0=\theta<1/2$ cannot occur, follows from a nice result of Hilberdink \cite{H-10}, taking into account the above Theorem \ref{th:thetalessoneupper}, say.

Describing the above mentioned constructions would lead us aside in the direction of studying so-called "$\alpha$-$\beta$-systems" of Hilberdink, and is therefore left to a forthcoming study.
\end{remark}

\section{The weighted average of $\Delta_{\PP}(x)$ and its upper estimate}\label{sec:upperesti}

As told, proving an oscillatory result of the magnitude $x\exp(-(1+\ve)\omega(x))$ is much harder when $\theta_0=1$, and $\omega(x)=o(\log x)$. The other cases being already discussed, from now on we consider only the case $\theta_0=1$. To prove our oscillatory results in this case will comprise three sections, the present one being devoted to a more or less direct upper estimation of a weighted and averaged version of $\Delta(x)$. The next section will apply a different (complex) calculus to demonstrate also a lower bound, and by a comparison of the two sided estimates and with suitable choices of our parameters we will conclude the argument in Section \ref{sec:parametersandproof}. So, these sections are parts of the same argument and our notations, choices, conditions and constructions remain valid throughout.

In this argument we will assume that $x$ is large enough, and that $\rho_0=\beta_0+i\gamma_0$ is a $\zeta$-zero, which is close to optimal for $\omega(x)$: more precisely\footnote{Recall that here we assume that $\theta_0=1$, whence we have that it is not attained. Therefore, when $x\to \infty$, also for the corresponding extremal zero we have $\rho \to 1+i\infty$. Therefore, from here on we do not distinguish between writing $\om(x)$ with $\log|\rho|$ or $\ot(x)$ with $\log \ga$. The difference is only $\log(|\rho|/\ga)=\frac12 \log(1+\beta^2/\ga^2)< \frac12 \ga^{-2}$, a negligible term in all estimates.} $\omega(\rho_0;x):=(1-\beta_0)\log x + \log \gamma_0 < \omega(x)+1$, an unimportant constant deficiency. This flexibility in possibly preferring some close-by zero instead of the actual extremal one we need here for ensuring another, somewhat more convenient property: we assume that $(\beta_0,1]\times i[\gamma_0-3,\gamma_0+3]$ is zero-free, so that $\rho_0$ is a kind of locally borderline zero. To see that our assumptions can be met, consider a truly extremal zero $\rho_1$. If $[\beta_1,1]\times i[\gamma_1-3,\gamma_1+3]$ is zero-free, then we are done: $\rho_0:=\rho_1$. If not, then we find another zero with $\beta_2>\beta_1$ and $\gamma_1<\gamma_2\le \gamma_1+3$. (Note that $\gamma_2<\gamma_1$ is impossible, for then $\om(\rho_2,x)<\om(\rho_1,x)$, contradicting to the definition of $\rho_1$. However, in the forthcoming later steps it can happen that the values of $\ga_j$ are not strictly increasing, even if $\gamma_j>\ga_0$, always.) We keep continuing the process: either the process ends before $k$ steps, or we have $\rho_k=\beta_k+i\gamma_k$, $\beta_k>\beta_{k-1}>\dots>\beta_1$, $\gamma_k<\gamma_1+3k$. However, the Carlson Density Theorem says that the number of zeroes $N(1-\ve,T)=o(T)$, whence for a small, fixed $\ve>0$, and large enough $x$ so that $\gamma_1>T_0(\ve)$, we obtain that the above process cannot produce more than $N(1-\ve,\gamma_1+3k)=o(\gamma_1+3k)$ zeroes. Therefore $k=o(\gamma_1+3k)$ and thus $k=o(\gamma_1)$ and $\log \gamma_1 < \log \gamma_k < \log (2\gamma_1)<\log\gamma_1+\log 2$, as needed. Then indeed $\om(\rho_k,x)=(1-\beta_k)\log x + \log \gamma_k < (1-\beta_1)\log x + \log \gamma_1 +1 =\om(x)+1$, so that the terminal element $\rho_k$ of the above construction is good for $\rho_0$ with the claimed properties.

In the following we will often write $\omega:=\om(\rho_0;x)$, and use without repeated explanations the fact that $0\le \om(x)-\om<1$, so that we can handle $\om$ as the extremal value.

We use the Dirichlet-Mellin transform \eqref{Hdef} of $\D(x)$, which is meromorphic in $\Re s>\theta$ and admits the further reformulations\footnote{As is mentioned above in Section \ref{sec:oscillationthetanull}, all these integral representations are in fact locally uniformly convergent even in $\Re s>\theta_0$ in view of Theorem \ref{th:thetalessoneupper}.}
\begin{equation}\label{eq:Rdef}
H(s):=\int_1^\infty x^{-s} d\Delta(x) = 1-\int_1^\infty \D(x)dx^{-s} = 1-s \int_{1}^{\infty} \D(x)x^{-s-1} dx.
\end{equation}
Further, we define with certain constant parameters $L, M$ to be specified later
\begin{align}\label{eq:U} \notag
U:=U(\rho_0)& := \frac{1}{2\pi i} \int_{2-i\infty}^{2+i\infty} H(s+\rho_0) e^{Ls^2+Ms}ds
 \\ &= \frac{1}{2\pi i} \int_{2-i\infty}^{2+i\infty} \left(1 -\int_1^{\infty}
\D(x) \frac{d}{dx} (x^{-s-\rho_0}) dx \right) e^{Ls^2+Ms} ds \notag
\\&= \frac{1}{2\sqrt{\pi L}}\exp(-M^2/4L) - \int_1^{\infty} \D(x) \frac{d}{dx} \left\{ x^{-\rho_0}
\frac{1}{2\pi i} \int_{(2)} e^{Ls^2+(M-\log x)s}ds\right\}dx
\\ &= \frac{e^{-M^2/4L}}{2\sqrt{\pi L}} - \int_1^{\infty} \D(x) \frac{d}{dx} \left\{ x^{-\rho_0}\frac{1}{2\sqrt{\pi L}} \exp \left( -\frac{(\log x -M)^2}{4L}
\right)\right\} dx \notag
\\ &= \frac{e^{-M^2/4L}}{2\sqrt{\pi L}}+\frac{1}{2\sqrt{\pi L}}
\int_1^{\infty} \frac{\D(x) }{x}  x^{-\rho_0} \left\{\frac{\log x
-M}{2L} +\rho_0\right\} \exp \left( -\frac{(\log x -M)^2}{4L} \right)
dx, \notag
\end{align}
where the integral formula of Lemma \ref{l::integralformula}
was applied twice, and the order of the integration and the derivation was
changed.

For the following we set a few parameters, whose values will be specified more precisely only later, but here we already tell about their approximate order of magnitude. Namely, we set $\ell \in (0,1)$ a fixed constant and $0<\vp<\ve$ a sufficiently small number, satisfying $\vp<\sqrt{(1-\theta)/300}$). Further, we introduce with these set quantities also the parameters $m, M$ and $L$ satisfying
\begin{equation}\label{eq:mMLdef}
\vp \omega  \le m \le \vp \log x, \qquad \log x -2m \le M \le \log x -m, \qquad L:=\ell M.
\end{equation}

Now let us split the integral for $U$ to four parts as
\begin{equation}\label{eq:Usplit}
U_0:=\int_1^{\sqrt{x}},~ U_1:=\int_{\sqrt{x}}^q,~U_2:=\int_q^x,~U_3:=\int_x^{\infty} \quad \textrm{with} \quad q:=\exp(M-m).
\end{equation}
Note that here the interval $[q,x]$ of integration for $U_2$ is a subset of $[x\exp(-3m),x]$.

For the general estimation of $\psi (x)$ and $\Delta(x)$, we have the obvious estimates
\begin{align}\label{eq:Psirough}
(0\le ) ~\psi(x) &\le \sum_{g\in \G, |g|\le x} \log x = \N(x) \log x \le (A+\kappa) x \log x \qquad &(x\ge 1), \notag
\\ |\Delta(x)|&\le A_0 x \log x +1 \qquad \textrm{where} \quad A_0:=\max(1,A+\kappa) \qquad &(x\ge 1).
\end{align}
This we will use for $U_0$ only, while for the most part we want to estimate more finely, so that we will apply the above Theorem \ref{th:upperestimation} with $\vp$ in place of $\ve$ and assuming $\sqrt{x}>x_0(\G,\vp)$.

For the interval $[x,\infty)$ we have $\om(u)>\om(x)$ by monotonicity, whence by Theorem \ref{th:Beurlingdomainesti}
$$
\frac{|\D(u)|}{u^{1+\beta_0}} \ll \frac{u\exp(-(1-\vp)\om(u))}{u^{1+\beta_0}} \le \exp(-(1-\vp)\om(x)) u^{1-\beta_0} \frac{1}{u} \qquad (u>x).
$$
Thus with the notation $Q:=\exp(M+m)<x$ the quantity $U_3$ can be estimated as follows.
\begin{align}\label{eq:U3b}
|U_3| & \ll \frac{\exp(-(1-\vp)\om(x))}{\sqrt{\pi L}} \int_x^{\infty} u^{1-\beta_0} \exp
\left( -\left(\frac{(\log u -M)}{2\sqrt{L}}\right)^2 \right) \left(\frac{\log u-M}{2L}+|\rho_0|\right)\frac{du}{u}\notag
\\ &\ll \frac{e^{-(1-\vp)\om}x^{1-\beta_0}|\rho_0|}{\sqrt{\pi L}} \int_{x}^{\infty} \exp \left( (\log u-\log x)(1-\beta_0) -\left(\frac{(\log u -M)}{2\sqrt{L}}\right)^2 \right) \left(\frac{\log u-M}{2L|\rho_0|}+1\right)\frac{du}{u}\notag
\\ &\ll\frac{e^{\vp\om}}{\sqrt{L}} \int_{Q}^{\infty} \exp \left( (\log u-M)(1-\beta_0) -\left(\frac{(\log u -M)}{2\sqrt{L}}\right)^2 \right) \left(\frac{\log u-M}{2L|\rho_0|}+1\right)\frac{du}{u}\notag
\\ &=2e^{\vp\om} \int_{\frac{m}{2\sqrt{L}}}^{\infty} \exp \left((1-\beta_0)2\sqrt{L}y -y^2 \right) \left(\frac{y}{\sqrt{L}|\rho_0|}+1\right)dy
\\ &\ll e^{\vp\om} \int_{\frac{m}{2\sqrt{L}}}^{\infty} \exp \left(\frac{\om}{\log x} 2\sqrt{L}y -y^2 \right) \left(2y-\frac{\om}{\log x} 2\sqrt{L}\right)dy \le e^{\vp\om+\om m/\log x -m^2/4L} .\notag
\end{align}
Here in the last but one step we wrote in the obvious $(1-\beta_0)\le \om/\log x$ and estimated $\frac{y}{\sqrt{L}|\rho_0|}+1 \le 2y-\frac{\om}{\log x} 2\sqrt{L}$, which certainly holds if  {$m>4\sqrt{L}$ and $m>8 \om L/\log x $} hold simultaneously, given that $y\ge \frac{m}{2\sqrt{L}}$ and $|\rho_0|>1/\sqrt{L}$.

\medskip
Let us see the analogous case of the estimation of $U_1$. Here we use the monotonicity of $\om(u)$, the inequalities $\omega(u)\le \om(\rho_0;u)\le \om(\rho_0;x)=\om \le \om(x)+1$ and also the end estimate \eqref{omegayz} from Lemma \ref{l:omega} in the form $\om(x)-\om(u)\le \frac{\om(u)}{\log u} (\log x - \log u)$. As now $\sqrt{x}\le u \le q$, we can write $\log x -M \le 2m = 2(M-\log q) \le 2 (M-\log u)$, whence also $\log x - \log u \le 3(M-\log u)$. It follows that $\frac{\om(u)}{\log u} (\log x - \log u) \le \frac{\om}{\frac12 \log x} (\log x -\log u) \le 6\frac{\om}{\log x} (M-\log u)$. Thus an application of Theorem \ref{th:upperestimation} (with $\vp$ and correspondingly sufficiently large $x$) yields
\begin{align*}
\frac{|\D(u)|}{u^{\beta_0}} |\rho_0| & \le \exp(-(1-\vp)\om(u)) u^{1-\beta_0} |\rho_0| =
e^{\vp\om(u)} \exp(\om(\rho_0,u)-\om(u))
\\ & \le e^{\vp\om} \exp(\om(\rho_0,x)-\om(u))\le e^{\vp\om} \exp(\om(x)+1-\om(u))
\\ & \le e^{1+\vp\om} \exp\left(\frac{\om(u)}{\log u}(\log x-\log u)\right)\le 3 e^{\vp\om} \exp\left(6 \frac{\om}{\log x}(M-\log u)\right).
\end{align*}

Using this and recalling $q=e^{M-m}$ we infer the estimate
\begin{align}\label{eq:U1}
|U_1|&\leq \frac{3e^{\vp\om}}{2\sqrt{\pi L}} \int_{\sqrt{x}}^{q} \exp \left( 6 \frac{\om}{\log x}(M-\log u)  -\frac{(\log u -M)^2}{4L} \right) \left\{\frac{M-\log u}{2L|\rho_0|} +1\right\}  \frac{du}{u} \notag
\\ & \ll e^{\vp\om} \int_{m/2\sqrt{L}}^{\infty} \exp \left(  6 \frac{\om}{\log x} 2\sqrt{L}y -y^2 \right) \left\{\frac{y}{\sqrt{L}|\rho_0|} +1\right\}  dy
\\ & \leq e^{\vp\om} \int_{m/2\sqrt{L}}^{\infty} \exp \left( 12\sqrt{L}\frac{\om}{\log x}y -y^2 \right) \left\{2y- 12\sqrt{L}\frac{\om}{\log x}\right\}  dy
\notag\\& =\exp(\vp\om+6\om m/\log x -m^2/4L). \notag
\end{align}

Here we have substituted $y:=(M-\log u)/2\sqrt{L}$ and applied the similar to the above {conditions $|\rho_0|\ge 2/\sqrt{L}$, $m>4\sqrt{L}$ and $m> 24L\om/\log x$} to get $\frac{y}{\sqrt{L}|\rho_0|} +1\le 2y-12\sqrt{L}\om/\log x$.

For small values of $u$ we do not have the asymptotic estimate of Theorem \ref{th:upperestimation}, so for the interval $[1,\sqrt{x}]$ we simply use \eqref{eq:Psirough} together with the  {assumptions $\log x \ge 5L\ge 20 \sqrt{L}$ (the last part coming from assuming $\ell<0.2$ and $L\ge 16$) and $M\ge 0.95 \log x$} (entailed by $\vp<0.025$, as $M\ge \log x -2m \ge (1-2\vp)\log x$). These provide
\begin{align}\label{eq:U0}
|U_0| &\ll \frac{1}{\sqrt{L}} \int_1^{\sqrt{x}} u^{1-\beta_0} (\log u+1) |\rho_0| \left\{\frac{M-\log u}{2L|\rho_0|} +1\right\}  \exp \left(-\frac{(M-\log u)^2}{4L} \right) \frac{du}{u} \notag
\\&\le x^{1-\beta_0} |\rho_0| ~ \frac{1}{\sqrt{L}} \int_1^{\sqrt{x}} 3(M-\log u) ~\frac{M-\log u}{L} ~ \exp \left(-\frac{(M-\log u)^2}{4L} \right) \frac{du}{u}
\\ & \ll e^\om \int_{(M-\log\sqrt{x})/2\sqrt{L}}^\infty y^2 e^{-y^2} dy \le
e^\om \frac{\log x}{\sqrt{L}} \exp\left(-0.225^2 \log^2 x/L\right) \ll e^{\om-0.04 \log^2x/L},\notag
\end{align}
applying in the last line the condition $M\ge 0.95 \log x$ and the estimate of Lemma \ref{l:estimates} (ii) with $\lambda=2$ and $B=(M-\log\sqrt{x})/2\sqrt{L} \ge 0.225 \log x/\sqrt{L} \ge 4$, this last estimate following from the above assumption $\log x \ge 20\sqrt{L}$, and entailing also
$$
20 \le \log x/\sqrt{L} \le 0.05 \log^2x/L \le \exp\left( \frac{\log 20}{20}(0.05\log^2x/L)\right)\le \exp\left(0.0075\log^2x/L\right).
$$
Noting that $M>(1-2\vp)\log x$ implies $e^{-M^2/4L}<e^{\om-0.04 \log^2x/L}$, collecting \eqref{eq:U}, \eqref{eq:Usplit}, \eqref{eq:U3b}, \eqref{eq:U1} and \eqref{eq:U0} we are led to
\begin{equation}\label{S-upperesti}
|U| \le |U_2|+O\left(e^{\om-0.04 \log^2x/L}+e^{\vp\om+6\om m/\log x -m^2/4L}\right).
\end{equation}
Regarding $U_2$,
we can write
\begin{align}\label{eq:U2upperDelta}
|U_2(\rho_0)| & \le \frac{1}{2\sqrt{\pi L}} \int_q^x \frac{|\D(u)|}{u^{1+\beta_0}} \left(|\rho_0|+ \frac{|\log u - M|}{2L} \right)  \exp \left(-\frac{(M-\log u)^2}{4L} \right) du \notag \\& \le \left(1+ \frac{2m}{2L|\rho_0|} \right) \frac{1}{2\sqrt{\pi L}} \int_{M-m}^{\log x} \frac{|\D(e^v)| |\rho_0|}{e^{\beta_0v}}  \exp \left(-\frac{(M-v)^2}{4L} \right)  dv
\\ &\ll  \max_{\xi=e^v \in [q,x]} \frac{|\D(\xi)|}{\xi^{\beta_0}/|\rho_0|} \frac{1}{2\sqrt{\pi L}} \int_{M-m}^{\log x} \exp \left(-\frac{(M-v)^2}{4L} \right)  dv \le \max_{\xi=e^v \in [q,x]} \frac{|\D(\xi)|}{\xi^{\beta_0}/|\rho_0|},\notag
\end{align}
under the  {additional assumption that $m\ll |\rho_0|L$}.

These upper estimates will be compared to the lower estimation of the next section.

\section{Lower estimate of $U$ by contour integration and power sum theory}\label{sec:loweresti}

In this second part we calculate $U$ by using the first form in \eqref{eq:U}. More precisely, we are to transfer the line of integration of $U=U(\rho_0)$ with $\rho_0=\beta_0+i\gamma_0$ from $(\sigma=2)$ to a new contour $\Gamma-\beta_0$, with $\Gamma$ lying in the strip $1-3\de\le \Re s \le 1-\de$, where $\de>0$ is a small parameter, to be chosen later in such a way that $\de<0.01(1-\theta)$, whence in particular $\frac{1+\theta}{2}<1-3\de$, too. Recall that $\rho_0$ is a close-to-extremal zero for a large value of $x$, and $\theta_0=1$ implies that $\beta_0$ has to be arbitrarily close to $1$, whence for $x$ large enough we certainly have $\beta_0>1-\de$.

The integrand, $H(s+\rho_0)\exp(Ls^2+Ms)$, is meromorphic between $\Re s=2$ and $\Gamma-\beta_0$, and if the new contour avoids all singularities of $H$, then the Residue Theorem can be applied. To ascertain that the contour avoids zeroes of $\zs$ (poles of $\frac{\zeta'}{\zeta}(s)$) we recall that once a Beurling system satisfies Axiom A with a certain value of $\theta<1$, then it satisfies the same also with any other value $\theta'\in (\theta,1)$. Therefore, to construct $\Gamma$ we will apply the construction of Lemma \ref{l:path-translates} but with the translation set $\AAA:=\{-i\ga_0,0,i\ga_0\}$, with the Axiom A related parameter value $\theta':=1-3\de$, and with the additional parameter $b:=1-\de$. This will furnish a contour which indeed lies in the prescribed strip $a:=1-2\de\le \Re s \le b:=1-\de$, avoids all singularities of $H$, and satisfies the estimates\footnote{In fact, the constants in the estimations of these Lemmas will not much depend on $\de$ because a moment's thought reveals that in the proof of the related estimates in \cite{Rev-MP} we can still appeal to the known estimates for the number of $\ze$-zeroes, available in the previous lemmas up to the whole strip $\theta<\Re s\le 1$; so we still have these Lemmas with constants depending on $\theta$ and $\kappa$ only. NOT QUITE TRUE...} of Lemmas \ref{l:path-translates} and \ref{l:zzpongamma-c}.

The transition of the contour of integration can be done easily due to the estimates of Lemma \ref{l:zzpongamma-c} and the uniform bound $|e^{Ls^2+Ms}|=O_{L,M}(e^{-Lt^2})$ holding uniformly in the strip $-1\le \sigma \le 2$ and $s=\sigma+it$. By an application of the Residue Theorem we thus find after the change of the integration path
\begin{align}\label{Uotherway}
U(\rho_0) 
= \frac{1}{2\pi i} \int_{\Gamma-\beta_0} H(s+\rho_0) e^{Ls^2+Ms}ds + \sum^{\star}_{\rho} \exp\left(L(\rho-\rho_0)^2+M(\rho-\rho_0)\right),
\end{align}
where the $\star$ indicates that exactly those zeroes of the Beurling zeta function are taken into account (and then according to multiplicity) for which $\rho-\rho_0$ lie to the right of the new contour $\Gamma-\beta_0$, that is, for which $\rho-i\gamma_0$ is to the right of $\Gamma-i\gamma_0$. Recall that the singularities of $H(s+\rho_0)$ are exactly at translates $\rho-\rho_0$ of zeroes $\rho$ of $\zeta$ with residues according to multiplicity; and that by construction all translated $\zeta$-zeroes $\rho-\rho_0$ avoid points $s-\beta_0=\sigma-\beta_0+it$ of the curve $\Gamma-\beta_0$ --that is, all vertically translated zeroes $\rho-i\gamma_0$ avoid the points $s=\sigma+it$ of the curve $\Gamma$--by at least $d:=d(t):=d(b,\theta,n,\gamma_0;t)=d(1-\de,\theta,3,\gamma_0;t)$ given in \eqref{ddist-corr}.

Here the integral--which we will denote by $S_0=S_0(\rho)$ henceforth--can be estimated by Lemma \ref{l:zzpongamma-c} as follows.
\begin{align}\label{Newcontourint}
|S_0(\rho_0)|&=\bigg| \frac{1}{2\pi i} \int_{\Gamma-\beta_0}  H(s+\rho_0) e^{Ls^2+Ms}ds \bigg|
\\ & \le \int_{\Gamma-\beta_0} \frac{A_3}{\de^3} \left(\log (|t|+\gamma_0+5) + \log\frac{1}{\de}\right)^2 \exp\left(L((\beta_0-a)^2-t^2) +M(b-\beta_0)\right) |ds| \notag
\\ & \le\frac{A_4}{\de^5} ~ e^{L(\beta_0-a)^2+M(b-\beta_0)} \int_{\Gamma-\beta_0} \log^2 (|t|+\gamma_0+5) \exp(-Lt^2) |ds| \notag
\\ &  \le A_4 ~ e^{4L\de^2+M(1-\de-\beta_0)} \int_{\Gamma-\beta_0} \log^2 (|t|+\gamma_0+5) \exp(-Lt^2) |ds|. \notag
\end{align}

By construction, the broken line $\Gamma$ consists of horizontal line segments $H_k$ of length $\le \frac12(b-\theta')=\de$ at height $t_k$, and vertical segments the horizontal projection of which covers the imaginary axis exactly (apart from endpoints). Therefore,
\begin{align*}
\int_{\Gamma-\beta_0} & \log^2 (|t|+\gamma_0+5) \exp(-Lt^2) |ds|
\\& \le 2 \int_{0}^\infty \log^2 (t+\gamma_0+5) e^{-Lt^2} dt \quad  + \de \sum_{k=1}^{\infty} \log^2 (t_k+\gamma_0+5) e^{-Lt_k^2}.
\end{align*}

Using the standard Vinogradov notation $\ll$ for explicit numerical constants only, for the integral here we easily see
$$
\int_{0}^\infty = \int_{0}^{\gamma_0+5}  +\int_{\gamma_0+5}^\infty \le \log^2(2\gamma_0+10)\int_0^\infty e^{-t^2}dt + \int_5^{\infty} \log^2(2t) e^{-t^2} dt \ll \log^2(\gamma_0+5).
$$
Recalling that by construction $t_1\ge 4$ and $t_k\ge t_{k-1}+1$, ($k\ge 2$), we get a similar estimate for the sum. Therefore,
$$
\int_{\Gamma-\beta_0} \log^2 (|t|+\gamma_0+5) \exp(-Lt^2) |ds| \ll \log^2(\gamma_0+5).
$$
Collecting the above estimates and taking into account $M<\log x$, the definition of $\om$ and that $\ga_0$ is large--needed here in the form that $\log^2(\ga_0+5)< \de^5 \ga_0$--we are led to
\begin{align}\label{Stranslatedintegral}
\bigg| \frac{1}{2\pi i} \int_{\Gamma-\beta_0}  H(s+\rho_0) e^{Ls^2+Ms}ds \bigg| & \le \frac{A_5}{\de^5} \log^2(\gamma_0+5) e^{4L\de^2-\de M + (1-\beta_0)\log x}
\\&\le A_5  e^{4L\de^2-\de M+(1-\beta_0)\log x + \log \ga_0} =A_5 e^{4L\de^2-\de M + \om}. \notag
\end{align}

Next we see to the estimations of the various parts of the right hand side sum of \eqref{Uotherway}.
Keeping the notation $a=\frac{b+\theta'}{2}=1-2\de$ used in the construction of $\Gamma$ let us write
\begin{align}\label{Zfarrootsum}
S_1(\rho_0)&:=\sum^{\star}_{\rho;~ |\Im \rho-\gamma_0|\ge 3} \exp\left(L(\rho-\rho_0)^2+M(\rho-\rho_0)\right)  \notag
\\ \notag & \le \sum_{n=3}^\infty e^{L(\beta_0-a)^2-Ln^2+M(1-\beta_0)} \left\{N(a,\gamma_0-n-1,\gamma_0-n)+ N(a,\gamma_0+n,\gamma_0+n+1)\right\}
\\ & \le e^{L(\beta_0-a)^2+M(1-\beta_0)} \sum_{n=3}^\infty e^{-n^2L} \left(A_6+A_7\log(\gamma_0+n)\right)
\\ & \le A_8 \log(\gamma_0+5) e^{4L\de^2+M(1-\beta_0)-9L} \le A_8  e^{\log \ga_0 +(1-\beta_0)\log x-8L} \le A_8 e^{\om-8L},\notag
\end{align}
referring to Lemma \ref{l:zeroesinrange} in the third line and then calculating similarly as we did above for the sum $\sum_{n=1}^{\infty} \log^2 (t_n+\gamma_0+5) e^{-mt_n^2}$ and in \eqref{Stranslatedintegral}.

For the contribution of the remaining zeroes with $|\Im \rho-\ga_0|\le 3$ in the full sum of residues, let us recall that by construction $(\beta_0,1]\times i[\gamma_0-3,\gamma_0+3]$ is zero-free. In other words, for any such remaining zero it holds $\Re(\rho-\rho_0)\le 0$, always. Using this we can apply a refined estimation for zeroes in a middle range distance from $\rho_0$ as follows.
\begin{align}\label{Zmiddlewayrootsum}
S_2(\rho_0)&:=\sum^{\star}_{\rho;~ 4\de \le |\Im \rho-\gamma_0|\le 3} \exp\left(L(\rho-\rho_0)^2+M(\rho-\rho_0)\right)
\\ \notag & \le \sum_{n=4}^{3/\de} e^{L\left((\beta_0-a)^2-n^2\de^2\right)} \left\{N(a,\gamma_0-(n+1)\de,\gamma_0-n\de)+ N(a,\gamma_0+n\de,\gamma_0+(n+1)\de\right\}
\\ \notag & \le e^{L(\beta_0-a)^2} \sum_{n=4}^{3/\de} e^{-n^2L\de^2} \left(A_6+A_7\log(\gamma_0+5)\right)
 \le A_8 \log\gamma_0 e^{(1-a)^2 L -16\de^2 L}= A_8 e^{\log\log \ga_0-12\de^2 L}.
\end{align}

Combining \eqref{Uotherway}, \eqref{Stranslatedintegral}, \eqref{Zfarrootsum} and \eqref{Zmiddlewayrootsum} and taking into account also \eqref{S-upperesti},
we are led to
\begin{equation}\label{beforeCassels}
|U_2|\ge \left|S_3 \right| - A_9 \left\{e^{\vp\om + 6\om m/\log x-m^2/4L}+e^{\om -0.04 \log^2 x/L}+e^{\log\log \ga_0-12\de^2 L} + e^{4L\de^2+\om-\de M}+ e^{\om-8L}\right\},
\end{equation}
where $S_3:=S_3(\rho_0)$ is defined as
\begin{align*}
S_3& :=\sum^{\star}_{\rho;~ |\Im \rho-\gamma_0|< 4\de} \exp\left(L(\rho-\rho_0)^2+M(\rho-\rho_0)\right).
\end{align*}
Let us denote the number of $\ze$-zeroes in the circle $D(r):=\{ s~:~|s-\rho_0|\le r\}$ as $\nu(r)$. Given that all zeroes in $S_3$ are to the right of $\Gamma$, therefore have $\Re \rho \ge a=1-2\de$, it is clear that $D(5\de)$ covers all the zeroes in the sum for $S_3$. If a parameter $r_0\ge \frac{15\log\log\log \gamma_0}{\log \log \gamma_0}$ is given, then according to the Turán-type Lemma \ref{th:locdens} we have\footnote{For the applicability of the Lemma recall $\de <0.01 (1-\theta)$ i.e. $5\de < 0.05 (1-\theta) \le 0.1(\beta_0-\theta)$ given that $\beta_0>(1+\theta)/2$.} the estimate $\nu(r) \le A_{10} r \log(\gamma_0+5)$ for any $r_0<r<5\de$.

Here we cut the sum $S_3$ into two parts, and estimate the part with zeroes not closer to $\rho_0$ than $r_0$ as follows.
\begin{align*}
R& :=\sum^{\star}_{\rho;~ r_0 \le |\rho-\gamma_0|< 5 \de} \left|\exp\left(L(\rho-\rho_0)^2+M(\rho-\rho_0)\right)\right|
\\ &\le \int_{r_0}^{5\de} \max_{\beta+it \in \partial D(r)} \exp\left(L((\be-\be_0)^2-(t-\gamma_0)^2)-M(\be_0-\be)\right) d\nu(r).
\end{align*}
Here the inner function is $\Phi(r):=\exp\left( \max_{0\le u \le r; ~u^2+v^2=r^2} \{L(u^2-v^2)-Mu \}\right)$ so that $\varphi(r):=\log \Phi(r)= \max_{0\le u \le r} \left( L(2u^2-r^2)-Mu \right)=-r^2L + \max_{0\le u \le r} \left( 2Lu^2-Mu \right)$. This last maximum is a maximum of a convex function, whence is attained at some of the endpoints: at 0 it attains  zero, at $r$ it is $2Lr^2-Mr$, and we get $\Phi(r)=\exp\left(-Lr^2+\max(0,2Lr^2-Mr)\right)$. Obviously, if we only assume that $L\le M$, then for $r\le 5\de <1/2$ we will always have $2Lr^2-Mr < 0$, whence the maximum is $0$ here. This yields
\begin{align}\label{eq:Rupper}\notag
R &\le \int_{r_0}^{5\de} \exp(-Lr^2) d\nu(r) = \left[\exp(-Lr^2) (\nu(r)-\nu(r_0)) \right]_{r_0}^{5\de} + \int_{r_0}^{5\de} 2Lr\exp(-Lr^2) (\nu(r)-\nu(r_0)) dr
\\& \ll \exp(-25L\de^2) \de \log(\gamma_0+5) + \int_{r_0}^{5\de} 2Lr \exp(-Lr^2) \log(\gamma_0+5)r dr \notag
\\& \ll \log\gamma_0 \left\{ \de e^{-25L\de^2} + \int_{r_0}^{\infty} Lr^2 \exp(-Lr^2) dr\right\}
=\log\gamma_0  \left\{ \de e^{-25L\de^2} + \int_{Lr_0^2}^{\infty} v e^{-v} \frac{dv}{2\sqrt{vL}}\right\}
\\& \notag
\le\log\gamma_0  \left\{ e^{-Lr_0^2} + \frac{1}{2Lr_0} \int_{Lr_0^2}^{\infty} v e^{-v} dv \right\} =e^{\log\log\gamma_0}  \left\{ 1+\frac{1+Lr_0^2}{2Lr_0} \right\} e^{-Lr_0^2}\ll e^{\log\log \ga_0 -Lr_0^2}.
\end{align}

There remains the part $P$ of $S_3$ with the remaining zeroes $|\rho-\rho_0|\le r_0$ (and to the right of $\Gamma$). Their number is $n \ll r_0\log(\gamma_0+5)$. Now, $P$ can be written as a sum of pure powers (i.e. without coefficients), where the general term takes the form
$$
\exp\left(L(\rho-\rho_0)^2+M(\rho-\rho_0)\right)=\exp\left(M \left(\ell (\rho-\rho_0)^2+(\rho-\rho_0)\right)\right)=e^{M\lambda(\rho)},
$$
with $\lambda(\rho):=\ell (\rho-\rho_0)^2+(\rho-\rho_0)$ and $\ell:=L/M$. In the following we will fix $\ell$ as a constant, i.e. take $L$ and $M$ to be constant multiples of each other, with $M$ varying in $[\log x -2m, \log x -m]$, but $\ell$ fixed. Out of terms of $P$ there is one, belonging to $\rho_0$, which must be exactly 1. Therefore, Turán's Second Main Theorem of the Power Sum Theory, Lemma \ref{l:SecondMain} above, gives that in the interval given for $M$ there is a choice of the value of this parameter which furnishes
\begin{equation}\label{eq.Ppowersum}
|P|\ge \exp\left( -\log\left(\frac{8e\log x}{m}\right)\cdot A_{11} r_0 \log(\gamma_0+5) \right)
\end{equation}
and consequently in view of \eqref{eq:Rupper}
\begin{equation}\label{eq:S3below}
|S_3|\ge \exp\left( -\log\left(\frac{8e\log x}{m}\right)\cdot A_{11} r_0 \log(\gamma_0+5) \right) - A_{12} \exp(\log\log \ga_0-r_0^2 L).
\end{equation}

\section{Choice of parameters and proof of Theorem \ref{th:Beurlingdomainosci}}\label{sec:parametersandproof}

As a key step towards the oscillation result on $\D(x)$, we first deduce the following intermediate result.

\begin{theorem}\label{th:goodlocalization} Let $\ve >0$ be arbitrary. Assume Axiom A and $\theta_0=1$. If $x>x_0(\G,\ve)$ and $\rho_1$ is the extremal $\zeta$-zero for $x$ (i.e. $\om(\rho_1; x)=\om(x)$), then there exists some $\xi \in [x^{1-\ve},x]$ such that $|\D(\xi)|\ge \xi^{\be_1}/|\rho_1|^{1+\ve}$.
\end{theorem}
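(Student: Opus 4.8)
The plan is to run the two-sided machinery of Sections~\ref{sec:upperesti} and~\ref{sec:loweresti} with a carefully tuned choice of the free parameters, and then to transfer the conclusion from the auxiliary zero $\rho_0$ back to the given extremal zero $\rho_1$. Recall that at the start of Section~\ref{sec:upperesti} a \emph{locally borderline} zero $\rho_0=\beta_0+i\gamma_0$ is built from $\rho_1$, satisfying $\beta_1\le\beta_0$, $\gamma_1\le\gamma_0<2\gamma_1$, $(\beta_0,1]\times i[\gamma_0-3,\gamma_0+3]$ zero-free, and $\om(x)\le\om:=\om(\rho_0;x)<\om(x)+1$. Both the lower bound \eqref{beforeCassels}--\eqref{eq:S3below} and the upper bound \eqref{eq:U2upperDelta} concern $|U_2(\rho_0)|$, so combining them will pin down a point $\xi$ at which $|\Delta|$ is large.

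\textbf{Choice of parameters.} I would first assume $\ve$ small, say $\ve<0.01(1-\theta)$ (for larger $\ve$ the statement only weakens). Put $\vp:=\min\{\ve/3,\ 0.02,\ \sqrt{(1-\theta)/400}\}$, fix $\ell:=0.1$, take $r_0$ a sufficiently small positive constant (depending only on $\ve$ and $\G$) so that $A_{11}r_0\log(8e/\vp)\le\ve/4$ and $r_0<0.045(1-\theta)$, and set $\de:=\min\{r_0,\ 0.009(1-\theta)\}$; these respect the standing requirements $\ell<0.2$, $\vp<\sqrt{(1-\theta)/300}$, $\vp<0.025$, $\de<0.01(1-\theta)$, $r_0<5\de$. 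The decisive choice is to take $m$ at the \emph{top} of its permitted range, $m:=\vp\log x$ (which is $\ge\vp\om$ once $x$ is large, since $\om<\om(x)+1=o(\log x)$), to let $M\in[\log x-2m,\log x-m]$ be the value furnished by Lemma~\ref{l:SecondMain} in \eqref{eq.Ppowersum}, and to set $L:=\ell M$. Then $\log(8e\log x/m)=\log(8e/\vp)$ is a constant and $L\sim\ell\log x$, so for $x>x_0(\G,\ve)$ the constraints $m>4\sqrt L$, $m>24L\om/\log x$, $m\ll|\rho_0|L$, $\log x\ge5L\ge20\sqrt L$, $L\ge16$, $M\ge0.95\log x$ all hold; moreover $\gamma_0\ge\gamma_1\to\infty$ (extremal zeroes tend to $1+i\infty$ by Lemma~\ref{l:omega}), so the hypotheses that $\gamma_0$ is large, e.g. $\log^2(\gamma_0+5)<\de^5\gamma_0$, used throughout Section~\ref{sec:loweresti}, are met as well.

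\textbf{Collapsing the estimates.} Inserting these values into \eqref{eq.Ppowersum} gives $|P|\ge\exp(-A_{11}r_0\log(8e/\vp)\log(\gamma_0+5))\ge(\gamma_0+5)^{-\ve/4}\ge\gamma_0^{-\ve/3}$ for $x$ large. Every other term in \eqref{eq:Rupper}, \eqref{eq:S3below} and \eqref{beforeCassels}, once $m=\vp\log x$, $M\sim\log x$, $L\sim\ell\log x$ and the fixed $\de,r_0$ are put in, carries an exponent of the form $O(\om)-c\log x$ with $c=c(\ell,\de,\vp)>0$ --- for instance $m^2/4L\gg\vp^2\log x$, $0.04\log^2x/L\gg\log x$, $12\de^2L\gg\de^2\log x$, $\de M\gg\de\log x$, $8L\gg\log x$, $r_0^2L\gg r_0^2\log x$ --- so, $\om$ being $o(\log x)$, each such term is $\le x^{-c'}$ for some $c'>0$ and $x$ large. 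Since $\log\gamma_0\le\om=o(\log x)$ we have $\gamma_0^{-\ve/3}\gg x^{-c'}$, so \eqref{beforeCassels} together with \eqref{eq:S3below} yields $|U_2(\rho_0)|\ge\tfrac12\gamma_0^{-\ve/3}$ for $x>x_0(\G,\ve)$. Coupling this with \eqref{eq:U2upperDelta} produces a point $\xi=e^v\in[q,x]$ with $|\Delta(\xi)|\gg_{\G,\ve}\gamma_0^{-\ve/3}\,\xi^{\beta_0}/|\rho_0|$; and since $q=e^{M-m}\ge xe^{-3m}=x^{1-3\vp}\ge x^{1-\ve}$, this $\xi$ already lies in $[x^{1-\ve},x]$.

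\textbf{Passing to $\rho_1$; the main obstacle.} Finally, using $\beta_1\le\beta_0$ (so $\xi^{\beta_0}\ge\xi^{\beta_1}$ for $\xi\ge1$), $|\rho_0|\le2\gamma_0<4\gamma_1\le4|\rho_1|$ and $\gamma_0<2\gamma_1\le2|\rho_1|$, the bound above rewrites as $|\Delta(\xi)|\gg_{\G,\ve}\xi^{\beta_1}/|\rho_1|^{1+\ve/3}$; then, because $\theta_0=1$ forces $|\rho_1|=|\rho_1(x)|\to\infty$ (Lemma~\ref{l:omega}), the factorization $|\rho_1|^{-1-\ve/3}=|\rho_1|^{2\ve/3}\,|\rho_1|^{-1-\ve}$ absorbs the implied constant for $x$ large, giving $|\Delta(\xi)|\ge\xi^{\beta_1}/|\rho_1|^{1+\ve}$, as required. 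I expect the only genuine difficulty to be exactly this bookkeeping: that taking $m$ as large as allowed keeps the power-sum exponent $\log(8e\log x/m)$ bounded --- so that the Tur\'an lower bound for $|P|$ costs merely a factor $\gamma_0^{-O(\ve)}$ --- while still honouring all the lower bounds on $m$, and that with $\ell,\de,r_0$ held fixed every competing error from Sections~\ref{sec:upperesti}--\ref{sec:loweresti} really does decay like a fixed negative power of $x$, which dominates $\gamma_0^{-O(\ve)}$ precisely because $\theta_0=1$ makes $\log\gamma_0=o(\log x)$.
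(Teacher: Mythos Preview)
Your proof is correct and follows essentially the same route as the paper: run the two-sided estimates of Sections~\ref{sec:upperesti}--\ref{sec:loweresti} for the locally borderline zero $\rho_0$, take $m=\vp\log x$ so that $\log(8e\log x/m)$ is a bounded constant and the power-sum lower bound for $|P|$ loses only a factor $\gamma_0^{-O(\ve)}$, observe that with $\ell,\de,r_0$ fixed every competing error term has exponent $O(\om)-c\log x\to-\infty$ because $\om=o(\log x)$, and then pass back to $\rho_1$ via $\beta_0\ge\beta_1$, $|\rho_0|\asymp|\rho_1|$. The paper makes the same choices up to inessential numerics (it takes $\ell=1/8$, $r_0=3\vp^2$, and fixes $\de$ first with $\vp<\de^2$, whereas you fix $\vp$ first and then $\de\le r_0$), and handles the reduction to $\rho_0$ at the start rather than the end; the substance is identical.
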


\begin{remark} One would think that the result should hold for \emph{any} zero, once it holds for the extremal one. However, $\rho_1$ is extremal \emph{only for} $x$, while it is not clear what would be the right comparison with the extremal zero \emph{right for} $\xi$. Below in our main result we establish such a comparison--at the expense of a loss of $e^{\ve\om(\xi)}$, slightly exceeding the mere $|\rho|^\ve$ loss here. That is quite satisfactory, yet we should admit that the plausible variant here for arbitrary zeroes we could not prove.
\end{remark}

\begin{proof} As $\theta_0=1$, $\Re \rho=\theta_0$ is not attained by any zero. Therefore, as $x\to \infty$, the extremal zeroes in $\ot \approx \om$ satisfy "$\rho \to 1+ i\infty$", as is discussed in Lemma \ref{l:omega}. Thus we can assume $\ga_1$ to be sufficiently large whenever $\ga_1=\Im \rho_1$ for an $x$-extremal $\rho_1$ with large enough $x$.

Recall from the above argument that we replaced the extremal zero $\rho_1$ by some possibly different $\rho_0$ with $\gamma_1\le \gamma_0\le (1+o(1))\gamma_1$ and $\beta_0\ge \beta_1$, and such that $\om(\rho_0; x)\le \om(x)+1$. Obviously, it suffices then to prove the assertion for this possibly changed zero $ß\rho_0$, for $\be_0\ge \be_1$ and $|\rho_0|=|\rho_1|(1+o(1))$ entails
$\frac{\xi^{\be_0}}{|\rho_0|^{1+\ve}}\ge  \frac{\xi^{\be_1}}{[(1+o(1))|\rho_1|]^{1+\ve}} \ge \frac{\xi^{\be_1}}{|\rho_1|^{1+2\ve}}$, say.

We apply the results of the calculations of the above Sections \ref{sec:upperesti} and \ref{sec:loweresti} with $\de$ chosen in the beginning of Section \ref{sec:upper} and with the further specifications of our parameters
\begin{equation}\label{eq:parameters}
\vp<\de^2, \quad \ell:=\frac{1}{8} ~(\textrm{i.e.}~ L:=\frac{1}{8} M),\quad m:=\vp \log x, \quad r_0:=3\vp^2.
\end{equation}
Given that $\gamma_0\ge\gamma_1$, we certainly know that $\ga_0\to \infty$, thus for large enough $x$ the above choice of $r_0=3\vp^2$ exceeds $15\log\log\log \ga_0/\log \log \ga_0$, whence is admissible. Similarly, as $\om\in [\om(x),\om(x)+1]$ and $\om(x)/\log x\to (1-\theta_0)=0$, we certainly have $\ga_0\le \om =o(\log x)$. So, for large enough $x$ we also have $\log \ga_0 \le \om \le \frac{1}{3} \vp^3 \log x$. Further, from $\ga_0\to \infty$ it also follows that for large enough $\gamma_0$--that is, for large enough $x$--we necessarily have $\log \log \gamma_0 <\vp \log \ga_0$, and in particular even $r_0^2L \ge \vp^4 \log x \ge \vp \log \ga_0>\log\log\ga_0 >1$ and also $\log \gamma_0> \frac{1}{\vp} \log\log\gamma_0>1/\de^2 >10 \log(1/\de)$

With the above choice of parameters the conditions that  {$m\le |\rho_0| L$}, that {$m>4\sqrt{L}$, $m>24L\om/\log x$, $M\ge 0.95 \log x$} and that  {$\log x \ge 20 \sqrt{L} $} are obviously met, thus from
\eqref{beforeCassels} we are led to
\begin{align}\label{eq:Uwithparameters}
\notag |U_2| & \ge |S_3|-A_9 \left\{e^{7\vp\om -\vp^2 \log^2 x/4L}+e^{\om -0.04 \log^2 x/(\frac18 \log x)}+e^{\vp \log \ga_0-12\vp L} + e^{\om+4\de^2 L -\de M}+ e^{-7L}\right\}
\\& \notag \ge |S_3|- A_9 \left\{e^{7\vp\om -2\vp^2 \log x}+e^{\om -0.3 \log x}+e^{-11\vp L} + e^{\om  - \de M/2}+ e^{-7L}\right\}
\\& \ge |S_3|- O\left(e^{-2\vp\om}+e^{-\om}+e^{-11\vp \om} + e^{\om-\vp \log x}+ e^{-7\om}\right)
=|S_3|-O(e^{-2\vp \om})=|S_3|- O(\ga_0^{-2\vp}).
\end{align}
Further, substituting our parameter choices into \eqref{eq:S3below} yields for $\vp$ sufficiently small in function of $A_{11}$
\begin{align}\label{eq:S3withparameters}
|S_3| &\ge \exp\left( -\log\left(\frac{8e}{\vp}\right)\cdot A_{11} 3\vp^2 2 \log \gamma_0 \right) - A_{12} \exp(\log\log \ga_0-9\vp^4 L)
\\ &\ge e^{-\vp\log \gamma_0} - A_{12}e^{\vp \log\ga_0-\vp^4 \log x }\ge e^{-\vp\log \gamma_0} - A_{12}e^{\vp \log\ga_0-3\vp \log \ga_0 }\ge \ga_0^{-\vp} -O(\ga_0^{-2\vp}).\notag
\end{align}
Collecting \eqref{eq:U2upperDelta}, \eqref{eq:Uwithparameters} and \eqref{eq:S3withparameters} finally leads to
$$
\max_{\xi \in [q,x]} \frac{|\D(\xi)|}{\xi^{\beta_0}/|\rho_0|} \gg \ga_0^{-\vp}\left(1-O(\ga_0^{-\vp})\right),
$$
proving the assertion if $\vp$ is chosen sufficiently smaller than $\ve$.
\end{proof}

Now we are in the position to infer the second main result of the present study.

\begin{theorem}\label{th:Deltaomega} Assume Axiom A and $\theta_0=1$.

If $0<\ve<0.01(1-\theta)$ is arbitrary and if $x>x_0(\G,\ve)$, then there exists some $\widetilde{x} \in [x^{1-\ve},x]$ such that $|\D(\widetilde{x})|\ge \widetilde{x} \exp(-(1+\ve)\om(\widetilde{x}))$.
\end{theorem}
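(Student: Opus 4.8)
The plan is to derive Theorem~\ref{th:Deltaomega} from Theorem~\ref{th:goodlocalization} by a short ``$\om$-comparison'' argument, turning the localized bound phrased through the zero $\rho_1$ extremal \emph{for $x$} into the self-referential bound phrased through $\om(\widetilde x)$. First I would fix an auxiliary parameter $\vp>0$, small in terms of $\ve$ (it will suffice to take $\vp\le\ve/3$ together with $\vp<0.01(1-\theta)$), and apply Theorem~\ref{th:goodlocalization} with $\vp$ playing the role of its $\ve$: for $x>x_0(\G,\vp)$ this yields a point $\xi\in[x^{1-\vp},x]\subset[x^{1-\ve},x]$ together with the extremal zero $\rho_1=\be_1+i\ga_1$ for $x$ (so $\om(\rho_1;x)=\om(x)$) such that
\[
|\D(\xi)|\ \ge\ \frac{\xi^{\be_1}}{|\rho_1|^{1+\vp}}\ =\ \xi\exp\!\bigl(-(1-\be_1)\log\xi-(1+\vp)\log|\rho_1|\bigr)\ =\ \xi\exp\!\bigl(-\om(\rho_1;\xi)-\vp\log|\rho_1|\bigr),
\]
where $\om(\rho_1;\xi):=(1-\be_1)\log\xi+\log|\rho_1|$ in the notation of \eqref{eq:Womegadef}. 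Setting $\widetilde x:=\xi\in[x^{1-\ve},x]$, it then remains only to prove the inequality $\om(\rho_1;\xi)+\vp\log|\rho_1|\le(1+\ve)\om(\xi)$.

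Two elementary observations dispose of the ``$\rho_1$-terms''. Since $1-\be_1\ge0$ and $\xi\le x$, the map $t\mapsto\om(\rho_1;t)$ is nondecreasing, so $\om(\rho_1;\xi)\le\om(\rho_1;x)=\om(x)$; and from $\om(x)=(1-\be_1)\log x+\log|\rho_1|\ge\log|\rho_1|$ we also get $\log|\rho_1|\le\om(x)$. Hence $\om(\rho_1;\xi)+\vp\log|\rho_1|\le(1+\vp)\om(x)$. The key remaining point is a multiplicative comparison of $\om$ between the scales $\xi$ and $x$: since $\theta_0=1$, the value $\theta_0$ is not attained by any zero, so the final assertion \eqref{omegayz} of Lemma~\ref{l:omega} applies and gives, for $x$ large enough (so that $\xi\ge x^{1-\vp}$ exceeds the relevant threshold) and $\xi<x$,
\[
\om(\xi)\ <\ \om(x)\ <\ \om(\xi)\,\frac{\log x}{\log\xi}\ \le\ \frac{\om(\xi)}{1-\vp},
\]
using $\log\xi\ge(1-\vp)\log x$; the case $\xi=x$ being trivial. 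Combining the two displays,
\[
\om(\rho_1;\xi)+\vp\log|\rho_1|\ \le\ (1+\vp)\,\om(x)\ <\ \frac{1+\vp}{1-\vp}\,\om(\xi)\ \le\ (1+\ve)\,\om(\xi),
\]
the last step because $\vp\le\ve/3$ forces $\tfrac{1+\vp}{1-\vp}=1+\tfrac{2\vp}{1-\vp}\le1+\ve$ for $\ve$ small. Plugging back into the first display gives $|\D(\widetilde x)|\ge\widetilde x\exp(-(1+\ve)\om(\widetilde x))$, which is the assertion.

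I do not expect a genuine obstacle at this stage: the analytic substance — the contour shift of Section~\ref{sec:loweresti}, the Turán power-sum estimate, and the good localization in $[x^{1-\ve},x]$ — is entirely absorbed into Theorem~\ref{th:goodlocalization}. The only load-bearing idea here is that the zero $\rho_1$, extremal merely for $x$, stays \emph{nearly} extremal for every $\xi\in[x^{1-\vp},x]$, which is precisely what the slowly-varying/multiplicative estimate \eqref{omegayz} delivers, and which is the source of the extra $e^{\vp\om(\xi)}$-type loss flagged in the remark after Theorem~\ref{th:goodlocalization} (slightly larger than the bare $|\rho|^{\vp}$ loss available there). The rest is cosmetic bookkeeping: merging the smallness conditions $\vp\le\ve/3$, $\vp<0.01(1-\theta)$ and the two largeness thresholds ($x>x_0(\G,\vp)$ from Theorem~\ref{th:goodlocalization} and the one coming from \eqref{omegayz}) into a single $x_0(\G,\ve)$.
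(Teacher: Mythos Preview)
Your proposal is correct and follows essentially the same approach as the paper: both apply Theorem~\ref{th:goodlocalization} to get a point $\widetilde{x}\in[x^{1-\ve},x]$ with $|\D(\widetilde{x})|\ge\widetilde{x}^{\be_1}/|\rho_1|^{1+\ve}$, then rewrite this as $\widetilde{x}\exp(-(1+\ve)\om(x))$ and use the multiplicative comparison \eqref{omegayz} of Lemma~\ref{l:omega} to pass from $\om(x)$ to $\om(\widetilde{x})$. The only cosmetic difference is that you introduce an explicit auxiliary $\vp\le\ve/3$ to arrive at the stated exponent $(1+\ve)$ directly, whereas the paper works with a single $\ve$, obtains $\frac{1+\ve}{1-\ve}$ in the exponent, and closes with ``obviously sufficient as $\ve$ was arbitrary''.
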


\begin{proof}
We apply the above Theorem \ref{th:goodlocalization}: there is an $\widetilde{x} \in [x^{1-\ve},x]$ such that
\begin{equation}\label{eq:LastThFirstStep}
|\D(\widetilde{x})| \ge \frac{\widetilde{x}^{\be_1}}{|\rho_1|^{1+\ve}} = \frac{\widetilde{x}}{\widetilde{x}^{1-\be_1} |\rho_1|^{1+\ve}} \ge \frac{\widetilde{x}}{(\widetilde{x}^{1-\be_1} |\rho_1|)^{1+\ve}}
\ge \frac{\widetilde{x}}{(x^{1-\be_1} |\rho_1|)^{1+\ve}}=\frac{\widetilde{x}}{\exp((1+\ve)\om(x))}.
\end{equation}
Next we want to compare $\om(x)$ and $\om(\widetilde{x})$ in the exponent. That is, we write
\begin{equation}\label{eq:LastThSecondStep}
|\D(\widetilde{x})| \ge \frac{\widetilde{x} \exp(-(1+\ve)\om(\widetilde{x}))}{\exp((1+\ve)(\om(x)-\om(\widetilde{x})))}.
\end{equation}
We can write in the denominator according to \eqref{omegayz} of Lemma \ref{l:omega} that $\om(x)-\om(\widetilde{x})\le \om(\widetilde{x}) \frac{\log x - \log \widetilde{x}}{\log \widetilde{x}} \le \frac{\ve}{1-\ve} \om(\widetilde{x})$, and that gives $|\D(\widetilde{x})| \ge \widetilde{x} \exp(-\frac{1+\ve}{1-\ve}\om(\widetilde{x}))$, which is obviously sufficient as $\ve$ was arbitrary.
\end{proof}

Contrary to the upper estimations in the direct direction of Ingham type theorems, here in the converse direction the analogous results with the Ingham function $\eta(t)$ are not so immediate. The reason is that we have seen that if $\eta$ is to bound a domain $D(\eta)$ free of zeroes, then $\om(x)\ge\om_\eta(x)$. However, when $\eta$ is a curve with $D(\eta)$ \emph{containing an infinitude of zeroes}, then in general \emph{no global comparison} of the two functions $\om$ and $\om_\eta$ is available. Indeed, it is possible to give examples of functions with $\om(x)>\om_\eta(x)$ and also $\om(x)<\om_\eta(x)$ occurring at many places. Therefore, what we will do in the below proof is a pointwise comparison, to our favor, i.e. in the direction of $\om_\eta(x)\ge \om(x)$, \emph{only along a sequence}, defined geometrically and indirectly by the outstanding sequence of zeroes $\rho_k$. The construction is non-trivial, but is necessary, if we indeed want to show that the relatively recent versions of Pintz' Theorem using the function $\om(x)$ directly defined in terms of the zeroes rather than Ingham's $\om_\eta(x)$, are indeed sharper. As said, this is clear for the direct direction but is surprisingly less obvious, requiring also some convex geometry for the converse direction.

\begin{proof}[Proof of Theorem \ref{th:Beurlingdomainosci}] So let us assume now that $\rho_k$ is a sequence of zeroes in the domain \eqref{eq:etazerofree}. We assume that $\eta(t)$ is convex in logarithmic variables, and also that $\theta_0=1$, all other cases being similar. So we assume $\eta(e^v)$ being convex and consider the domain $\Ds:=\{(u,v)\in\RR^2~:~ u\ge 1-\eta(e^v)\}$, which is the image of $D(\eta)$ under the canonical mapping\footnote{In the definition of $\eta$ we restricted to $ßt\ge 1$. That corresponds to set the domain of $\Phi$ as $\Im s\ge 1$, fitting to taking $\log t$ and considering the upper halfplane here; also this fits to the restriction applied in the definition of $\ot(x)$.} $\Phi: \CC \to \RR^2$ defined by $\Phi(\si+it)\to (\si,\log t)$. In fact, all our considerations will take place in the upper halfplane $\HH:=\RR\times[0,\infty)$, as $\Phi$ maps the halfplane $S:=\{s=\si+it~:~ t\ge 1\}$ to $\HH$. We define the boundary curve $\C:=\Cn\cup \Co$, where $\Cn:=(-\infty,1-\eta(1)]\times\{0\}$, a halfline on the $u$-axis of $\RR^2$, and $\Co:=\{(u,v)\in \HH~:~ u=1-\eta(e^v)\}$. The latter is the boundary curve for $\Ds$ at least under the convention that we decide to consider $D(\eta)$ as only in its part belonging to $S$. Similarly, we assume that $\eta$ satisfies $\eta(1)<1-\theta$, (a condition automatically satisfied in the classical case when assuming $\eta\le 1/2$).

It will be more convenient to describe $\Co$ by means of the inverse function\footnote{It exists, as $\eta$ tending to $0$ while $\eta(e^v)$ being convex entail that $\eta$ has to be strictly decreasing.} $\ff:=f^{-1}$, where $f(v):=1-\eta(e^v)$. Then $f: [0,\infty)\to [a,1)\subset (\theta,1)$ with $a:=1-\eta(1)>\theta$, whence $\ff:[a,1)\to [0,\infty)$. Note also that by assumption $\eta(e^v)$ is convex, whence $f$ is concave, and $\ff$ is again convex. Now with $\ff$ we can write $\Co:=\{ (u,v)~:~ v=\ff(u)\}$.

Consider the sequence $\rho_k^*:=\Phi(\rho_k):=(u_k,v_k):=(\beta_k, \log\gamma_k) \in \HH$. We
define the convex hull
$$
K:=\con\left((\HH\setminus \Ds)\cup (\cup_{k=1}^\infty \{\rho_k^*\})\right)=\con \left( \C \cup (\cup_{k=1}^\infty \{\rho_k^*\})\right).
$$
Here in making the convex hull, points which are not extreme points can be dropped without changing the set $K$. It is important that deleting these non-extreme points, there still remains an infinite sequence of zeroes $\rks$. We will now prove that if there were only a finite number of extreme points among the $\rho_k^*$ , then there had been only a finite number of points of the sequence, too.

Indeed, if there are only finitely many extreme points among the $\rks$, then they belong to a certain rectangle $R:=[a,U]\times [0,V]$, say, with $U<1$, because $\Re s=1$ is zero-free. Then it is easy to see that all the supporting lines to $\Co$ emanating from points of $R$ are at most as steep as the one emanating from $(U,0)$. Let this last supporting line touch $\Co$ at $\ww:=(w,\ff(w))$ and have slope $\mu$, say. In other words the line $v=\mu(u-U)$ is a supporting line to $\Co$, supporting $\Co$ at $\ww$.

Now it is easy to see that the whole rectangle $R$ is to the left of this line (because it emanates from the point $(U,0)$ with the rightmost coordinate $U$ of $R$, and its slope is positive, given that it must pass from the right all points of $\Co$, those with abscissa arbitrarily close to 1 included), therefore this line is supporting on its left both $R$ and $\Co$, whence also the whole of $K$. Normally (at points of differentiability) $\mu$ is just the derivative $\ff'(w)$, but this is not needed. What is needed is that at any point $\zz:=(z,\ff(z))$ with $z>w$ all supporting lines of $\Co$ through $\zz$ have at least as large a slope $\mu(z)$ than $\mu$. Any such supporting line $v=\mu(z)(u-z)+\ff(z)$ has $\ww$ on its left (because it supports $\Co$), and is steeper (not less steep) than $\mu$, whence it also has all the rectangle $R$ on its left. Therefore, it is a supporting line to $K$, too.

Let now $(p,q)\in\HH$ be any point with $q>\ff(p)$ and $p>w$. According to the above, the supporting line to $\Co$ at $(p,\ff(p))$ is a supporting line to $K$, too. However, $(p,q)$ is to the right of this line (as $q>\ff(p)$), therefore it is separated from $K$ by this line and therefore it cannot belong to $K$. In particular, it cannot be any element of the sequence $\rks$ (as all of these belong to $K$). That is, we do not have any point $\rks$ in the domain $\Ds$ with $p>w$. It follows that only points $(p,q)$ with $a\le p \le w$ can be equal to some $\rks$, and therefore the sequence $\rks$ itself had to be finite\footnote{The set is part of $\Z(a;1,e^w)$, whose cardinality is $N(a;1,e^w)$.}.

So we now see that there has to be an infinite (sub)sequence of the $\rks$ which are also extreme points\footnote{Note on passing that here we essentially used convexity of $\eta(e^v)$, equivalent to convexity of $\ff$, for otherwise it is easy to construct examples with an infinite sequence $\rks \in \Ds$ but none of them being extreme points of $K$.} for $K$: we delete others and keep only these extreme points.


Now if $\rho_k^*$ is an extreme point of $K$ then there exists a supporting line through it for the convex set $K$, i.e. a line $L_k$ with defining equation $v=a_ku+b_k$, which
passes through the point $\rks:=u_k+iv_k$, while it goes fully outside the interior of the convex hull $K$.

Take the straight line $\ell$ passing through the point $(1,0)$ and of slope $\xi$, (a positive parameter to be chosen later on) in the $(u,v)$-plane $\RR^2$. Its equation is $v=\xi(u-1)$, or $\xi u-v-\xi=0$. In the $(u,v)$-plane, the line $\ell$ is to the right\footnote{Indeed, its part in $\HH$ lies in the quadrant $u\ge 1, v\ge 0$, while $K$ lies in the open quadrant $u<1, v\ge 0$.} of the above defined convex domain $K$. Substituting the coordinates of any point in the normal form $\frac{1}{\sqrt{1+\xi^2}} (\xi u-v-\xi)=0$ of $\ell$ furnishes the signed distance of that point from $\ell$, with points to the left from $\ell$ bearing negative, and points to the right having positive signs. This signing being some inconvenience for us, we change the orientation: we write instead $\ell(u,v):=\frac{1}{\sqrt{1+\xi^2}} (-\xi u+v+\xi)$ for the formula of the line, meaning that now $\ell(u,v)=0$ describes points of $\ell$, and $\ell(u,v)$ for a general point gives the negatively signed distance of the point from $\ell$ i.e. positive signs to the left, and negative signs to the right of $\ell$.

Now minimizing over all points of $K$ corresponds to finding the distance of $K$ from $\ell$. Further, if $(u_0,v_0)\in \partial K$ is one such minimal distance point, then the line $L$ passing through $(u_0,v_0)$ and parallel to $\ell$ is just a supporting line to $K$.

That means in particular that in case we set $\xi:=a_k$, then the prescribed slope matches the slope of the supporting line drawn to $\rho_k^*$, whence the above constructed line $L$ will match $L_k$. As said, it is a supporting line to $K$ through $\rho_k^*$, which also means that all points of $K$, in particular all points on the curve $\Co$, will have to lie to the left from it. The distance to the left from $\ell$ is now signed positively, and $\rks$ provides the least such distance as compared to all points of $K$; in particular it provides a smaller (not larger) value than minimizing over all points of $\Co$, and also it provides the minimal value among all points $\rho_n^*$. So in particular the distance $\ell(u_k,v_k)$ is minimal among all $(u_n,v_n)$: equivalently,
$(1-u_k)a_k+v_k =\min_{n\in \NN} (1-u_n)a_k+v_n$. We therefore find that the slope $a_k$ of the supporting line through the extreme point $\rks$ defines a value $x_k:=e^{a_k}$ such that $\om(\rho_k; x_k)=(1-\beta_k)\log x_k + \log \gamma_k=(1-u_k)a_k + v_k =\min_{n\in \NN} \om(\rho_n; x_k) =\om (x_k)$.

Further, we can read the meaning of the other distance minimization: for any point $(u,v)\in \Co$, we have $\frac{1}{\sqrt{1+\xi^2}} \om(\rho_k; x_k)= \ell(\rks)\le \ell(u,v)=\frac{1}{\sqrt{1+\xi^2}}\left( (1-u)a_k+v\right)$. Multiplying by $\sqrt{1+\xi^2}$ and taking minimum on the right hand side furnishes $\om(x_k)=\om(\rho_k,x_k) \le \min_{(u,v)\in\Co} (1-u)a_k+v = \min_{t\ge 1} (1-\eta(t))\log x_k+\log t =\om_\eta(x_k)$.

So we get that setting $x_k:=e^\xi=e^{a_k}$, it holds $\om_\eta(x_k)\ge \om(\rho_k; x)=\om(x_k)$. This is not a general equation, it is conditional to the property that $\rks$ is an extreme point with the supporting line through it having slope $\xi:=\log x_k=a_k$, but it holds for arbitrary values of $x=x_k$ to which there is a supporting line through a point $\rks$. As said, we already know that there is an infinitude of $\rks$ being extreme points of the convex hull $K$, therefore there is an infinite sequence of $\xi_k=\log x_k$ values for which the lines with slopes $\xi_k$ and passing through $\rks$ provide supporting lines to $K$. With that sequence of $x_k$ we therefore have that $\om(x_k)=\om(\rho_k,x_k) \le \om_\eta(x_k)$. It is clear that these slopes are tending to infinity, therefore also $x_k=e^{\xi_k} \to \infty$.

Here we can apply Theorem \ref{th:Deltaomega} (with $x:=x_k$), more precisely its proof, where in the first step we have got \eqref{eq:LastThFirstStep}. We obtain values
$\widetilde{x_k} \in [x_k^{1-\ve},x_k]$, for which $|\D(\widetilde{x_k})|\ge \widetilde{x_k} \exp(-(1+\ve)\om(x_k))\ge \widetilde{x_k} \exp(-(1+\ve)\om_\eta(x_k))$, the last step being clarified above.

Note that $\om_\eta$ satisfies similar properties as $\om$--see Lemma \ref{l:omegaeta}. That means that we can argue as above in the end of the proof of Theorem \ref{th:Deltaomega}, finally deriving  $|\D(\widetilde{x_k})|\ge \exp(-\frac{1+\ve}{1-\ve}\om_\eta(\widetilde{x_k}))$.
\end{proof}

\section{Concluding remarks}\label{sec:Conclusion}

In the paper and in the previous parts of the series we obtained several results on the order of magnitude of the error function $\D(x)$ in the Beurling PNT under Axiom A
In particular, we emphasized the role of the Riemann-von Mangoldt formula, which seems to suggest reliable information on the order of magnitude of the error function even if it is a divergent, hardly manageable series, whose "interference difficulty" was truthfully pointed out by Littlewood. Even if this series is divergent--so that in concrete analysis only truncated versions, i.e. partial sums can be used--and even if it is not a regular say Taylor series, its characteristics seem to have many resemblance to well-behaved entire functions of say finite order, such as its order of growth being comparable to the size of its largest term, or to the total sum of the absolute values of terms. Therefore, in our opinion, reasonable conjectures can be made by analogy to power series, and the challenge lies in extracting the conjectured information from the otherwise complicated series and indeed prove what seems to be suggested by it.


We need to mention another direction here, which seems to originate from a paper by Knapowsky \cite{Knapowski} in the classical case. Namely, from assuming a known $\zeta$-zero, he obtained a lower estimate for the mean value $D(x)$, too. Also this was improved to very sharp, essentially optimal forms by Pintz \cite{PintzBasel} \cite{PintzNoordwijkerhout} \cite{PintzProcStekl}, made suitable also for handling dependence on zero-free regions (and not only on one pre-set zero) similar to our topic here. That enabled Pintz to give an essentially full picture of the essentially equivalent order of magnitude of the functions $\D(x)$, its average $ßD(x)$, its maximum $S(x)$, and the zero-distribution dependent functions $W(x)$ and $Z(x)$, see \cite{PintzProcStekl}.

However, we cannot fully extend Pintz' result on the lower estimation of the average $D(x)$ to the Beurling case. The main reason is that Pintz used Vinogradov estimates heavily, while we found surpassing that part of the argument very hard, essentially not working.  Nevertheless, already here we employed certain finer calculus and estimates at some steps, than were merely necessary for the proofs of our statements right here, in the good hope that these finer estimates will be of good use in further investigations--in particular concerning $D(x)$.

\section{Acknowledgement}

We would like to thank János Pintz, Frederik Broucke, Gregory Debruyne and the anonymous referee for many useful comments and advices, which were benefitted throughout our work and its subsequent revision. Also we would like to thank Ghent University for the hospitality and the motivating professional environment, which was greatly stimulating during our visit in May 2022. Also the financial support of the BOF grant, project no. 01J04017 is thankfully acknowledged.
During the fifteen years of writing this paper the author was supported by the Hungarian Research, Development and Innovation Office, Grant \# T-72731, T-049301, K-61908, K-119528 and K-132097.

\renewcommand\refname{\centerline{Literature}}

\end{document}